\documentclass[12pt,twoside,a4paper]{article}
\usepackage{amssymb,amsmath,amsthm, amscd, mathrsfs,enumerate}

\usepackage{color}

\def\red{\color{red}}

\newtheorem{theorem}{Theorem}[section]
\newtheorem{thm}[theorem]{Theorem}

\newtheorem{lemme}[theorem]{Lemma}

\newtheorem{coro}[theorem]{Corollary}

\newtheorem{definition}[theorem]{Definition}

\theoremstyle{plain}
\newtheorem*{namedthm}{\namedthmname}
\newcounter{namedthm}



\newcommand{\R}{\mathbb{R}}
\newcommand{\C}{\mathbb{C}}
\newcommand{\N}{\mathbb{N}}

\numberwithin{equation}{section}

\usepackage{srcltx,mathrsfs}
\usepackage{url}
\usepackage{authblk}
\usepackage{hyperref}
\hypersetup{
	bookmarks=true,         
	unicode=false,         
	pdftoolbar=true,       
	pdfmenubar=true,       
	pdffitwindow=false,    
	pdfstartview={FitH},   
	colorlinks=true,       
	linkcolor=black,         
	citecolor=black,        
	filecolor=black,      
	urlcolor=black}          

\begin{document}
	\def\K{\mathbb{K}}
	\def\R{\mathbb{R}}
	\def\C{\mathbb{C}}
	\def\Z{\mathbb{Z}}
	\def\Q{\mathbb{Q}}
	\def\D{\mathbb{D}}
	\def\N{\mathbb{N}}
	\def\T{\mathbb{T}}
	\def\P{\mathbb{P}}
	\def\A{\mathscr{A}}
	\def\CC{\mathscr{C}}
	\renewcommand{\theequation}{\thesection.\arabic{equation}}
	\renewenvironment{proof}{{\bfseries Proof:}}{\qed}
	\renewcommand{\thelemme}{\empty{}}
	\newtheorem{cond}{C}
	\newtheorem{lemma}{Lemma}[section]
	\newtheorem{corollary}{Corollary}[section]
	\newtheorem{proposition}{Proposition}[section]
	\newtheorem{notation}{Notation}[section]
	\newtheorem{remark}{Remark}[section]
	\newtheorem{example}{Example}[section]
	\newtheorem{probleme}{Problem}[section]
	\bibliographystyle{plain}

\title{\textbf { Introduction to $\partial\bar{\partial}$-Neumann.}}

\author{ Dieynaba Samb$^{\ast}$ }
\affil{University Assane Seck of Ziguinchor, BP: 523 (Sénégal)}
\affil{d.samb20150580@zig.univ.sn$^{\ast}$}

\author{ Souhaibou Sambou }
\affil{University Gaston Berger of Saint-Louis (Sénégal)}
\affil{souhaibou.sambou@ugb.edu.sn }

\author{ Mamadou Eramane Bodian }
\affil{University Assane Seck of Ziguinchor, BP: 523 (Sénégal)}
\affil{ m.bodian@univ-zig.sn}

\author{ Salomon Sambou }
\affil{University Assane Seck of Ziguinchor, BP: 523 (Sénégal)}
\affil{ssambou@univ-zig.sn }
\date{}


\maketitle
\begin{abstract}
	In this paper, we shall use the $L^2$ existence theorems for $\partial\bar{\partial}$ provide by Guy Laville to etablish for existence theorem for the  $\partial\bar{\partial}$-Neumann operator on any bounded pseudoconvex starred domaine $\Omega$ in $\mathbb{C}^n$  based on the work of D. Spencer on the 	$\bar{\partial}$-Neumann.
	  As a result, we obtain a new method for resolving the 	$\partial\bar{\partial}. $
	\\
	\noindent
	\textbf{Keywords}: 	$\partial\bar{\partial}$-Neumann, canonical solution, Bott-Chern Laplacian\\
	\noindent
	\textbf{Mathematics Subject Classification (2020). 32F17, 32W05.}
	
\end{abstract}

\section{Introduction}
The $\bar{\partial}$-Neumann problem was introduced in the fiefies as a means  for holomorphic functions on complex manifold(Cf {\red\cite{0}}).
Since then, the main applications to complexe analysis have centered around the solution of the cauchy-Riemann equation $\bar{\partial}u=f$  which arises from the solution of the $\bar{\partial}$-Neumann problem. \\
This problem was first solved on strictly pseudoconvex domain by Kohn(Cf {\red\cite{A}}), who proved existence and regularity proprieties of the $\bar{\partial}$-Neumann operator $N. $ The operator $\bar{\partial}^\ast N$ then solves the $\bar{\partial}$-equation $\bar{\partial}(\bar{\partial}^\ast N f)=f, $ provided $f$ is $\bar{\partial}$-closed and orthogonal to the space of harmonic forms.\\
 The $\bar{\partial}$-Neumann consists of looking for the inverse of the Laplacian. In fact, the two conditions are the injectivity and the surjectivity of the Laplacian.\\	
S. C. Chen and M. C. Shaw in {\color{red}\cite{1}} have established the following result by considering the Laplacian associated with $\bar{\partial}$ given as follows: $$\bar{\partial}\bar{\partial}^\ast+\bar{\partial}^\ast\bar{\partial}. $$
\begin{theorem}\item
\textnormal{
Let $\Omega$ be a pseudoconvex bounded domain in $\mathbb{C}^n$, $n\geq 2. $ For any $0\leq p \leq n, $ $1\leq q \leq n, $ there exists a bounded operator $$N_{p,q}: L^2_{p,q}(\Omega)\longrightarrow L^2_{p,q}(\Omega)$$ such that
\begin{enumerate}\item
$R(N_{p,q})\subset Dom(\square_{p,q})$
\item
$N_{p,q}\square_{p,q}=\square_{p,q}N_{p,q}=Id$ on $Dom(\square_{p,q})$
\item
For any $f\in L^2_{p,q}(\Omega),$ $$f=\bar{\partial}\bar{\partial}^\ast N_{p,q}f\oplus \bar{\partial}^\ast \bar{\partial} N_{p,q}f$$ 
\item
$\bar{\partial}N_{p,q}=N_{p,q+1}\bar{\partial}$ on $Dom(\bar{\partial})$ with $1\leq q\leq n-1$
\item
$\bar{\partial}^\ast N_{p,q}=N_{p,q-1}\bar{\partial}^\ast $ on $Dom(\bar{\partial})^\ast $ with $2\leq q\leq n$
\item
Let $\delta$ be the diameter of $\Omega$, for $f\in L^2_{p,q}(\Omega), $
$$||N_{p,q}f||\leq \frac{e\delta^2}{q}||f|| \mbox  {  and  }||\bar{\partial}N_{p,q}f||\leq \sqrt{\frac{e\delta^2}{q}}||f ||\mbox {  and  } ||\bar{\partial}^\ast N_{p,q}f||\leq \sqrt{\frac{e\delta^2}{q}}||f||. $$ 
\end{enumerate}
}
\end{theorem}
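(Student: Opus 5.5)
The plan is the classical Hörmander/Chen--Shaw route: derive an $L^2$ basic estimate for $\square_{p,q}$ from Hörmander's weighted estimate, obtain $N_{p,q}$ by Riesz representation, and then read off properties 1--6 from functional analysis.

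\textbf{Step 1: the basic estimate.} Work with the closed, densely defined Hilbert-space operators $\bar{\partial}:L^2_{p,q-1}\to L^2_{p,q}$ and $\bar{\partial}:L^2_{p,q}\to L^2_{p,q+1}$, and with the Hilbert adjoint $\bar{\partial}^\ast$. Set
$$\square_{p,q}=\bar{\partial}\bar{\partial}^\ast+\bar{\partial}^\ast\bar{\partial},$$
with its natural domain. For bounded pseudoconvex $\Omega$, choose the weight $\varphi(z)=|z-z_0|^2$ with $z_0\in\Omega$, which is strictly plurisubharmonic. The weight factor $e^{-\varphi}$ lies between $e^{-\delta^2}$ and $1$ on $\Omega$, and the constant $e$ will come from optimising a rescaled weight $t|z-z_0|^2/\delta^2$ over $t$. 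With this weight, Hörmander's $L^2$ theorem gives, for $f\in Dom(\bar{\partial})\cap Dom(\bar{\partial}^\ast)$ in degree $(p,q)$ with $q\ge 1$,
$$\|f\|^2\le \frac{e\delta^2}{q}\bigl(\|\bar{\partial}f\|^2+\|\bar{\partial}^\ast f\|^2\bigr).$$
The factor $q$ appears because the sum of any $q$ eigenvalues of $\partial\bar\partial\varphi$ is at least $q$. The delicate point is passing from the weighted adjoint $\bar{\partial}^\ast_\varphi$ back to the unweighted $\bar{\partial}^\ast$. I would handle this as Chen--Shaw do: first prove closed range and the solvability estimate $\|u\|\le C\|g\|$ for the canonical solution of $\bar{\partial}u=g$ (via a duality argument in the unweighted space), and then deduce the basic estimate from the orthogonal decomposition $f=f_1+f_2$, with $f_1\in\ker\bar{\partial}$ and $f_2\in(\ker\bar{\partial})^\perp$. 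On pseudoconvex domains the non-smoothness of $\partial\Omega$ must be handled by exhausting $\Omega$ with smooth strictly pseudoconvex $\Omega_\nu$ with uniform constants, or by working directly with Hörmander's density lemma. \emph{I expect this step, and in particular getting the sharp constant $e\delta^2/q$, to be the main obstacle.}

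\textbf{Step 2: construction of $N_{p,q}$.} Define the form
$$Q(f,g)=(\bar{\partial}f,\bar{\partial}g)+(\bar{\partial}^\ast f,\bar{\partial}^\ast g)$$
on $Dom(\bar{\partial})\cap Dom(\bar{\partial}^\ast)$. By Step 1 this domain is complete under $Q$, and $Q$ dominates $\frac{q}{e\delta^2}\|\cdot\|^2$. Given $f\in L^2_{p,q}$, the functional $g\mapsto (g,f)$ is $Q$-bounded, since
$$|(g,f)|\le \|f\|\,\|g\|\le \sqrt{e\delta^2/q}\,\|f\|\,Q(g,g)^{1/2}.$$
Riesz representation then yields a unique $Nf$ with $Q(Nf,g)=(f,g)$ for all $g$. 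Standard arguments (Gaffney/Friedrichs extension) show that $Nf\in Dom(\square)$ and $\square Nf=f$, which gives property 1. Injectivity of $\square$, which is immediate from the basic estimate, gives $N\square=\mathrm{Id}$ on $Dom(\square)$, so property 2 holds.

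\textbf{Step 3: estimates and commutation relations.} Taking $g=Nf$ in the defining identity gives
$$\|\bar{\partial}Nf\|^2+\|\bar{\partial}^\ast Nf\|^2=(f,Nf)\le\|f\|\,\|Nf\|\le \frac{e\delta^2}{q}\|f\|\,Q(Nf,Nf)^{1/2}\sqrt{q/(e\delta^2)}\,.$$
This yields the three bounds of property 6, starting with $\|Nf\|\le \frac{e\delta^2}{q}\|f\|$. Property 3 follows from $f=\square Nf$, together with orthogonality of $\mathrm{Range}\,\bar{\partial}$ and $\mathrm{Range}\,\bar{\partial}^\ast$ (a consequence of $\bar{\partial}^2=0$). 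For property 4, take $f\in Dom(\bar{\partial})$. Then $\bar{\partial}Nf\in Dom(\bar{\partial}^\ast)$, and
$$\square_{q+1}\bar{\partial}Nf=\bar{\partial}\bar{\partial}^\ast\bar{\partial}Nf=\bar{\partial}f-\bar{\partial}\bar{\partial}\bar{\partial}^\ast Nf=\bar{\partial}f.$$
Here one must check that $\bar{\partial}^\ast\bar{\partial}Nf=f-\bar{\partial}\bar{\partial}^\ast Nf$ lies in $Dom(\bar{\partial})$, which holds because $\bar{\partial}\bar{\partial}^\ast Nf\in\ker\bar{\partial}$. Applying $N_{q+1}$ gives $\bar{\partial}N_q=N_{q+1}\bar{\partial}$. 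Property 5 is obtained symmetrically, for $2\le q\le n$ so that $N_{q-1}$ is defined.
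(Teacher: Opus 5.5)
The paper does not prove this statement. It is Chen--Shaw's theorem, quoted as motivation, and your outline is a correct reconstruction of their argument: H\"ormander's weighted estimate with $\varphi=t|z-z_0|^2/\delta^2$ and $t=1$, exhaustion by smooth strictly pseudoconvex domains, the splitting $f=f_1+f_2$ to get the unweighted basic estimate, Riesz representation on the form domain, and the domain checks for the commutation relations. Your verification that $\bar{\partial}N_{p,q}f\in Dom(\square_{p,q+1})$ is exactly the step the paper skips in its own $\partial\bar{\partial}$ analogues.

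One phrase in Step 1 should be sharpened. The duality that produces a solution of $\bar{\partial}u=g$ takes place in the \emph{weighted} space, and the unweighted bound $\|u\|^2\le \frac{e\delta^2}{q}\|g\|^2$ then follows from $e^{-t}\le e^{-\varphi}\le 1$. The only unweighted duality you need is $\|f_1\|^2=(u,\bar{\partial}^\ast f_1)$. The $f_2$ piece uses the same bound in degree $(p,q+1)$, with the smaller constant $\frac{e\delta^2}{q+1}$, and $f_2=0$ when $q=n$.
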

It's in this same logic that we had the idea of introducing the $\partial\bar{\partial}$-Neumann in order to have a new method for solving the equation $\partial\bar{\partial}u=f$  different from the classical method as in {\color{red}\cite{13}} and in {\color{red}\cite{3}} . So we naturally thought of working with the Laplacian associated with the $\partial\bar{\partial}$ that we constructed by analogy with the one associated with the $\bar{\partial}. $ This laplacian is given as follows: $$\square_{\partial\bar{\partial}}=\partial\bar{\partial}(\partial\bar{\partial})^\ast+(\partial\bar{\partial})^\ast\partial\bar{\partial}. $$ 
We would like to establish the following result in $L_{p,q}^{2}(\Omega). $ On the other hand we do not have the injectivity on all the space $L_{p,q}^{2}(\Omega). $  Therefore, we will work on $L_{p,q}^{2}(\Omega)/ ker(\square_{\partial\bar{\partial}}). $ The result is as follows:
\begin{thm} \label{P}\item
	\textnormal{Let $\Omega$ be a strictly pseudoconvex bounded starred domain of $\mathbb{C}^n$, \\ $n\geq 2$ and $R(\square_{\partial\bar{\partial}})$ is closed.  For $1\leq p\leq n$ et $1\leq q\leq n, $ there exists an operator in the sense of the  Remark \eqref{DD} denoted by $N_{\partial\bar{\partial}}$ defined as  $$N_{\partial\bar{\partial}}:  L_{p,q}^{2}(\Omega)/ker(\square_{\partial\bar{\partial}})\longrightarrow L_{p,q}^{2}(\Omega)/ker(\square_{\partial\bar{\partial}})$$ verifying the following properties:}
	\begin{itemize}\item[(1)]
		$R(N_{\partial\bar{\partial}})\subset Dom(\square_{\partial\bar{\partial}})\cap ker(\square_{\partial\bar{\partial}})^\perp$
		\item[(2)]
		$N_{\partial\bar{\partial}}\square_{\partial\bar{\partial}}=\square_{\partial\bar{\partial}}N_{\partial\bar{\partial}}=Id$ on $R(\square_{\partial\bar{\partial}})$
		\item[(3)]
		For all $f\in L^2_{p,q}(\Omega),$ $$f=f_1\oplus\partial\bar{\partial}(\partial\bar{\partial})^\ast N_{\partial\bar{\partial}}f_2\oplus (\partial\bar{\partial})^\ast \partial\bar{\partial} N_{\partial\bar{\partial}}f_2$$ where $f_1\in ker(\square_{\partial\bar{\partial}})$ and $f_2\in R(\square_{\partial\bar{\partial}})$
		\item[(4)]
		$\partial\bar{\partial}N_{p,q}=N_{p+1,q+1}\partial\bar{\partial}$ on $Dom(\partial\bar{\partial})$ with \\$1\leq p\leq n-1, $ $1\leq q\leq n-1$
		\item[(5)]
		$(\partial\bar{\partial})^\ast N_{p,q}=N_{p-1,q-1}(\partial\bar{\partial})^\ast $ on $Dom(\partial\bar{\partial})^\ast $ with $2\leq p\leq n, $ $2\leq q\leq n$
		\item[(6)]
		For $f\in L^2_{p,q}(\Omega), $
		\textnormal{there exists a constant $k\geq 0$ such that:}
	\end{itemize}
	\textnormal{
		$$||N_{\partial\bar{\partial}}f||\leq k||f|| \mbox  {  and }||\partial\bar{\partial}N_{p,q}f||\leq \sqrt{k}||f ||\mbox {  and  } ||(\partial\bar{\partial})^\ast N_{p,q}f||\leq \sqrt{k}||f||. $$}
\end{thm}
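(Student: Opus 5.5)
The plan is to treat $\square_{\partial\bar{\partial}}$ as a densely defined, closed, nonnegative self-adjoint operator on $L^2_{p,q}(\Omega)$ and to construct $N_{\partial\bar{\partial}}$ as its inverse on the orthogonal complement of the kernel. First I will check that $\partial\bar{\partial}$, taken in the weak (maximal) sense, is closed and densely defined from $L^2_{p-1,q-1}$ to $L^2_{p,q}$ (and likewise in each bidegree), so that its Hilbert adjoint $(\partial\bar{\partial})^\ast$ is also closed and densely defined. Then $\square_{\partial\bar{\partial}}=\partial\bar{\partial}(\partial\bar{\partial})^\ast+(\partial\bar{\partial})^\ast\partial\bar{\partial}$ is the operator associated with the closed quadratic form
$$Q(u,u)=\|(\partial\bar{\partial})^\ast u\|^2+\|\partial\bar{\partial} u\|^2,\qquad u\in Dom(\partial\bar{\partial})\cap Dom(\partial\bar{\partial})^\ast,$$
and Gaffney's theorem (von Neumann) gives self-adjointness and nonnegativity. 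Consequently $ker(\square_{\partial\bar{\partial}})=ker(\partial\bar{\partial})\cap ker(\partial\bar{\partial})^\ast$, and $L^2_{p,q}(\Omega)=ker(\square_{\partial\bar{\partial}})\oplus\overline{R(\square_{\partial\bar{\partial}})}$. Since $R(\square_{\partial\bar{\partial}})$ is assumed closed, this becomes $L^2_{p,q}=ker\oplus R(\square_{\partial\bar{\partial}})$, and the quotient $L^2_{p,q}/ker(\square_{\partial\bar{\partial}})$ is identified with $R(\square_{\partial\bar{\partial}})=ker(\square_{\partial\bar{\partial}})^\perp$.

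Next I will define $N_{\partial\bar{\partial}}$. The restriction $\square_{\partial\bar{\partial}}:Dom(\square_{\partial\bar{\partial}})\cap ker^\perp\to R(\square_{\partial\bar{\partial}})$ is a bijection. The closed range theorem, or equivalently the open mapping theorem applied to a closed operator with closed range, gives a constant $k$ with
$$\|u\|\le k\|\square_{\partial\bar{\partial}}u\|\qquad\text{for } u\in Dom(\square_{\partial\bar{\partial}})\cap ker^\perp.$$
I set $N_{\partial\bar{\partial}}$ equal to the inverse on $R(\square_{\partial\bar{\partial}})$, extended by $0$ on the kernel. This yields (1) and (2) at once, as well as the bound $\|N_{\partial\bar{\partial}}f\|\le k\|f\|$. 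For the other estimates in (6), I write
$$\|\partial\bar{\partial}Nf\|^2+\|(\partial\bar{\partial})^\ast Nf\|^2=\langle \square_{\partial\bar{\partial}} Nf,Nf\rangle\le\|f\|\,\|Nf\|\le k\|f\|^2.$$
Property (3) follows by writing $f=f_1+f_2$ with $f_1\in ker$ and $f_2\in R(\square_{\partial\bar{\partial}})$, and then $f_2=\square_{\partial\bar{\partial}} Nf_2$. The two terms are orthogonal because $R(\partial\bar{\partial})\perp R((\partial\bar{\partial})^\ast)$, which holds since $\partial\bar{\partial}\partial\bar{\partial}=0$.

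For the commutation relations (4) and (5), I will follow the standard $\bar\partial$ argument used for Theorem 1.1. Take $f\in Dom(\partial\bar{\partial})$. Since $f-f_1=\square_{\partial\bar{\partial}} Nf$ and $\partial\bar{\partial}$ kills $f_1$, we get $\partial\bar{\partial}f=\partial\bar{\partial}(\partial\bar{\partial})^\ast\partial\bar{\partial}Nf$. The right side lies in $R(\square_{\partial\bar{\partial}})$ in degree $(p+1,q+1)$ and equals $\square_{\partial\bar{\partial}}(\partial\bar{\partial}Nf)$, because $(\partial\bar{\partial})^\ast\partial\bar{\partial}Nf\in Dom(\partial\bar{\partial})$ and $\partial\bar{\partial}\partial\bar{\partial}Nf=0$. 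Moreover $\partial\bar{\partial}Nf\perp ker$. Applying $N_{p+1,q+1}$ then gives (4), and (5) follows in the same way.

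The use of the starred, strictly pseudoconvex hypothesis, through Laville's $L^2$ existence theorem for $\partial\bar{\partial}$, enters in identifying $ker(\partial\bar{\partial})$ with $R(\partial\bar{\partial})$ modulo the harmonic part. It also serves to justify closedness of $R(\partial\bar{\partial})$ if one wants an explicit $k$. However, the hypothesis that $R(\square_{\partial\bar{\partial}})$ is closed already suffices for the argument above. The main obstacle I expect is the domain bookkeeping. One must verify that $\partial\bar{\partial}$ is closed as an operator, which I would do by defining it weakly in the distributional sense, and that $(\partial\bar{\partial})^\ast\partial\bar{\partial}Nf$ lies in $Dom(\partial\bar{\partial})$, so that the commutation steps are legitimate. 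For the latter I would argue via the form domain exactly as in Chen–Shaw.
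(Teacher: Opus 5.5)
Your proposal is correct and is essentially the paper's argument. $N_{\partial\bar{\partial}}$ is the inverse of $\square_{\partial\bar{\partial}}$ on $\ker(\square_{\partial\bar{\partial}})^\perp=R(\square_{\partial\bar{\partial}})$, extended by $0$ on the kernel; (6) comes from the closed-range estimate plus $\|\partial\bar{\partial}Nf\|^2+\|(\partial\bar{\partial})^\ast Nf\|^2=\langle Nf,f\rangle$; and (4)--(5) come from applying $\partial\bar{\partial}$ or $(\partial\bar{\partial})^\ast$ to $f=f_1+\square_{\partial\bar{\partial}}Nf$ and inverting in the neighbouring bidegree. Your extra checks (Gaffney for self-adjointness, and $\partial\bar{\partial}Nf\in Dom(\square_{\partial\bar{\partial}})\cap\ker(\square_{\partial\bar{\partial}})^\perp$ in bidegree $(p+1,q+1)$) supply what the paper leaves implicit, and the domain point you flag is immediate, since $\square_{\partial\bar{\partial}}Nf=f-f_1\in Dom(\partial\bar{\partial})$ while $\partial\bar{\partial}(\partial\bar{\partial})^\ast Nf\in\ker(\partial\bar{\partial})$.
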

This result is the simplest form to obtain the $\partial\bar{\partial}$-Neumann. However, as we know how to solve the $\partial\bar{\partial}$-equation  with a $d$-closed form, we restricted the work to  $L_{p,q}^{2}(\Omega)\cap\ker(d)$ to allow $ker(\square_{\partial\bar{\partial}})=0. $ In other words guarantee injectivity which is one of the essential conditions to obtain $\partial\bar{\partial}$-Neumann. The result is as follows:
\begin{thm} \item
\textnormal{Let $\Omega$ be a strictly pseudoconvex bounded starred domain of $\mathbb{C}^n$, $n\geq 2. $ For $1\leq p\leq n$ and $1\leq q\leq n, $ there exists an operator $$N_{\partial\bar{\partial}}:  L_{p,q}^{2}(\Omega)\cap\ker(d)\longrightarrow L_{p,q}^{2}(\Omega)\cap\ker(d)$$ verifying the following properties:}
\begin{itemize}\item[(1)]
$R(N_{\partial\bar{\partial}})\subset Dom(\square_{\partial\bar{\partial}}), $
\item[(2)]
$N_{\partial\bar{\partial}}\square_{\partial\bar{\partial}}=\square_{\partial\bar{\partial}}N_{\partial\bar{\partial}}=Id$ on $Dom(\square_{\partial\bar{\partial}})$
\item[(3)]
For any $f\in L^2_{p,q}(\Omega)\cap\ker(d),$ $$f=\partial\bar{\partial}(\partial\bar{\partial})^\ast N_{\partial\bar{\partial}}f\bigoplus (\partial\bar{\partial})^\ast \partial\bar{\partial} N_{\partial\bar{\partial}}f$$
\item[(4)]
$\partial\bar{\partial}N_{p,q}=N_{p+1,q+1}\partial\bar{\partial}$ on $Dom(\partial\bar{\partial})$ with $1\leq p\leq n-1, $ $1\leq q\leq n-1. $
\item[(5)]
$(\partial\bar{\partial})^\ast N_{p,q}=N_{p-1,q-1}(\partial\bar{\partial})^\ast $ on $Dom(\partial\bar{\partial})^\ast $ with $2\leq p\leq n, $ $2\leq q\leq n. $
\item[(6)]
For $f\in L^2_{p,q}(\Omega)\cap\ker(d), $
\textnormal{there exists a constant $k\geq 0$ such that :}
\end{itemize}
\textnormal{
$$||N_{\partial\bar{\partial}}f||\leq k||f||\mbox  {  and  } ||\partial\bar{\partial}N_{p,q}f||\leq \sqrt{k}||f||\mbox  {  and  }||(\partial\bar{\partial})^\ast N_{p,q}f||\leq \sqrt{k}||f||. $$ }
\end{thm}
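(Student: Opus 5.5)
We follow the abstract Hilbert-space route that Chen and Shaw use for the $\bar{\partial}$-Neumann operator, with $\bar{\partial}$ replaced by the closed, densely defined operator $T=\partial\bar{\partial}$ and its Hilbert adjoint $T^\ast=(\partial\bar{\partial})^\ast$. We work in $H_{p,q}=L^2_{p,q}(\Omega)\cap\ker(d)$, which is a closed subspace of $L^2_{p,q}(\Omega)$ because $d$ is closed. We define $\square_{\partial\bar{\partial}}=TT^\ast+T^\ast T$ there through the quadratic form
$$Q(u,u)=\|Tu\|^2+\|T^\ast u\|^2, \qquad u\in Dom(T)\cap Dom(T^\ast)\cap H_{p,q}.$$
By the Friedrichs (Gaffney) construction, $\square_{\partial\bar{\partial}}$ is then self-adjoint and nonnegative, and $Dom(\square_{\partial\bar{\partial}})$ consists of those $u$ with $Tu\in Dom(T^\ast)$ and $T^\ast u\in Dom(T)$.

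\textbf{Key estimate.} The central step is the basic inequality
$$\|u\|^2\le k\,Q(u,u) \qquad \text{for all } u\in Dom(T)\cap Dom(T^\ast)\cap H_{p,q}.$$
We obtain it from Laville's $L^2$ existence theorem for $\partial\bar{\partial}$ on bounded pseudoconvex starred domains, applied on $d$-closed forms. That theorem says that for each $d$-closed $f$ (in degrees $p,q\ge1$) there is a solution of $Tv=f$ with $\|v\|\le C\|f\|$. So $R(T)\cap H$ is closed and equals the $d$-closed part. The same statement one degree higher gives that $\ker T$ on $H_{p,q}$ is exactly $R(T)$ from degree $(p-1,q-1)$, which is the exactness of the $d$-closed complex.

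Next, the closed range theorem gives closedness of $R(T^\ast)$ together with $\|w\|\le C\|T^\ast w\|$ for $w\in Dom(T^\ast)\cap R(T)$. Write $u=u_1+u_2$ with $u_1\in\ker T=R(T)$ and $u_2\in(\ker T)^\perp$. Then $\|u_1\|\le C\|T^\ast u\|$, $\|u_2\|\le C\|Tu\|$, and we can take $k=2C^2$.

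This step is the main obstacle. The exactness $\ker T\cap H=R(T)$ is exactly what rules out a nontrivial kernel $\ker\square_{\partial\bar{\partial}}$, and it is the reason for restricting to $\ker(d)$. One also has to check that $T$ and $T^\ast$ preserve the spaces $H$, or else replace $T^\ast$ by $P_H T^\ast$ with $P_H$ the orthogonal projection. This compatibility with $d$-closedness is the delicate point, and it is where the starred (Poincaré lemma) hypothesis enters.

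\textbf{Conclusions.} The estimate gives $\|u\|^2\le kQ(u,u)\le k\|\square u\|\|u\|$, so $\square_{\partial\bar{\partial}}$ is injective with closed range. Since it is self-adjoint, its range is $(\ker\square)^\perp=H$. We define $N_{\partial\bar{\partial}}=\square^{-1}$. This operator satisfies $\|Nf\|\le k\|f\|$, which gives (1) and (2) and the first bound in (6). The identity
$$\|TNf\|^2+\|T^\ast Nf\|^2=\langle f,Nf\rangle\le k\|f\|^2$$
gives the remaining bounds in (6). The decomposition (3) is just $f=\square Nf$, and the two summands are orthogonal because $T^2=0$, i.e.\ $R(T)\perp R(T^\ast)$.

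For (4), take $f\in Dom(T)$. Then $TNf\in Dom(\square)$, and $\square TNf=TT^\ast TNf=T(\square Nf)=Tf$, using $T^2=0$. Applying $N_{p+1,q+1}$ gives $TN_{p,q}f=N_{p+1,q+1}Tf$. Property (5) follows in the same way with $T^\ast$ in place of $T$.
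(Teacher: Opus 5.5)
Your outline follows the paper's own route. Laville's $L^2$ solvability of $\partial\bar{\partial}v=f$ for $d$-closed $f$ gives closed range and the estimate $\|u\|\le k\|\square_{\partial\bar{\partial}}u\|$ on $H_{p,q}=L^2_{p,q}(\Omega)\cap\ker(d)$; $N_{\partial\bar{\partial}}$ is the inverse; (3) and (6) come from $\langle \square Nf,Nf\rangle$; and (4), (5) come from commuting $T$ or $T^\ast$ through $\square$.

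There is a genuine gap, and it is exactly the \emph{delicate point} you leave open. It cannot be closed. A $(p,q)$-form has pure type, so $df=0$ forces $\partial f=\bar{\partial}f=0$, hence $Tf=0$. So $T$ vanishes identically on $H_{p,q}$; the paper itself uses this in the remark after the theorem. Consequently:
\begin{itemize}
\item your $u_2$ is always $0$;
\item $\square=TT^\ast$ on $H_{p,q}$;
\item (4) reads $0=0$.
\end{itemize}
Take $T^\ast$ to be the full Hilbert adjoint of $T_{p-1,q-1}:L^2_{p-1,q-1}\to L^2_{p,q}$. Then $TT^\ast$ maps into $R(T)=H_{p,q}$. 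The Friedrichs operator of $Q$ on $H_{p,q}$ is indeed $TT^\ast$, because $H_{p,q}^\perp=R(T)^\perp=\ker T^\ast$. So the construction of $N$ and properties (1)--(3) and (6) go through.

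The failure is (5). By the same pure-type observation, $H_{p-1,q-1}\subset\ker T_{p-1,q-1}$. Hence every $g\in H_{p,q}\cap Dom(T^\ast)$ satisfies
$T^\ast g\in\overline{R(T_{p-1,q-1}^\ast)}=(\ker T_{p-1,q-1})^\perp\subset H_{p-1,q-1}^\perp$.
So $T^\ast g$ lies in $H_{p-1,q-1}$ only if $T^\ast g=0$.
\begin{itemize}
\item For $g=N_{p,q}f$ this would give $f=TT^\ast N_{p,q}f=0$. So for $f\neq 0$, $T^\ast N_{p,q}f\notin Dom(\square_{p-1,q-1})\subset H_{p-1,q-1}$.
\item Likewise $T^\ast f\notin H_{p-1,q-1}$, since otherwise $f\in\ker T^\ast\cap R(T)=\{0\}$. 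So $N_{p-1,q-1}(T^\ast f)$ is not even defined.
\end{itemize}
Thus \emph{(5) follows in the same way} fails at its first step.

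Your proposed repair makes things worse. By the same orthogonality, $P_HT^\ast u=0$ for every $u\in H_{p,q}$. With $P_HT^\ast$ in place of $T^\ast$, the form $Q$ vanishes identically, your key estimate $\|u\|^2\le kQ(u,u)$ is false, and $\square\equiv 0$. This is an orthogonality obstruction, not a regularity one, so the starred hypothesis cannot help.

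The paper's proof of (5) makes the same formal move. It applies $N_{p-1,q-1}$ to $(\partial\bar{\partial})^\ast f$ and uses $N_{p-1,q-1}\square_{\partial\bar{\partial}}=I$ on $(\partial\bar{\partial})^\ast N_{p,q}f$. So it does not supply the missing step either.

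Property (5) only becomes true if $N_{p-1,q-1}$ is allowed to act off $\ker(d)$, for instance as the inverse of the full-space $\square_{p-1,q-1}$ on $(\ker\square_{p-1,q-1})^\perp$. There, using $f\in Dom(T^\ast)$ and $T^\ast T^\ast=0$, one gets $\square_{p-1,q-1}T^\ast N_{p,q}f=T^\ast TT^\ast N_{p,q}f=T^\ast f$. Also $T^\ast N_{p,q}f\in R(T^\ast)\perp\ker T_{p-1,q-1}\supset\ker\square_{p-1,q-1}$. Together these give $N_{p-1,q-1}T^\ast f=T^\ast N_{p,q}f$.
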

It is important to point out that this Laplacian is not elliptic. Which prevents you from going from $L^2(\Omega)$ to $L^\infty(\Omega)$ even if we obtain the canonical solution. Being always looking for better results, we thought of working with the Bott-Chern Laplacian. With this Laplacian (non-elliptic and elliptic), we obtain the $\partial\bar{\partial}$-Neumann but we do not obtain the canonical solution. The results are given by Theorem \eqref{H} and \eqref{E}.\\
This work will be divided into four parts. The objective in each part is to introduce if possible the $\partial\bar{\partial}$-Neumann and their properties.
\section{Preliminary General}
In this section, we will define a few concepts that will be useful to us later on.
           \textnormal{Let $\Omega \subset \mathbb{C}^n$ be a bounded domain with a smooth boundary.
              The space $L^2(\Omega)$ is defined as: $$ L^2(\Omega)=\lbrace f: \Omega\rightarrow\mathbb{C}\mid \int_\Omega|f|^2 dm<+\infty \rbrace $$ where $dm$ is the volume element. $L^2(\Omega)$ has the norm  $$||f||_2=\big(\int_\Omega|f|^2 dm\big)^\frac{1}{2}$$ which comes from the scalar product defined as follows: $\forall\;\;f,g\in  L^2(\Omega) $  $$<f,g>=\int_\Omega (f\times \overline{g})dm$$ is an Hilbert space.}
                We define the space  $L^2_{p,q}(\Omega)$ as the space of  $(p,q)$-differential forms with coefficients in  $L^2(\Omega). $
                   We define the operator  $\partial\bar{\partial}$ in the sense of distributions as:
                $$\partial\bar{\partial}: L_{p,q}^{2}(\Omega)\longrightarrow L_{p+1,q+1}^{2}(\Omega)$$ and its domain is given by $$Dom(\partial\bar{\partial})=\lbrace f\in L_{p,q}^{2}(\Omega)\mid \partial\bar{\partial}f\in L_{p+1,q+1}^{2}(\Omega)\rbrace. $$\\
                Its adjoint is given by: $$(\partial\bar{\partial})^\ast:L_{p+1,q+1}^{2}(\Omega)\longrightarrow L_{p,q}^{2}(\Omega). $$
                Its domain is given by: $$Dom(\partial\bar{\partial})^\ast=\lbrace f\in L_{p+1,q+1}^{2}(\Omega)\mid (\partial\bar{\partial})^\ast f\in L_{p,q}^{2}(\Omega)\rbrace. $$
               \textnormal{ We define the laplacian of the $\partial\bar{\partial}$ denoted $\square_{\partial\bar{\partial}}$} by: $$\square_{\partial\bar{\partial}}=\partial\bar{\partial}(\partial\bar{\partial})^\ast+(\partial\bar{\partial})^\ast\partial\bar{\partial}. $$
               \begin{definition}\item
             \textnormal{ Let $ \square_{\partial\bar{\partial}}$ be an operator defined by  $L_{p,q}^{2}(\Omega)\longrightarrow L_{p,q}^{2}(\Omega)$ such that
             	 \\$Dom(\square_{\partial\bar{\partial}})=\lbrace f\in Dom(\partial\bar{\partial})\cap Dom(\partial\bar{\partial})^\ast\mid \partial\bar{\partial}f\in Dom(\partial\bar{\partial})^\ast \\\mbox {   and  }  (\partial\bar{\partial})^\ast f\in Dom(\partial\bar{\partial})\rbrace. $}
               \end{definition}
           
   \section{The inverse of $\square_{\partial\bar{\partial}}=\partial\bar{\partial}(\partial\bar{\partial})^\ast+(\partial\bar{\partial})^\ast\partial\bar{\partial}$ on $L_{p,q}^{2}(\Omega)/ ker(\square_{\partial\bar{\partial}}). $ }
   In this section we work with the Laplacian associated with $\partial\bar{\partial}$ given as $$\partial\bar{\partial}(\partial\bar{\partial})^\ast+(\partial\bar{\partial})^\ast\partial\bar{\partial}. $$ 
\subsection{Some Lemmas}
Before proving the main theorem of this section (Theorem \eqref{P}), we will state by a few useful lemmas.
\begin{lemme}\item\label{DS}
\textnormal{The operator $$\square_{\partial\bar{\partial}}:L_{p,q}^{2}(\Omega)\longrightarrow L_{p,q}^{2}(\Omega) $$ is closed, densely-defined and self-adjoint. }
\end{lemme}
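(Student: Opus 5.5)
The plan is to transpose the standard proof that the $\bar{\partial}$-Laplacian is self-adjoint (Gaffney's argument, as in Chen--Shaw) to $T=\partial\bar{\partial}$. Two pieces need separate treatment:
\begin{itemize}
\item closedness and dense definition of $\partial\bar{\partial}$ and $(\partial\bar{\partial})^\ast$ as operators between $L^2_{p,q}(\Omega)$ and $L^2_{p+1,q+1}(\Omega)$;
\item self-adjointness of the sum $\partial\bar{\partial}(\partial\bar{\partial})^\ast+(\partial\bar{\partial})^\ast\partial\bar{\partial}$ with the domain of the Definition above.
\end{itemize}

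\textbf{The basic operators.} Since $\partial\bar{\partial}$ is defined in the sense of distributions with maximal domain, it is closed. If $f_j\to f$ and $\partial\bar{\partial}f_j\to g$ in $L^2$, then pairing with test forms gives $\partial\bar{\partial}f=g$ in the distributional sense, so $f\in Dom(\partial\bar{\partial})$. It is densely defined because $\mathcal{D}_{p,q}(\Omega)\subset Dom(\partial\bar{\partial})$. Its Hilbert-space adjoint $(\partial\bar{\partial})^\ast$ is then closed and densely defined by von Neumann's theorem, and $(\partial\bar{\partial})^{\ast\ast}=\partial\bar{\partial}$.

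\textbf{Rewriting the Laplacian.} I will not work with the sum directly. Instead I introduce the operator
$$S:\ L^2_{p,q}(\Omega)\longrightarrow L^2_{p+1,q+1}(\Omega)\oplus L^2_{p-1,q-1}(\Omega),\qquad Sf=(\partial\bar{\partial}f,(\partial\bar{\partial})^\ast f),$$
with $Dom(S)=Dom(\partial\bar{\partial})\cap Dom(\partial\bar{\partial})^\ast$. A short check gives $\square_{\partial\bar{\partial}}=S^\ast S$ with exactly the domain of the Definition. The case-specific input is that $\partial\bar{\partial}\circ\partial\bar{\partial}=0$, and hence $(\partial\bar{\partial})^\ast\circ(\partial\bar{\partial})^\ast=0$. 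This makes the cross terms vanish. It also gives the inclusions $R(\partial\bar{\partial})\subset\ker(\partial\bar{\partial})$ and $R((\partial\bar{\partial})^\ast)\subset\ker((\partial\bar{\partial})^\ast)$, which identify $Dom(S^\ast S)$ with the stated domain.

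With this reduction, von Neumann's theorem (if $S$ is closed and densely defined, then $S^\ast S$ is self-adjoint and densely defined) finishes the proof. So it remains to show that $S$ is closed and densely defined.

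\textbf{Closedness of $S$.} This follows from the closedness of each component. If $f_j\to f$, $\partial\bar{\partial}f_j\to g_1$ and $(\partial\bar{\partial})^\ast f_j\to g_2$, then closedness of each operator gives $f$ in both domains with $\partial\bar{\partial}f=g_1$ and $(\partial\bar{\partial})^\ast f=g_2$.

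\textbf{Density of $Dom(S)$ (main obstacle).} The intersection of the two domains must be dense. In the $\bar{\partial}$ case this is handled by showing that forms smooth up to the boundary satisfying the $\bar{\partial}$-Neumann boundary condition are dense. Here $(\partial\bar{\partial})^\ast$ is second order, so its boundary conditions are more involved. The simplest route is to use that $\mathcal{D}_{p,q}(\Omega)$ lies in both domains: compactly supported smooth forms are in $Dom(\partial\bar{\partial})^\ast$, because integration by parts gives no boundary terms. Since $\mathcal{D}_{p,q}(\Omega)$ is dense in $L^2_{p,q}(\Omega)$, $Dom(S)$ is dense, and this settles the obstacle. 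Density of $Dom(\square_{\partial\bar{\partial}})$ itself then comes from von Neumann's theorem, and self-adjointness follows.
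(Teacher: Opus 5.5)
Your proof is correct, and it takes a genuinely different route from the paper. The paper works directly on $\square_{\partial\bar\partial}$:
\begin{itemize}
\item density comes from $D^{p,q}(\Omega)\subset Dom(\square_{\partial\bar\partial})$, which is true because $\partial\bar\partial$ and its formal adjoint preserve compactly supported smooth forms, so you could use it too;
\item self-adjointness comes from the identity $\langle\square_{\partial\bar\partial}u,v\rangle=\langle u,\square_{\partial\bar\partial}v\rangle$ for $u,v\in Dom(\square_{\partial\bar\partial})$;
\item closedness is argued by taking $f_n\to f$ with $f$ already in the domain and concluding $\square_{\partial\bar\partial}f_n\to\square_{\partial\bar\partial}f$.
\end{itemize}
That identity gives only symmetry, $\square_{\partial\bar\partial}\subset\square_{\partial\bar\partial}^\ast$. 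The closedness step is a continuity claim, not closedness. Your factorization $\square_{\partial\bar\partial}=S^\ast S$ together with von Neumann's theorem supplies what is missing there, namely the reverse inclusion $Dom(\square_{\partial\bar\partial}^\ast)\subset Dom(\square_{\partial\bar\partial})$. Closedness then comes for free, since self-adjoint operators are closed.

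The price is that identifying $Dom(S^\ast S)$ with the domain of the Definition is where the real work lies, and you should write it out. Your remark about ``cross terms'' is not what is needed here. Write $T_0=\partial\bar\partial$ on $L^2_{p-1,q-1}(\Omega)$ and $T_1=\partial\bar\partial$ on $L^2_{p,q}(\Omega)$, so $S=(T_1,T_0^\ast)$. You must show $Dom(S^\ast)=Dom(T_1^\ast)\oplus Dom(T_0)$:
\begin{itemize}
\item split $g\in Dom(T_1)$ along $\ker T_1\oplus\overline{R(T_1^\ast)}$;
\item split $g\in Dom(T_0^\ast)$ along $\ker T_0^\ast\oplus\overline{R(T_0)}$.
\end{itemize}
Because $\overline{R(T_1^\ast)}\subset\ker T_0^\ast$ and $\overline{R(T_0)}\subset\ker T_1$, the second piece in each case lies in $Dom(S)$. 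It is killed by the other component of $S$ and has the same image under the relevant one. So boundedness of $h\mapsto\langle Sh,(g_1,g_2)\rangle$ on $Dom(S)$ passes to each coordinate separately, giving $g_1\in Dom(T_1^\ast)$ and $g_2\in Dom(T_0^{\ast\ast})=Dom(T_0)$. This is the Gaffney--H\"ormander argument. It is exactly where $T_1T_0=0$ enters, which holds for the maximal realizations because $\partial\bar\partial\partial\bar\partial=0$ on distributions.
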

\begin{proof}
Let's show that $\square_{\partial\bar{\partial}}$ is closed.\\
Let $f_n\in Dom(\square_{\partial\bar{\partial}})$ such that $ f_n \longrightarrow f \mbox{  with  } f\in Dom(\square_{\partial\bar{\partial}}). $\\ Let's show $\square_{\partial\bar{\partial}}f_n\longrightarrow \square_{\partial\bar{\partial}}f. $\\
We have:
\begin{align*}
<\square_{\partial\bar{\partial}}f_n, f_n> &= <(\partial\bar{\partial}(\partial\bar{\partial})^\ast+(\partial\bar{\partial})^\ast\partial\bar{\partial})f_n, f_n>\\
& = <(\partial\bar{\partial}(\partial\bar{\partial})^\ast)f_n+((\partial\bar{\partial})^\ast\partial\bar{\partial})f_n, f_n>\\
& = <(\partial\bar{\partial}(\partial\bar{\partial})^\ast)f_n, f_n>+<((\partial\bar{\partial})^\ast\partial\bar{\partial})f_n, f_n>\\
& = <(\partial\bar{\partial})^\ast f_n,(\partial\bar{\partial})^\ast f_n>+<\partial\bar{\partial}f_n, \partial\bar{\partial}f_n>\\
& = ||(\partial\bar{\partial})^\ast f_n||^2+||\partial\bar{\partial}f_n||^2. 
\end{align*}
Now $\partial\bar{\partial}$ is a closed operator because it is continuous in the space of distributions.\\
Indeed we have:\\ $<\partial\bar{\partial}f_n, f>=<f_n, (\partial\bar{\partial})^\ast f>\longrightarrow <f, (\partial\bar{\partial})^\ast f>=<\partial\bar{\partial}f, f>. $\\
Thus, $$<\partial\bar{\partial}f_n, f>\longrightarrow <\partial\bar{\partial}f, f> \forall\;\;f\in Dom(\partial\bar{\partial})^\ast\cap Dom(\partial\bar{\partial}). $$ 
So $$\partial\bar{\partial}f_n\longrightarrow \partial\bar{\partial}f. $$
In the same way, we show that $(\partial\bar{\partial})^\ast$ is a closed operator. \\
So we have $$\partial\bar{\partial}^\ast f_n\longrightarrow\partial\bar{\partial}^\ast f. $$
Therefore $$\partial\bar{\partial}(\partial\bar{\partial})^\ast f_n\longrightarrow \partial\bar{\partial}(\partial\bar{\partial})^\ast f. $$
In the same way $$\partial\bar{\partial}f_n\longrightarrow \partial\bar{\partial}f. $$
So $$(\partial\bar{\partial})^\ast\partial\bar{\partial}f_n\longrightarrow (\partial\bar{\partial})^\ast\partial\bar{\partial}f. $$
In conclusion: $$\partial\bar{\partial}(\partial\bar{\partial})^\ast f_n+(\partial\bar{\partial})^\ast\partial\bar{\partial}f_n\longrightarrow \partial\bar{\partial}(\partial\bar{\partial})^\ast f+(\partial\bar{\partial})^\ast\partial\bar{\partial}f. $$
So $$\square_{\partial\bar{\partial}} f_n\longrightarrow \square_{\partial\bar{\partial}} f. $$
$\square_{\partial\bar{\partial}}$ is therefore closed. \\
Let's show that  $\square_{\partial\bar{\partial}}$ is densely-defined. \\
Let $D^{p,q}(\Omega)$ be the space of smooth $(p,q)$ forms  with compact support.\\
We have the following inclusions:
$$D^{p,q}(\Omega)\subset Dom(\square_{\partial\bar{\partial}})\subset L^2_{p,q}(\Omega). $$
And $$\overline{D^{p,q}(\Omega)}\subset\overline{Dom(\square_{\partial\bar{\partial}})}\subset\overline{L^2_{p,q}(\Omega)}. $$
Now  $L^2_{p,q}(\Omega)=\overline{L^2_{p,q}(\Omega)}$ and $D^{p,q}(\Omega)$ is dense in $L^2_{p,q}(\Omega)$ therefore \\$\overline{D^{p,q}(\Omega)}=L^2_{p,q}(\Omega). $
Thus we have: 

 $$\overline{Dom(\square_{\partial\bar{\partial}}})=L^2_{p,q}(\Omega). $$
Therefore $\square_{\partial\bar{\partial}}$ is densely-defined.\\
Let's show that $\square_{\partial\bar{\partial}}$ is self-adjoint.\\
Let $u, v\in Dom(\square_{\partial\bar{\partial}})$ we have: 
\begin{align*}
<\square_{\partial\bar{\partial}}u, v> &= <(\partial\bar{\partial}(\partial\bar{\partial})^\ast+(\partial\bar{\partial})^\ast\partial\bar{\partial})u, v>\\
& = <(\partial\bar{\partial}(\partial\bar{\partial})^\ast)u, v>+<((\partial\bar{\partial})^\ast\partial\bar{\partial})u, v>\\
& = <(\partial\bar{\partial})^\ast u,(\partial\bar{\partial})^\ast v>+<\partial\bar{\partial}u, \partial\bar{\partial}v>\\
& = < u,(\partial\bar{\partial})^\ast \partial\bar{\partial} v>+<u, \partial\bar{\partial}(\partial\bar{\partial})^\ast v>\\
& = <u, ((\partial\bar{\partial})^\ast \partial\bar{\partial}+\partial\bar{\partial}(\partial\bar{\partial})^\ast)v>\\
& = <u, \square_{\partial\bar{\partial}}v>. 
\end{align*}
Thus $<\square_{\partial\bar{\partial}}u, v>=<u,\square_{\partial\bar{\partial}}v>. $
 Therefore $\square_{\partial\bar{\partial}}$ is self-adjoint.
\end{proof}
\begin{remark}\item
\textnormal{According to the Hodge decomposition in Hilbert spaces, we have: $$L^2_{p,q}(\Omega)=\overline{R(\square_{\partial\bar{\partial}})}\oplus ker(\square_{\partial\bar{\partial}}). $$}
\end{remark}

The Neumann problem in its weakest form is to find a criterion for the closedness of $R(\square_{\partial\bar{\partial}}). $\\
\begin{lemma}\item 
	\textnormal{
	Let $$\square_{\partial\bar{\partial}}: L^2_{p,q}(\Omega)/ ker(\square_{\partial\bar{\partial}})\longrightarrow L^2_{p,q}(\Omega)/ ker(\square_{\partial\bar{\partial}}). $$
	If $R(\square_{\partial\bar{\partial}})$ is closed, $\square_{\partial\bar{\partial}}$ is invertible} on  $L^2_{p,q}(\Omega)/ ker(\square_{\partial\bar{\partial}}). $
\end{lemma}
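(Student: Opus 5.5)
We will use the self-adjointness and closedness of $\square_{\partial\bar{\partial}}$ from Lemma \ref{DS}, together with the Hodge decomposition of the preceding Remark. The idea is to identify the quotient $L^2_{p,q}(\Omega)/ker(\square_{\partial\bar{\partial}})$ isometrically with the orthogonal complement $ker(\square_{\partial\bar{\partial}})^\perp$. Since $\square_{\partial\bar{\partial}}$ is closed, its kernel is a closed subspace. So the quotient map restricted to $ker(\square_{\partial\bar{\partial}})^\perp$ is a unitary isomorphism onto the quotient, and we may work there.

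By self-adjointness, $ker(\square_{\partial\bar{\partial}})^\perp=\overline{R(\square_{\partial\bar{\partial}})}$, which equals $R(\square_{\partial\bar{\partial}})$ under the closed-range hypothesis. Hence the Remark sharpens to
$$L^2_{p,q}(\Omega)=R(\square_{\partial\bar{\partial}})\oplus ker(\square_{\partial\bar{\partial}}).$$
The induced operator on the quotient is well defined, since $\square_{\partial\bar{\partial}}$ vanishes on the kernel. Concretely, it is the restriction
$$\tilde\square:=\square_{\partial\bar{\partial}}\big|_{Dom(\square_{\partial\bar{\partial}})\cap ker(\square_{\partial\bar{\partial}})^\perp}\colon Dom(\square_{\partial\bar{\partial}})\cap R(\square_{\partial\bar{\partial}})\longrightarrow R(\square_{\partial\bar{\partial}}).$$

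\textbf{Injectivity.} If $\tilde\square u=0$ with $u\perp ker(\square_{\partial\bar{\partial}})$, then $u$ lies in $ker(\square_{\partial\bar{\partial}})\cap ker(\square_{\partial\bar{\partial}})^\perp$, so $u=0$.

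\textbf{Surjectivity.} Let $f\in R(\square_{\partial\bar{\partial}})$, say $f=\square_{\partial\bar{\partial}}v$. Decompose $v=v_0+v_1$ with $v_0\in ker(\square_{\partial\bar{\partial}})$ and $v_1\perp ker(\square_{\partial\bar{\partial}})$. The kernel is contained in the domain, so $v_1=v-v_0\in Dom(\square_{\partial\bar{\partial}})$, and $\tilde\square v_1=f$. Thus $\tilde\square$ is a bijection, and we set $N_{\partial\bar{\partial}}:=\tilde\square^{-1}$. This inverse satisfies $\square_{\partial\bar{\partial}}N_{\partial\bar{\partial}}=Id$ on $R(\square_{\partial\bar{\partial}})$ and $N_{\partial\bar{\partial}}\square_{\partial\bar{\partial}}=Id$ on $Dom(\square_{\partial\bar{\partial}})\cap ker(\square_{\partial\bar{\partial}})^\perp$.

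\textbf{Boundedness (the main step).} Invertibility should include that the inverse is bounded. The graph of $\tilde\square$ is closed, because $\square_{\partial\bar{\partial}}$ is closed and $R(\square_{\partial\bar{\partial}})$ is closed. Hence the graph of $\tilde\square^{-1}$ is closed in $R(\square_{\partial\bar{\partial}})\times R(\square_{\partial\bar{\partial}})$. Since $\tilde\square^{-1}$ is everywhere defined on the Hilbert space $R(\square_{\partial\bar{\partial}})$, the closed graph theorem gives a constant $k$ with $\|N_{\partial\bar{\partial}}f\|\le k\|f\|$.

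Equivalently, one can argue directly: closed range of a closed, densely defined operator yields an estimate
$$\|u\|\le k\|\square_{\partial\bar{\partial}}u\|\quad\text{for } u\in Dom(\square_{\partial\bar{\partial}})\cap ker(\square_{\partial\bar{\partial}})^\perp,$$
by the open mapping theorem applied to $\tilde\square$ viewed on its graph-normed domain. Either way, this bound is the only non-formal input. Everything else reduces to orthogonal-complement bookkeeping.
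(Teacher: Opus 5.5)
Your proof is correct, but it reaches the conclusion by a different route than the paper.

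\textbf{The paper's route.} It goes through an a priori estimate and duality. From the closed-range hypothesis it invokes Lemma 4.1.1 of Chen--Shaw to get $\|f\|\le C\|\square_{\partial\bar{\partial}}f\|$ on $Dom(\square_{\partial\bar{\partial}})\cap\overline{R(\square_{\partial\bar{\partial}})}$. It then produces a solution of $\square_{\partial\bar{\partial}}u=f$ by extending the functional $\square_{\partial\bar{\partial}}\varphi\mapsto\langle\varphi,f\rangle$ (Hahn--Banach) and representing it (Riesz).

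\textbf{Your route.} You identify the quotient with $\ker(\square_{\partial\bar{\partial}})^\perp=\overline{R(\square_{\partial\bar{\partial}})}=R(\square_{\partial\bar{\partial}})$, so surjectivity becomes pure bookkeeping. You then get the bounded inverse from the closed graph theorem. Your ``equivalently'' remark is exactly the paper's Lemma 4.1.1 step, which is itself a closed-graph/open-mapping fact.

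\textbf{What each buys.} Your version shows that, once closed range is assumed, the Hahn--Banach/Riesz existence step is redundant. It also checks that the induced operator on the quotient is well defined, which the paper skips. The paper's duality template is the one that matters when closed range is not given but must be derived from an independently proved estimate. The estimate it names is also what is reused for property (6) of Theorem \ref{P}, though your closed-graph constant serves equally well.

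\textbf{A shared dependency.} Both arguments need genuine self-adjointness, not just symmetry. You use it for $\ker^\perp=\overline{R}$. The paper's duality step, done carefully, only yields $\square_{\partial\bar{\partial}}^{\ast}u=f$, so it needs it too. The computation in Lemma \ref{DS} verifies only symmetry, which gives merely $\overline{R(\square_{\partial\bar{\partial}})}\subset\ker(\square_{\partial\bar{\partial}})^\perp$. Self-adjointness itself follows from $\partial\bar{\partial}\circ\partial\bar{\partial}=0$, by the same Hilbert-complex argument used for the $\bar{\partial}$-Laplacian.
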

 \begin{proof}
 Since $R(\square_{\partial\bar{\partial}})$ is closed, $\square_{\partial\bar{\partial}}$	is bounded on $Dom(\square_{\partial\bar{\partial}}). $ By Lemma $4.1.1$ of {\color{red}\cite{1}}, we have the estimate $$||f||_1\leq C||\square_{\partial\bar{\partial}}f||_2$$ for all $f\in Dom(\square_{\partial\bar{\partial}})\cap\overline{R(\square_{\partial\bar{\partial}})}. $ According to Hahn Banach's theorem, we can extend to $ L^2_{p,q}(\Omega)/ ker(\square_{\partial\bar{\partial}}). $ Using the Riesz representation theorem, there exists a solution $u$ such that $\square_{\partial\bar{\partial}}u=f. $
 \end{proof}

So we can definie a linear operator $$N_{\partial\bar{\partial}}:L_{p,q}^2(\Omega)/ ker(\square_{\partial\bar{\partial}})\longrightarrow L_{p,q}^2(\Omega)/ ker(\square_{\partial\bar{\partial}})$$ which is the inverse of $\square_{\partial\bar{\partial}}$ and verifies the following remark.
\begin{remark}\item\label{DD}
\textnormal{For $u\in ker(\square_{\partial\bar{\partial}})$, we can extend $$N_{\partial\bar{\partial}}:L_{p,q}^2(\Omega)/ ker(\square_{\partial\bar{\partial}})\longrightarrow L_{p,q}^2(\Omega)/ ker(\square_{\partial\bar{\partial}})$$ to $N_{\partial\bar{\partial}}u=0. $ }
\end{remark}
\subsection{Main theorem}
Based on the previous results, we can prove the main result of this section.

\begin{thm} \label{P}\item
\textnormal{Let $\Omega$ be a strictly pseudoconvex bounded starred domain of $\mathbb{C}^n$, \\ $n\geq 2$ and $R(\square_{\partial\bar{\partial}})$ is closed.  For $1\leq p\leq n$ et $1\leq q\leq n, $ there exists an operator in the sense of the  Remark \eqref{DD} denoted by $N_{\partial\bar{\partial}}$ defined as  $$N_{\partial\bar{\partial}}:  L_{p,q}^{2}(\Omega)/ker(\square_{\partial\bar{\partial}})\longrightarrow L_{p,q}^{2}(\Omega)/ker(\square_{\partial\bar{\partial}})$$ verifying the following properties:}
\begin{itemize}\item[(1)]
$R(N_{\partial\bar{\partial}})\subset Dom(\square_{\partial\bar{\partial}})\cap ker(\square_{\partial\bar{\partial}})^\perp$
\item[(2)]
$N_{\partial\bar{\partial}}\square_{\partial\bar{\partial}}=\square_{\partial\bar{\partial}}N_{\partial\bar{\partial}}=Id$ on $R(\square_{\partial\bar{\partial}})$
\item[(3)]
For all $f\in L^2_{p,q}(\Omega),$ $$f=f_1\oplus\partial\bar{\partial}(\partial\bar{\partial})^\ast N_{\partial\bar{\partial}}f_2\oplus (\partial\bar{\partial})^\ast \partial\bar{\partial} N_{\partial\bar{\partial}}f_2$$ where $f_1\in ker(\square_{\partial\bar{\partial}})$ and $f_2\in R(\square_{\partial\bar{\partial}})$
\item[(4)]
$\partial\bar{\partial}N_{p,q}=N_{p+1,q+1}\partial\bar{\partial}$ on $Dom(\partial\bar{\partial})$ with \\$1\leq p\leq n-1, $ $1\leq q\leq n-1$
\item[(5)]
$(\partial\bar{\partial})^\ast N_{p,q}=N_{p-1,q-1}(\partial\bar{\partial})^\ast $ on $Dom(\partial\bar{\partial})^\ast $ with $2\leq p\leq n, $ $2\leq q\leq n$
\item[(6)]
For $f\in L^2_{p,q}(\Omega), $
\textnormal{there exists a constant $k\geq 0$ such that:}
\end{itemize}
\textnormal{
$$||N_{\partial\bar{\partial}}f||\leq k||f|| \mbox  {  and }||\partial\bar{\partial}N_{p,q}f||\leq \sqrt{k}||f ||\mbox {  and  } ||(\partial\bar{\partial})^\ast N_{p,q}f||\leq \sqrt{k}||f||. $$}
\end{thm}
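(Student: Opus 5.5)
The plan is to argue abstractly, treating $T=\partial\bar{\partial}$ as a closed densely defined operator between Hilbert spaces, and to take the closedness of $R(\square_{\partial\bar{\partial}})$ as the key input. The operator $N_{\partial\bar{\partial}}$ comes from the preceding Lemma on invertibility of $\square_{\partial\bar{\partial}}$ on $L^2_{p,q}(\Omega)/ker(\square_{\partial\bar{\partial}})$. Concretely, by the Remark on the Hodge decomposition and the closed range hypothesis,
$$L^2_{p,q}(\Omega)=R(\square_{\partial\bar{\partial}})\oplus ker(\square_{\partial\bar{\partial}}).$$
On $R(\square_{\partial\bar{\partial}})$, set $N_{\partial\bar{\partial}}f$ to be the unique $u\in Dom(\square_{\partial\bar{\partial}})\cap ker(\square_{\partial\bar{\partial}})^\perp$ with $\square_{\partial\bar{\partial}}u=f$. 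On $ker(\square_{\partial\bar{\partial}})$, set $N_{\partial\bar{\partial}}=0$, as in Remark \eqref{DD}.

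Uniqueness holds because two such solutions differ by an element of $ker(\square_{\partial\bar{\partial}})\cap ker(\square_{\partial\bar{\partial}})^\perp=\{0\}$. This gives (1) immediately. It also gives $\square_{\partial\bar{\partial}}N_{\partial\bar{\partial}}=Id$ on $R(\square_{\partial\bar{\partial}})$. For $N_{\partial\bar{\partial}}\square_{\partial\bar{\partial}}u=u$, I would use self-adjointness from Lemma \ref{DS}: it gives $ker(\square_{\partial\bar{\partial}})^\perp=\overline{R(\square_{\partial\bar{\partial}})}=R(\square_{\partial\bar{\partial}})$, so the identity holds on $R(\square_{\partial\bar{\partial}})\cap Dom(\square_{\partial\bar{\partial}})$, which proves (2).

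\textbf{Estimates (6).} I would use the closed range theorem: closed range of the self-adjoint $\square_{\partial\bar{\partial}}$ gives a constant $k$ with $\|u\|\le k\|\square_{\partial\bar{\partial}}u\|$ for $u\in Dom(\square_{\partial\bar{\partial}})\cap ker(\square_{\partial\bar{\partial}})^\perp$. This is the estimate quoted from Lemma 4.1.1 of \cite{1} in the previous proof. Applied to $u=N_{\partial\bar{\partial}}f$, it gives $\|N_{\partial\bar{\partial}}f\|\le k\|f\|$. The identity computed in the proof of Lemma \ref{DS} then gives
$$\|\partial\bar{\partial}N f\|^2+\|(\partial\bar{\partial})^\ast Nf\|^2=\langle f,Nf\rangle\le k\|f\|^2,$$
which yields the two square-root bounds.

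\textbf{Decomposition (3).} Write $f=f_1+f_2$ with $f_1\in ker(\square_{\partial\bar{\partial}})$ and $f_2\in R(\square_{\partial\bar{\partial}})$. Then $f_2=\square_{\partial\bar{\partial}}N f_2$ expands into the two stated terms. Orthogonality of $\partial\bar{\partial}(\cdot)$ and $(\partial\bar{\partial})^\ast(\cdot)$ follows from $\partial\bar{\partial}\partial\bar{\partial}=0$, since $\partial^2=0$. Orthogonality of both terms to $f_1$ follows because $ker(\square_{\partial\bar{\partial}})=ker(\partial\bar{\partial})\cap ker((\partial\bar{\partial})^\ast)$, again by the norm identity.

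\textbf{Commutation (4) and (5).} For $f\in Dom(\partial\bar{\partial})$, I would check that $v=\partial\bar{\partial}N_{p,q}f$ lies in $Dom(\square_{\partial\bar{\partial}})\cap ker^\perp$ and satisfies $\square_{\partial\bar{\partial}}v=\partial\bar{\partial}f$. The point is that on the range of $\partial\bar{\partial}$ only the $\partial\bar{\partial}(\partial\bar{\partial})^\ast$ part acts, and $\square\,\partial\bar{\partial}=\partial\bar{\partial}\,\square$ holds on the relevant domain. Uniqueness of $N_{p+1,q+1}$ then gives (4), and (5) follows symmetrically.

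The main obstacle is the domain bookkeeping in (4)–(5). I need $(\partial\bar{\partial})^\ast\partial\bar{\partial}Nf$ to lie in $Dom(\partial\bar{\partial})$ with $\partial\bar{\partial}(\partial\bar{\partial})^\ast\partial\bar{\partial}Nf=\partial\bar{\partial}f$. I would first prove it for $f$ in the dense set $R(\square_{\partial\bar{\partial}})\cap Dom(\partial\bar{\partial})$, using the decomposition (3) for $f$ and the fact that $(\partial\bar{\partial})^\ast\partial\bar{\partial}Nf_2\in ker(\partial\bar{\partial})^\perp$. I would then pass to the general case by closedness of $\partial\bar{\partial}$ and boundedness of $N$. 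The hypotheses that $\Omega$ is strictly pseudoconvex and starred would enter only through the assumed closed range; I would use them to justify $R(\partial\bar{\partial})$ closed via Laville's $L^2$ existence theorem if needed.
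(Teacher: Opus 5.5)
Your proposal follows the paper's proof. $N_{\partial\bar{\partial}}$ is the inverse of $\square_{\partial\bar{\partial}}$ on $R(\square_{\partial\bar{\partial}})=\ker(\square_{\partial\bar{\partial}})^\perp$, extended by $0$ on the kernel. Property (3) is the splitting $f=f_1+\square_{\partial\bar{\partial}}Nf_2$. Properties (4)--(5) come from applying $\partial\bar{\partial}$, resp.\ $(\partial\bar{\partial})^\ast$, to that splitting and inverting in the neighbouring bidegree. Property (6) is the closed-range estimate plus $\|\partial\bar{\partial}Nf\|^2+\|(\partial\bar{\partial})^\ast Nf\|^2=\langle f,Nf\rangle$. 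Your explicit check that $v=\partial\bar{\partial}N_{p,q}f$ lies in $Dom(\square_{\partial\bar{\partial}})\cap\ker(\square_{\partial\bar{\partial}})^\perp$, followed by uniqueness, is the rigorous form of the paper's appeal to $N\square_{\partial\bar{\partial}}=Id$. That identity is only available on $R(\square_{\partial\bar{\partial}})\cap Dom(\square_{\partial\bar{\partial}})$.

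One step of your domain bookkeeping fails as described and should be replaced. The set $R(\square_{\partial\bar{\partial}})\cap Dom(\partial\bar{\partial})$ is not dense in $L^2_{p,q}(\Omega)$ when $\ker(\square_{\partial\bar{\partial}})\neq\{0\}$, which is exactly the case this theorem addresses. It sits inside the proper closed subspace $R(\square_{\partial\bar{\partial}})$, so a form with nonzero harmonic part is not a limit of such forms, and no closedness/boundedness argument can reach it.

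No limit is needed. Since $f_1\in\ker(\square_{\partial\bar{\partial}})\subset\ker(\partial\bar{\partial})$, you have $f_2=f-f_1\in Dom(\partial\bar{\partial})$, $\partial\bar{\partial}f=\partial\bar{\partial}f_2$, and $N_{p,q}f=N_{p,q}f_2$; this is how the paper concludes. Then in $f_2=\partial\bar{\partial}(\partial\bar{\partial})^\ast Nf_2+(\partial\bar{\partial})^\ast\partial\bar{\partial}Nf_2$ the first term lies in $R(\partial\bar{\partial})\subset\ker(\partial\bar{\partial})$. This gives at once $(\partial\bar{\partial})^\ast\partial\bar{\partial}Nf_2\in Dom(\partial\bar{\partial})$ and $\partial\bar{\partial}(\partial\bar{\partial})^\ast\partial\bar{\partial}Nf_2=\partial\bar{\partial}f$. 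What does the work is $\partial\bar{\partial}\partial\bar{\partial}=0$, not the fact $(\partial\bar{\partial})^\ast\partial\bar{\partial}Nf_2\in\ker(\partial\bar{\partial})^\perp$ that you cite.

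For (5), argue the same way using $f_1\in\ker((\partial\bar{\partial})^\ast)$ and the fact that $(\partial\bar{\partial})^\ast\partial\bar{\partial}Nf_2$ is annihilated by $(\partial\bar{\partial})^\ast$.

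Finally, invoking $N_{p+1,q+1}$ and $N_{p-1,q-1}$ requires the closed-range hypothesis in those bidegrees as well. You and the paper both assume this tacitly.
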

\begin{proof}
$$\square_{\partial\bar{\partial}}:Dom(\square_{\partial\bar{\partial}})\subset L_{p,q}^{2}(\Omega)/ker(\square_{\partial\bar{\partial}})\longrightarrow L_{p,q}^{2}(\Omega)/ker(\square_{\partial\bar{\partial}}). $$
By definition, the operator $N_{ \partial\bar{\partial}}$ is the inverse of the operator $\square_{\partial\bar{\partial}}$ with $$N_{\partial\bar{\partial}}: L_{p,q}^{2}(\Omega)/ker(\square_{\partial\bar{\partial}})\longrightarrow Dom(\square_{\partial\bar{\partial}})\subset L_{p,q}^{2}(\Omega)/ker(\square_{\partial\bar{\partial}}). $$ The intersection of $Dom(\square_{\partial\bar{\partial}})$ with $ker(\square_{\partial\bar{\partial}})^\perp$ is due to the Remark \eqref{DD}. 
Therefore the properties $(1)$ and $(2)$  are verified i.e $R(N_{\partial\bar{\partial}})\subset Dom(\square_{\partial\bar{\partial}})\cap ker(\square_{\partial\bar{\partial}})^\perp$ and $N_{\partial\bar{\partial}}\square_{\partial\bar{\partial}}=\square_{\partial\bar{\partial}}N_{\partial\bar{\partial}}=Id$ on $R(\square_{\partial\bar{\partial}}). $\\
Let's show the property $(3)$ i.e for all $f\in L^2_{p,q}(\Omega),$ $$f=f_1\oplus\partial\bar{\partial}(\partial\bar{\partial})^\ast N_{\partial\bar{\partial}}f_2\oplus (\partial\bar{\partial})^\ast \partial\bar{\partial} N_{\partial\bar{\partial}}f_2$$ where $f_1\in ker(\square_{\partial\bar{\partial}})$ and $f_2\in R(\square_{\partial\bar{\partial}}). $\\
Since $R(\square_{\partial\bar{\partial}})$ is closed, the Hodge decomposition gives us the following relationship:
$$L^2_{p,q}(\Omega)=ker(\square_{\partial\bar{\partial}})\oplus R(\square_{\partial\bar{\partial}}) . $$
So $\forall\;\;f\in L^2_{p,q}(\Omega)$, $f$ is written as:
$f=f_1\oplus f_2 $ where $f_1\in ker(\square_{\partial\bar{\partial}})$ et $f_2\in R(\square_{\partial\bar{\partial}}). $ So $f$ can be written as 
\begin{equation}\label{B}
f=f_1\oplus\partial\bar{\partial}(\partial\bar{\partial})^\ast N_{p,q}f_2\oplus (\partial\bar{\partial})^\ast\partial\bar{\partial} N_{p,q}f_2. 
\end{equation}
Let's show the property $(4)$ i.e $\partial\bar{\partial}N_{p,q}=N_{p+1,q+1}\partial\bar{\partial}$ on $Dom(\partial\bar{\partial})$ with $1\leq p\leq n-1, $ $1\leq q\leq n-1. $\\
Let $f\in Dom(\partial\bar{\partial}). $ \\
Computing by $\partial\bar{\partial}$ in the relation \eqref{B}, we have:\\
$$\partial\bar{\partial}f=\partial\bar{\partial}f_1\oplus\partial\bar{\partial}\partial\bar{\partial}(\partial\bar{\partial})^\ast N_{p,q}f_2\oplus \partial\bar{\partial}(\partial\bar{\partial})^\ast\partial\bar{\partial} N_{p,q}f_2. $$
Or $$\partial\bar{\partial}f_1=0$$ and $$\partial\bar{\partial}\partial\bar{\partial}(\partial\bar{\partial})^\ast N_{p,q}f=0. $$
So:
$$\partial\bar{\partial}f=\partial\bar{\partial}(\partial\bar{\partial})^\ast\partial\bar{\partial} N_{p,q}f_2. $$
Thus, 
\begin{align*}
N_{p+1, q+1}\partial\bar{\partial}f&=N_{p+1, q+1}\partial\bar{\partial}(\partial\bar{\partial})^\ast\partial\bar{\partial} N_{p,q}f_2\\
&=N_{p+1, q+1}[\partial\bar{\partial}(\partial\bar{\partial})^\ast+(\partial\bar{\partial})^\ast\partial\bar{\partial}]\partial\bar{\partial} N_{p,q}f_2
\end{align*}
Using the property $(2)$, $$N_{p+1, q+1}[\partial\bar{\partial}(\partial\bar{\partial})^\ast+(\partial\bar{\partial})^\ast\partial\bar{\partial}]=I. $$
Therefore, $$N_{p+1, q+1}\partial\bar{\partial}f=\partial\bar{\partial} N_{p,q}f_2. $$ As $N_{p,q}f_1=0$ to the Remark \eqref{DD}. So $$N_{p+1, q+1}\partial\bar{\partial}f=\partial\bar{\partial} N_{p,q}f_2+\partial\bar{\partial} N_{p,q}f_1. $$ Therefore $$N_{p+1, q+1}\partial\bar{\partial}f=\partial\bar{\partial} N_{p,q}f$$ $\forall\;\;f\in Dom(\partial\bar{\partial}). $\\
This proves the property  $(4)$ i.e $$N_{p+1, q+1}\partial\bar{\partial}=\partial\bar{\partial} N_{p,q}. $$
Let's show the property $(5)$ i.e $(\partial\bar{\partial})^\ast N_{p,q}=N_{p-1,q-1}(\partial\bar{\partial})^\ast $ on $Dom(\partial\bar{\partial})^\ast $ with $2\leq p\leq n, $ $2\leq q\leq n. $\\
Let $f\in Dom(\partial\bar{\partial})^\ast. $ \\
Computing by $(\partial\bar{\partial})^\ast$ in the relation \eqref{B}, we have:\\
$$(\partial\bar{\partial})^\ast f=(\partial\bar{\partial})^\ast f_1\oplus(\partial\bar{\partial})^\ast\partial\bar{\partial}(\partial\bar{\partial})^\ast N_{p,q}f_2\oplus (\partial\bar{\partial})^\ast(\partial\bar{\partial})^\ast\partial\bar{\partial}  N_{p,q}f_2. $$
Since $$(\partial\bar{\partial})^\ast f_1=0$$ and $$(\partial\bar{\partial})^\ast(\partial\bar{\partial})^\ast\partial\bar{\partial} N_{p,q}f=0. $$
So:
$$(\partial\bar{\partial})^\ast f=(\partial\bar{\partial})^\ast(\partial\bar{\partial})(\partial\bar{\partial})^\ast N_{p,q}f. $$
Thus, 
\begin{align*}
N_{p-1, q-1}(\partial\bar{\partial})^\ast f&=N_{p-1, q-1}(\partial\bar{\partial})^\ast\partial\bar{\partial}(\partial\bar{\partial})^\ast N_{p,q}f_2\\
&=N_{p-1, q-1}[\partial\bar{\partial}(\partial\bar{\partial})^\ast+(\partial\bar{\partial})^\ast\partial\bar{\partial}](\partial\bar{\partial} )^\ast N_{p,q}f_2
\end{align*}
Using the property $(2)$ of Theorem \eqref{P}, we have $$N_{p-1, q-1}[\partial\bar{\partial}(\partial\bar{\partial})^\ast+(\partial\bar{\partial})^\ast\partial\bar{\partial}]=I $$
Therefore, $$N_{p-1, q-1}(\partial\bar{\partial})^\ast f=(\partial\bar{\partial})^\ast N_{p,q}f_2. $$ As $N_{p,q}f_1=0$ according to the Remark \eqref{DD}. So $$N_{p-1, q-1}(\partial\bar{\partial})^\ast f=(\partial\bar{\partial})^\ast N_{p,q}f_2+(\partial\bar{\partial})^\ast N_{p,q}f_1. $$ So $$N_{p-1, q-1}(\partial\bar{\partial})^\ast f=(\partial\bar{\partial})^\ast N_{p,q}f$$ $\forall\;\;f\in Dom(\partial\bar{\partial})^\ast. $\\
This proves the property $(5)$ i.e $$N_{p+1, q+1}(\partial\bar{\partial})^\ast=(\partial\bar{\partial} )^\ast N_{p,q}. $$
Finally, let's show the property $(6)$ i.e $$||N_{\partial\bar{\partial}}f||\leq k||f|| \mbox  {  and  }||\partial\bar{\partial}N_{p,q}f||\leq \sqrt{k}||f ||\mbox {  and  } ||(\partial\bar{\partial})^\ast N_{p,q}f||\leq \sqrt{k}||f||. $$
According to the property $(1)$ i.e $R(N_{\partial\bar{\partial}})\subset Dom(\square_{\partial\bar{\partial}})\cap ker(\square_{\partial\bar{\partial}})^\perp. $\\
Since $R(\square_{\partial\bar{\partial}})$ is closed, according to the lemme $4.1.1$ of {\red\cite{1}}, we have the following estimate 
\begin{equation}\label{ddd}
 ||f||\leq k|| \square_{\partial\bar{\partial}}f||. 
\end{equation}
Since $f\in Dom(\square_{\partial\bar{\partial}})$, so taking  $f=N_{p,q}f$ in the relation \eqref{ddd}, we have:
\begin{align*}
||N_{p,q}f||&\leq k||\square_{\partial\bar{\partial}}N_{p,q}f||\\
&\leq k||f||.
\end{align*}
Let's computing $||\partial\bar{\partial}N_{p,q}f||^2+||(\partial\bar{\partial})^\ast N_{p,q}f||^2. $\\
\begin{align*}
||\partial\bar{\partial}N_{p,q}f||^2+||(\partial\bar{\partial})^\ast N_{p,q}f||^2&=<\partial\bar{\partial}N_{p,q}f,\partial\bar{\partial}N_{p,q}f>+<(\partial\bar{\partial})^\ast N_{p,q},(\partial\bar{\partial})^\ast N_{p,q}f>\\
&=<N_{p,q}f, (\partial\bar{\partial})^\ast\partial\bar{\partial}N_{p,q}f>+<N_{p,q}f,\partial\bar{\partial}(\partial\bar{\partial})^\ast N_{p,q}f>\\
&=<N_{p,q}f,(\partial\bar{\partial})^\ast\partial\bar{\partial}N_{p,q}f+\partial\bar{\partial}(\partial\bar{\partial})^\ast N_{p,q}f>\\
&=<N_{p,q}f,((\partial\bar{\partial})^\ast\partial\bar{\partial}+\partial\bar{\partial}(\partial\bar{\partial})^\ast) N_{p,q}f>\\
&=<N_{p,q}f,\square_{\partial\bar{\partial}} N_{p,q}f>\\
&=<N_{p,q}f,f>.
\end{align*}
	So $$||\partial\bar{\partial}N_{p,q}f||^2+||(\partial\bar{\partial})^\ast N_{p,q}f||^2=<N_{p,q}f,f>. $$
Using Cauchy-schawtz inequality, we have $$||\partial\bar{\partial}N_{p,q}f||^2+||(\partial\bar{\partial})^\ast N_{p,q}f||^2\leq ||N_{p,q}f||||f||. $$
	Since $$||N_{p,q}f||\leq k||f||. $$
	So $$||\partial\bar{\partial}N_{p,q}f||^2+||(\partial\bar{\partial})^\ast N_{p,q}f||^2\leq k||f||^2. $$
	Therefore, $$||\partial\bar{\partial}N_{p,q}f||^2\leq k||f||^2 \mbox  {    and   } ||(\partial\bar{\partial})^\ast N_{p,q}f||^2\leq k||f||^2. $$
	Thus, $$||\partial\bar{\partial}N_{p,q}f||\leq \sqrt{k}||f|| \mbox {    and   } ||(\partial\bar{\partial})^\ast N_{p,q}f||\leq \sqrt{k}||f||. $$
	 \end{proof} 
	 \\
As a consequence of the Theorem \eqref{P}, we have:
\begin{coro}\item\label{diez}
\textnormal{Under the same assumptions as the  Theorem \eqref{P}, for any \\ $\alpha\in L^2_{p,q}(\Omega)/ker(\square_{\partial\bar{\partial}})\cap ker(d)$ such that $\alpha=\alpha_1+\alpha_2$ with $\alpha_1\in ker(\square_{\partial\bar{\partial}})$ and $\alpha_2\in R(\square_{\partial\bar{\partial}})$. For $f=\alpha-\alpha_1$ there is a differential form $u\in L^2_{p-1, q-1}(\Omega)$ such that $\partial\bar{\partial}u=f. $} 
\end{coro}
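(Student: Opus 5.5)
The plan is to produce $u$ explicitly from the operator $N_{\partial\bar{\partial}}$ of Theorem \eqref{P}, in the same way that $\bar{\partial}^\ast N$ gives the canonical solution for $\bar{\partial}$. Since $f=\alpha-\alpha_1=\alpha_2\in R(\square_{\partial\bar{\partial}})$, property (3) of Theorem \eqref{P}, applied with $f_1=0$ and $f_2=f$, gives
\begin{equation*}
f=\partial\bar{\partial}(\partial\bar{\partial})^\ast N_{\partial\bar{\partial}}f+(\partial\bar{\partial})^\ast\partial\bar{\partial}N_{\partial\bar{\partial}}f .
\end{equation*}
The candidate is $u=(\partial\bar{\partial})^\ast N_{\partial\bar{\partial}}f$. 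By property (6), $\|u\|\leq\sqrt{k}\,\|f\|$, so $u\in L^2_{p-1,q-1}(\Omega)$. By property (1), $N_{\partial\bar{\partial}}f\in Dom(\square_{\partial\bar{\partial}})$, so $u\in Dom(\partial\bar{\partial})$. It then remains to show that the second term $(\partial\bar{\partial})^\ast\partial\bar{\partial}N_{\partial\bar{\partial}}f$ vanishes.

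For this I would use the hypothesis $\alpha\in\ker(d)$. Since $\alpha_1\in\ker(\square_{\partial\bar{\partial}})$, the identity $\langle\square_{\partial\bar{\partial}}g,g\rangle=\|\partial\bar{\partial}g\|^2+\|(\partial\bar{\partial})^\ast g\|^2$ from the proof of Lemma \ref{DS} gives $\partial\bar{\partial}\alpha_1=0$. Together with $d\alpha=0$, which implies $\partial\alpha=\bar{\partial}\alpha=0$ and hence $\partial\bar{\partial}\alpha=0$, this yields $\partial\bar{\partial}f=0$. Next, property (4) gives
\begin{equation*}
\partial\bar{\partial}N_{p,q}f=N_{p+1,q+1}\partial\bar{\partial}f=0 .
\end{equation*}
So $(\partial\bar{\partial})^\ast\partial\bar{\partial}N_{\partial\bar{\partial}}f=0$, and the decomposition reduces to $f=\partial\bar{\partial}u$.

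The main obstacle is justifying that $\partial\bar{\partial}\alpha_1=0$ and $\partial\bar{\partial}f=0$ hold in the appropriate $L^2$ and distributional sense, and that property (4) applies to $f$, since $f$ must lie in $Dom(\partial\bar{\partial})$. I would first check $f\in Dom(\partial\bar{\partial})$ directly, which is immediate because $\partial\bar{\partial}f=0$. If invoking (4) causes trouble, I would instead argue by orthogonality. Write $v=\partial\bar{\partial}N_{\partial\bar{\partial}}f$. Then
\begin{equation*}
\|v\|^2=\langle(\partial\bar{\partial})^\ast v,N_{\partial\bar{\partial}}f\rangle ,
\end{equation*}
and $(\partial\bar{\partial})^\ast v=f-\partial\bar{\partial}u$. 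Both $f$ and $\partial\bar{\partial}u$ are $\partial\bar{\partial}$-closed, while $(\partial\bar{\partial})^\ast v\in R((\partial\bar{\partial})^\ast)$, which is orthogonal to $\ker(\partial\bar{\partial})$. Hence $(\partial\bar{\partial})^\ast v=0$, and this again gives $f=\partial\bar{\partial}u$.
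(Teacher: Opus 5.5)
Your proposal is correct and follows the paper's proof. You apply property (3) to $\alpha_2$, take $u=(\partial\bar{\partial})^\ast N_{\partial\bar{\partial}}\alpha_2$ (this equals the paper's $(\partial\bar{\partial})^\ast N_{p,q}\alpha$, since $N_{\partial\bar{\partial}}\alpha_1=0$ by Remark \ref{DD}), and remove $(\partial\bar{\partial})^\ast\partial\bar{\partial}N_{\partial\bar{\partial}}\alpha_2$ via property (4). Your derivation of $\partial\bar{\partial}\alpha_2=0$ from $\partial\bar{\partial}\alpha=0$ and $\partial\bar{\partial}\alpha_1=0$ is more careful than the paper's bare claim that $\alpha_2$ is $d$-closed, which need not hold since $\alpha_1$ is only known to be $\partial\bar{\partial}$- and $(\partial\bar{\partial})^\ast$-closed. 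Your orthogonality fallback is also valid, and it avoids property (4) altogether.
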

\begin{proof}
According to the property $(3)$ of Theorem \eqref{P},  we have $$\alpha=\alpha_1+\partial\bar{\partial}(\partial\bar{\partial})^\ast N_{p,q}\alpha_2+(\partial\bar{\partial})^\ast\partial\bar{\partial}N_{p,q}\alpha_2. $$
So
$$f=\alpha-\alpha_1=\partial\bar{\partial}(\partial\bar{\partial})^\ast N_{p,q}\alpha_2+(\partial\bar{\partial})^\ast\partial\bar{\partial}N_{p,q}\alpha_2. $$
Since $\alpha$ is $d$-closed, $\alpha_2$ is $d$-closed. Therefore, by using the property $(4)$ of Theorem \eqref{P} $$(\partial\bar{\partial})^\ast\partial\bar{\partial}N_{p,q}\alpha_2=0. $$
Donc $$f=\alpha-\alpha_1=\partial\bar{\partial}(\partial\bar{\partial})^\ast N_{p,q}\alpha_2. $$
Since $N_{p,q}\alpha_1=0,$ then $$f=\alpha-\alpha_1=\partial\bar{\partial}(\partial\bar{\partial})^\ast N_{p,q}\alpha_2+ \partial\bar{\partial}(\partial\bar{\partial})^\ast N_{p,q}\alpha_1. $$
Thus
$$f=\partial\bar{\partial}(\partial\bar{\partial})^\ast N_{p,q}\alpha. $$
Consequently, the canonical solution is given by  $$u=(\partial\bar{\partial})^\ast N_{p,q}\alpha. $$


\end{proof}
\section{The inverse of $\square_{\partial\bar{\partial}}=\partial\bar{\partial}(\partial\bar{\partial})^\ast+(\partial\bar{\partial})^\ast\partial\bar{\partial}$ on $L_{p,q}^{2}(\Omega)\cap ker(d). $}
\textnormal{For $\Omega\subset\mathbb{C}^n$ be a strictly  pseudoconvex  bounded  starred domain, $\square_{\partial\bar{\partial}}$ is not injective in the space $L^2_{p,q}(\Omega). $ Therefore, to achieve our objective, we will restrict our study to $L^2_{p,q}(\Omega)\cap ker(d)$ and $\square_{\partial\bar{\partial}}$ is defined as follows: $$\square_{\partial\bar{\partial}}: L_{p,q}^{2}(\Omega)\cap ker(d)\longrightarrow L_{p,q}^{2}(\Omega)\cap ker(d). $$}
\subsection{Some Lemmas}
\begin{lemme}\item\label{OS}
\textnormal{Let $\Omega$ be a strictly pseudoconvex bounded starred domain of $\mathbb{C}^n. $ Then $R(\square_{\partial\bar{\partial}})\cap ker(d)$ is closed and $ker(\square_{\partial\bar{\partial}})=\lbrace 0 \rbrace. $}
\end{lemme}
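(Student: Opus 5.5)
The lemma makes two claims: $ker(\square_{\partial\bar{\partial}})=\{0\}$ on $L^2_{p,q}(\Omega)\cap ker(d)$, and $R(\square_{\partial\bar{\partial}})\cap ker(d)$ is closed. I will prove the kernel statement first and then obtain closedness from an a priori estimate, using the $L^2$ existence theorem for $\partial\bar{\partial}$ of Laville on bounded pseudoconvex starred domains.

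\emph{Trivial kernel.} Let $f\in Dom(\square_{\partial\bar{\partial}})\cap ker(d)$ with $\square_{\partial\bar{\partial}}f=0$. The computation in the proof of Lemma \ref{DS} gives
$$0=<\square_{\partial\bar{\partial}}f,f>=||\partial\bar{\partial}f||^2+||(\partial\bar{\partial})^\ast f||^2,$$
so $\partial\bar{\partial}f=0$ and $(\partial\bar{\partial})^\ast f=0$. Since $f$ is $d$-closed of bidegree $(p,q)$ with $p,q\geq1$, and $\Omega$ is pseudoconvex and starred, Laville's theorem gives $u\in L^2_{p-1,q-1}(\Omega)$ with $\partial\bar{\partial}u=f$. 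Then
$$||f||^2=<\partial\bar{\partial}u,f>=<u,(\partial\bar{\partial})^\ast f>=0,$$
so $f=0$. The step $<\partial\bar{\partial}u,f>=<u,(\partial\bar{\partial})^\ast f>$ requires $u\in Dom(\partial\bar{\partial})$ in the Hilbert-space sense. This holds because $\partial\bar{\partial}u=f\in L^2$, which is exactly the definition of $Dom(\partial\bar{\partial})$ used in the paper.

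\emph{Closed range.} I will prove the estimate
$$||f||\leq k\,||\square_{\partial\bar{\partial}}f|| \quad\text{for } f\in Dom(\square_{\partial\bar{\partial}})\cap ker(d).$$
Once this holds, take $\square_{\partial\bar{\partial}}f_n\to g$ with each $f_n$ $d$-closed. The estimate makes $(f_n)$ Cauchy, hence $f_n\to f$. Closedness of $\square_{\partial\bar{\partial}}$ (Lemma \ref{DS}) gives $\square_{\partial\bar{\partial}}f=g$. Finally $ker(d)$ is closed in $L^2$ because $d$ is continuous for distributions, so $f$ is $d$-closed and $g$ lies in the range.

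The estimate comes in two steps.
\begin{itemize}
\item[(i)] I first show $||f||\leq C||(\partial\bar{\partial})^\ast f||$ for $d$-closed $f\in Dom((\partial\bar{\partial})^\ast)$. Write $f=\partial\bar{\partial}u$ with $u$ Laville's solution, which satisfies $||u||\leq C||f||$. Then
$$||f||^2=<u,(\partial\bar{\partial})^\ast f>\leq C||f||\,||(\partial\bar{\partial})^\ast f||.$$
\item[(ii)] Combining (i) with the identity from Lemma \ref{DS},
$$||f||^2\leq C^2||(\partial\bar{\partial})^\ast f||^2\leq C^2<\square_{\partial\bar{\partial}}f,f>\leq C^2||\square_{\partial\bar{\partial}}f||\,||f||,$$
which gives the estimate with $k=C^2$.
\end{itemize}

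The main obstacle is step (i). It needs the \emph{bounded} $L^2$ solution operator, $||u||\leq C||f||$ with $C$ depending only on $\Omega$, not mere existence of a solution. Laville's existence theorem for starred pseudoconvex domains does provide such a bound, from the homotopy formula combined with the $\bar\partial$ and $\partial$ $L^2$ estimates of Hörmander. I would cite it in that quantitative form. If only existence were available, I would recover boundedness from the closed graph / open mapping theorem applied to $\partial\bar{\partial}:Dom(\partial\bar{\partial})\to L^2_{p,q}(\Omega)\cap ker(d)$. That operator is closed, and surjective by Laville's theorem. The open mapping theorem then gives, for each $f$, a preimage $u$ with $||u||\leq C||f||$.
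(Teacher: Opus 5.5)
Your proof is correct, but it reaches the range estimate by a different route from the paper. The kernel argument is the same as the paper's: write the $d$-closed $f$ as $\partial\bar{\partial}u$ by Laville's theorem and conclude from $\|f\|^2=\langle u,(\partial\bar{\partial})^\ast f\rangle=0$. For closedness of the range, both of you end at the a priori estimate $\|f\|\leq k\|\square_{\partial\bar{\partial}}f\|$, but you get there more directly.

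The paper's route runs as follows:
\begin{enumerate}
\item From Laville's bounded solution it deduces that $R(\partial\bar{\partial})$ and $R((\partial\bar{\partial})^\ast)$ are closed.
\item It splits $L^2_{p,q}(\Omega)\cap\ker(d)=R(\partial\bar{\partial})\cap\ker(d)\oplus R((\partial\bar{\partial})^\ast)\cap\ker(d)$ and writes $f=f_1+f_2$.
\item It applies a closed-range estimate to each piece, obtaining $\|f\|^2\leq k(\|\partial\bar{\partial}f\|^2+\|(\partial\bar{\partial})^\ast f\|^2)$.
\item It cites Lemma 4.1.1 of Chen--Shaw to pass from the estimate to closed range.
\end{enumerate}

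You instead get $\|f\|\leq C\|(\partial\bar{\partial})^\ast f\|$ in one line, by pairing $f=\partial\bar{\partial}u$ against $f$. This is exactly the direction of the closed range theorem that is needed. You then check the passage from estimate to closed range by hand, using a Cauchy sequence, closedness of $\square_{\partial\bar{\partial}}$, and closedness of $\ker(d)$.

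Your route also shows that the paper's splitting is degenerate here. A $d$-closed form lies in $\ker(\partial\bar{\partial})$, which is orthogonal to $R((\partial\bar{\partial})^\ast)$. So the summand $R((\partial\bar{\partial})^\ast)\cap\ker(d)$ is $\{0\}$, and only the $(\partial\bar{\partial})^\ast$-estimate carries content. You likewise avoid the paper's assertion $R(\partial\bar{\partial})=\ker(\partial\bar{\partial})$, which is only true after intersecting with $\ker(d)$.

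What the paper's formulation buys is the familiar basic-estimate shape $\|f\|^2\lesssim\|\partial\bar{\partial}f\|^2+\|(\partial\bar{\partial})^\ast f\|^2$ of $\bar{\partial}$-Neumann theory. That is the form one would need off $\ker(d)$.

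Two details you supply that the paper leaves implicit:
\begin{itemize}
\item Your open-mapping justification of $\|u\|\leq C\|f\|$.
\item Your explicit restriction to $p,q\geq 1$. This is necessary: if $p=0$ or $q=0$, then $(\partial\bar{\partial})^\ast$ vanishes on $(p,q)$-forms for bidegree reasons, so every $d$-closed form, for instance one with constant coefficients, lies in the kernel.
\end{itemize}
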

\begin{proof}
Let's first show that $ker(\square_{\partial\bar{\partial}})=\lbrace 0 \rbrace. $\\
Show that $ker(\square_{\partial\bar{\partial}})=\lbrace 0 \rbrace$  means that $ker(\square_{\partial\bar{\partial}})= ker(\partial\bar{\partial})\cap ker(\partial\bar{\partial})^\ast=\lbrace 0 \rbrace. $\\
Let $\alpha\in ker(\square_{\partial\bar{\partial}})$, we have $\alpha\in Dom(\partial\bar{\partial})\cap Dom(\partial\bar{\partial})^\ast$ and
\begin{align*}
0=<\alpha, \square_{\partial\bar{\partial}}\alpha> &= <\alpha,(\partial\bar{\partial}(\partial\bar{\partial})^\ast+(\partial\bar{\partial})^\ast\partial\bar{\partial})\alpha>\\
&=<\alpha,\partial\bar{\partial}(\partial\bar{\partial})^\ast\alpha+(\partial\bar{\partial})^\ast\partial\bar{\partial}\alpha>\\
&=<\alpha,\partial\bar{\partial}(\partial\bar{\partial})^\ast\alpha>+<\alpha,(\partial\bar{\partial})^\ast\partial\bar{\partial}\alpha>\\
&=<(\partial\bar{\partial})^\ast\alpha,(\partial\bar{\partial})^\ast\alpha>+<\partial\bar{\partial}\alpha,\partial\bar{\partial}\alpha>\\
&=||(\partial\bar{\partial})^\ast\alpha||^2+||\partial\bar{\partial}\alpha||^2.
\end{align*}
We have $\partial\bar{\partial}\alpha=0$ and $(\partial\bar{\partial})^\ast\alpha=0$ so $\alpha\in ker(\partial\bar{\partial})\cap ker(\partial\bar{\partial})^\ast. $\\
Therefore $ker(\square_{\partial\bar{\partial}})\subset ker(\partial\bar{\partial})\cap ker(\partial\bar{\partial})^\ast. $\\
On the other hand, if $\alpha\in ker(\partial\bar{\partial})\cap ker(\partial\bar{\partial})^\ast$, then $\alpha\in Dom(\square_{\partial\bar{\partial}})$ and $\square_{\partial\bar{\partial}}\alpha=0. $ So $\alpha\in ker(\square_{\partial\bar{\partial}}). $\\
Thus $ker(\square_{\partial\bar{\partial}})\supset ker(\partial\bar{\partial})\cap ker(\partial\bar{\partial})^\ast. $\\
To see that $ker(\partial\bar{\partial})\cap ker(\partial\bar{\partial})^\ast=\lbrace 0 \rbrace,$ we take $\alpha\in  L^2_{p,q}(\Omega)\cap ker(d)\cap ker(\square_{\partial\bar{\partial}})$ so $\alpha$ is $d$-closed and $\alpha\in ker(\partial\bar{\partial})\cap ker(\partial\bar{\partial})^\ast. $\\
 Following the same techniques of Proposition $2$  of  {\color{red}\cite{3}}, there exists $u\in L^2_{p-1, q-1}(\Omega)$ such that  $\alpha=\partial\bar{\partial}u. $\\
 We have:\\
\begin{align*}
0=<(\partial\bar{\partial})^\ast\partial\bar{\partial}u, u>&=<\partial\bar{\partial}u, \partial\bar{\partial}u>\\
&=||\partial\bar{\partial}u||^2.
\end{align*}
Therefore $\alpha=0.$ Thus $ker(\square_{\partial\bar{\partial}})=\lbrace 0\rbrace. $\\
Now let's show that $R(\square_{\partial\bar{\partial}})\cap ker(d)$ is closed.\\
Let $f\in L^2_{p,q}(\Omega)$ such that $df=0. $ By following the same steps as the Proposition $2$ of {\color{red}\cite{3}}, there exists $u\in L^2_{p-1, q-1}(\Omega)$ such that $\partial\bar{\partial}u=f$ and $||u||\leq k||f||. $\\
Therefore $R(\partial\bar{\partial})$ is closed according to Lemma $4.1.1$ of  {\color{red}\cite{1}} and is equal to $ker(\partial\bar{\partial})$ considering the cohomology $\frac{ker(\partial\bar{\partial})}{R(\partial\bar{\partial})}. $\\
It follows from the lemma $4.1.1$ of {\color{red}\cite{1}} that $R(\partial\bar{\partial})^\ast $ is closed and we have the following decomposition:
\begin{align*}
L^2_{p,q}(\Omega)\cap ker(d)&=ker(\partial\bar{\partial})\cap ker(d)\oplus R((\partial\bar{\partial})^\ast)\cap ker(d)\\
&=R(\partial\bar{\partial})\cap ker(d)\oplus R((\partial\bar{\partial})^\ast)\cap ker(d).
\end{align*}
Let $f\in Dom(\partial\bar{\partial})\cap Dom(\partial\bar{\partial})^\ast$, we have $f=f_1\oplus f_2$ with $f_1 \in R(\partial\bar{\partial})$ and $f_2\in R((\partial\bar{\partial})^\ast)$ and $\partial\bar{\partial}f_1=0$ , $(\partial\bar{\partial})^\ast f_2=0. $\\
So $f_1, f_2\in Dom(\partial\bar{\partial})\cap Dom(\partial\bar{\partial})^\ast$ and we have: $\partial\bar{\partial}f=\partial\bar{\partial}f_2$ and also $(\partial\bar{\partial})^\ast f=(\partial\bar{\partial})^\ast f_1. $\\
Since $R(\partial\bar{\partial})$ is closed and $R(\partial\bar{\partial})^\ast$ is closed, we have the following estimates: $$||f_2||^2\leq k_1||\partial\bar{\partial}f_2||^2 $$
$$||f_1||^2\leq k_2||(\partial\bar{\partial})^\ast f_1||^2. $$
We have:
$||f||^2=||f_1||^2+||f_2||^2\leq k_1||\partial\bar{\partial}f_2||^2+ k_2||(\partial\bar{\partial})^\ast f_1||^2. $\\
If we take $k=max(k_1,k_2)$, we have: $$||f||^2\leq k(||\partial\bar{\partial}f||^2+||(\partial\bar{\partial})^\ast f||^2). $$ For each $f\in Dom(\square_{\partial\bar{\partial}})$:
\begin{align*}
||f||^2 &\leq k[ <\partial\bar{\partial}f,\partial\bar{\partial}f>+<(\partial\bar{\partial})^\ast f,(\partial\bar{\partial})^\ast f>]\\
&=k[ <f,(\partial\bar{\partial})^\ast\partial\bar{\partial}f>+< f,\partial\bar{\partial}(\partial\bar{\partial})^\ast f>]\\
&= k[ <f,((\partial\bar{\partial})^\ast\partial\bar{\partial}+\partial\bar{\partial}(\partial\bar{\partial})^\ast) f>]\\
&=k<f, \square_{\partial\bar{\partial}}f>
\end{align*}
Using the Cauchy-Schwartz inequality, we have:
$$||f||^2\leq k ||\square_{\partial\bar{\partial}}f||||f||. $$
Finally, we have the following relation:
\begin{equation}\label{A}
||f||\leq k|| \square_{\partial\bar{\partial}}f||.
\end{equation}
Thus, according to the lemma $4.1.1$ of  {\color{red}\cite{1}}, $R(\square_{\partial\bar{\partial}})\cap ker(d)$ is closed. 
\end{proof}
\begin{remark}\item
\textnormal{From Lemma 2, $ker(\square_{\partial\bar{\partial}})=\lbrace 0 \rbrace \Rightarrow \square_{\partial\bar{\partial}}$ is injective. \\
And according to the Hodge decomposition, $$L^2_{p,q}(\Omega)\cap ker(d)=R(\square_{\partial\bar{\partial}})\cap ker(d)$$ so $\square_{\partial\bar{\partial}}$ is surjective.
Therefore the $\square_{\partial\bar{\partial}}$ is invertible. }
\end{remark}
Now we can prove the main theorem of this section.
\subsection{ Main theorem}
\begin{thm} \label{G}\item
\textnormal{Let $\Omega$ be a strictly pseudoconvex bounded starred domain of $\mathbb{C}^n$, $n\geq 2. $ For $1\leq p\leq n$ et $1\leq q\leq n, $ there is an operator  $$N_{\partial\bar{\partial}}:  L_{p,q}^{2}(\Omega)\cap\ker(d)\longrightarrow L_{p,q}^{2}(\Omega)\cap\ker(d)$$ with the following properties:}
\begin{itemize}\item[(1)]
$R(N_{\partial\bar{\partial}})\subset Dom(\square_{\partial\bar{\partial}}), $
\item[(2)]
$N_{\partial\bar{\partial}}\square_{\partial\bar{\partial}}=\square_{\partial\bar{\partial}}N_{\partial\bar{\partial}}=Id$ on $Dom(\square_{\partial\bar{\partial}})$
\item[(3)]
For all $f\in L^2_{p,q}(\Omega)\cap\ker(d),$ $$f=\partial\bar{\partial}(\partial\bar{\partial})^\ast N_{\partial\bar{\partial}}f\bigoplus (\partial\bar{\partial})^\ast \partial\bar{\partial} N_{\partial\bar{\partial}}f$$
\item[(4)]
$\partial\bar{\partial}N_{p,q}=N_{p+1,q+1}\partial\bar{\partial}$ on $Dom(\partial\bar{\partial})$ with $1\leq p\leq n-1, $ $1\leq q\leq n-1. $
\item[(5)]
$(\partial\bar{\partial})^\ast N_{p,q}=N_{p-1,q-1}(\partial\bar{\partial})^\ast $ on $Dom(\partial\bar{\partial})^\ast $ with $2\leq p\leq n, $ $2\leq q\leq n. $
\item[(6)]
For $f\in L^2_{p,q}(\Omega)\cap\ker(d), $
\textnormal{there is a constant $k\geq0$ such that:}
\end{itemize}
\textnormal{
$$||N_{\partial\bar{\partial}}f||\leq k||f||\mbox  {  and  } ||\partial\bar{\partial}N_{p,q}f||\leq \sqrt{k}||f||\mbox  {  and  }||(\partial\bar{\partial})^\ast N_{p,q}f||\leq \sqrt{k}||f||. $$}
\end{thm}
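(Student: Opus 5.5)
The plan is to follow the proof of Theorem \eqref{P}, but with Lemma \eqref{OS} in place of the hypothesis that $R(\square_{\partial\bar{\partial}})$ is closed. On $L^2_{p,q}(\Omega)\cap\ker(d)$ we now have $\ker(\square_{\partial\bar{\partial}})=\{0\}$, so no quotient and no convention like Remark \eqref{DD} is needed.

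\emph{Construction of $N_{\partial\bar{\partial}}$ and properties (1)--(2).} Lemma \eqref{DS} says $\square_{\partial\bar{\partial}}$ is closed, densely defined and self-adjoint. Lemma \eqref{OS} gives the a priori estimate \eqref{A}, $\|f\|\leq k\|\square_{\partial\bar{\partial}}f\|$ for $f\in Dom(\square_{\partial\bar{\partial}})\cap\ker(d)$, together with $\ker(\square_{\partial\bar{\partial}})=\{0\}$. By Lemma $4.1.1$ of \cite{1}, $R(\square_{\partial\bar{\partial}})\cap\ker(d)$ is closed. The Hodge decomposition of the subsequent Remark then gives $R(\square_{\partial\bar{\partial}})\cap\ker(d)=L^2_{p,q}(\Omega)\cap\ker(d)$. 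Hence $\square_{\partial\bar{\partial}}$ is a bijection from $Dom(\square_{\partial\bar{\partial}})\cap\ker(d)$ onto $L^2_{p,q}(\Omega)\cap\ker(d)$. I define $N_{\partial\bar{\partial}}$ as its inverse. Then (1) holds by construction, and (2) is the statement that $N_{\partial\bar{\partial}}$ is a two-sided inverse.

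\emph{Properties (3)--(5).} For (3), write $f=\square_{\partial\bar{\partial}}N_{\partial\bar{\partial}}f$ and expand $\square_{\partial\bar{\partial}}$. The two summands are orthogonal because $\langle \partial\bar{\partial}a,(\partial\bar{\partial})^\ast b\rangle=\langle a,((\partial\bar{\partial})^\ast)^2 b\rangle=0$, using $(\partial\bar{\partial})^2=0$.

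For (4), take $f\in Dom(\partial\bar{\partial})\cap\ker(d)$ and apply $\partial\bar{\partial}$ to (3). The term $\partial\bar{\partial}\partial\bar{\partial}(\cdots)$ vanishes, which leaves
$$\partial\bar{\partial}f=\partial\bar{\partial}(\partial\bar{\partial})^\ast\partial\bar{\partial}N_{p,q}f=\square_{\partial\bar{\partial}}\,\partial\bar{\partial}N_{p,q}f,$$
since $(\partial\bar{\partial})^\ast\partial\bar{\partial}\,\partial\bar{\partial}N f=0$. Applying $N_{p+1,q+1}$ then gives (4). For this I need $\partial\bar{\partial}N_{p,q}f\in Dom(\square_{\partial\bar{\partial}})\cap\ker(d)$. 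It is $d$-closed because $\partial\bar{\partial}=-d\partial$ on forms, so $\partial\bar{\partial}g$ is always $d$-exact.

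Property (5) is the symmetric computation with $(\partial\bar{\partial})^\ast$, using $((\partial\bar{\partial})^\ast)^2=0$.

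\emph{Estimates (6).} Put $N f$ into \eqref{A} to get $\|Nf\|\leq k\|\square_{\partial\bar{\partial}}Nf\|=k\|f\|$. Next,
$$\|\partial\bar{\partial}Nf\|^2+\|(\partial\bar{\partial})^\ast Nf\|^2=\langle \square_{\partial\bar{\partial}}Nf,Nf\rangle=\langle f,Nf\rangle\leq k\|f\|^2$$
by Cauchy--Schwarz. Taking square roots gives the other two bounds.

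\emph{Main obstacle.} The delicate point is that $\ker(d)$ is preserved where it is needed. In (5), and in (3) for the $(\partial\bar{\partial})^\ast$ term, I must know that $(\partial\bar{\partial})^\ast N f$ and $(\partial\bar{\partial})^\ast\partial\bar{\partial}Nf$ remain $d$-closed, so that $N_{p-1,q-1}$ is defined on them; this is not automatic. My first approach is to interpret everything inside the closed subspace $L^2\cap\ker(d)$: compose with the orthogonal projection onto it, and use the decomposition $R(\partial\bar{\partial})\cap\ker(d)\oplus R((\partial\bar{\partial})^\ast)\cap\ker(d)$ from the proof of Lemma \eqref{OS} to check that the projection commutes with the operators involved. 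If that fails, I would restrict (5) to those $f$ for which $(\partial\bar{\partial})^\ast f$ is $d$-closed.
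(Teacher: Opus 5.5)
Your argument for (1)--(4) and (6) is the paper's own: invert $\square_{\partial\bar{\partial}}$ on $L^2_{p,q}(\Omega)\cap\ker(d)$ using \eqref{A} and $\ker(\square_{\partial\bar{\partial}})=\{0\}$, expand $f=\square_{\partial\bar{\partial}}N_{\partial\bar{\partial}}f$, apply $\partial\bar{\partial}$, and finish with Cauchy--Schwarz. One observation clarifies the rest. A $d$-closed form $h$ has $\partial h=\bar{\partial}h=0$ by bidegree, hence $\partial\bar{\partial}h=0$. So $\square_{\partial\bar{\partial}}h=\partial\bar{\partial}(\partial\bar{\partial})^\ast h$ is again $d$-closed, which your construction uses without saying so. Since $N_{\partial\bar{\partial}}f\in\ker(d)$, the second summand in (3) and both sides of (4) are identically $0$. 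Thus (5) is the only substantive commutation property. There your proposal has a genuine gap: you flag it correctly, but neither of your fixes works.

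For $d$-closed $h\in L^2_{r,s}(\Omega)$ and $g\in Dom(\partial\bar{\partial})^\ast\subset L^2_{r+1,s+1}(\Omega)$,
\[
\langle(\partial\bar{\partial})^\ast g,h\rangle=\langle g,\partial\bar{\partial}h\rangle=0 .
\]
Hence $R((\partial\bar{\partial})^\ast)\perp\ker(d)$, so a form $(\partial\bar{\partial})^\ast g$ is $d$-closed only if it is $0$. This defeats both fixes.
\emph{Projection.} The projection $P$ onto $\ker(d)$ satisfies $P(\partial\bar{\partial})^\ast=0$, so it cannot commute with $(\partial\bar{\partial})^\ast$. The summand $R((\partial\bar{\partial})^\ast)\cap\ker(d)$ you wanted from the proof of the kernel lemma is $\{0\}$. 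A projected (5) is false or empty. Projecting the right side gives $0$, yet $(\partial\bar{\partial})^\ast N_{p,q}f\neq0$ for $f\neq0$, since otherwise $f=\partial\bar{\partial}(\partial\bar{\partial})^\ast N_{p,q}f=0$. Projecting both sides gives $0=0$.
\emph{Restriction.} If $(\partial\bar{\partial})^\ast f$ is $d$-closed, it is $0$. Together with $\partial\bar{\partial}f=0$ this gives $f\in\ker(\square_{\partial\bar{\partial}})\cap\ker(d)=\{0\}$, so (5) would hold only for $f=0$.
\emph{The paper.} Its proof has the same hole. It applies $N_{p-1,q-1}\square_{\partial\bar{\partial}}=I$ to $(\partial\bar{\partial})^\ast N_{p,q}f\in\ker(d)^\perp$, which is outside the domain of the $N_{p-1,q-1}$ of Theorem~\ref{G}.
\emph{What is needed.} Statement (5) only makes sense if $N_{p-1,q-1}$ inverts $\square_{\partial\bar{\partial}}$ on a space containing $R((\partial\bar{\partial})^\ast)$, for example on $\ker(\square_{\partial\bar{\partial}})^\perp$ as in Theorem~\ref{P}. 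With that operator your symmetric computation does go through. Put $u=(\partial\bar{\partial})^\ast N_{p,q}f$; then $\partial\bar{\partial}u=f$ and $(\partial\bar{\partial})^\ast u=0$. So $u\in Dom(\square_{\partial\bar{\partial}})\cap\ker(\square_{\partial\bar{\partial}})^\perp$ and $\square_{\partial\bar{\partial}}u=(\partial\bar{\partial})^\ast f$.
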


\begin{proof}
We have:
$$\square_{\partial\bar{\partial}}:Dom(\square_{\partial\bar{\partial}})\subset L^2_{p,q}(\Omega)\cap ker(d)\longrightarrow L^2_{p,q}(\Omega)\cap ker(d). $$
By definition, the operator $N_{ \partial\bar{\partial}}$ is the inverse of the operator $\square_{\partial\bar{\partial}}$ with $$N_{\partial\bar{\partial}}: L^2_{p,q}(\Omega)\cap ker(d)\longrightarrow Dom(\square_{\partial\bar{\partial}})\subset L^2_{p,q}(\Omega)\cap ker(d). $$
Consequently, the properties $(1)$ and $(2)$  are verified i.e $R(N_{\partial\bar{\partial}})\subset Dom(\square_{\partial\bar{\partial}})$ and $N_{\partial\bar{\partial}}\square_{\partial\bar{\partial}}=\square_{\partial\bar{\partial}}N_{\partial\bar{\partial}}=Id$ on $Dom(\square_{\partial\bar{\partial}}). $\\
Let's show the property $(3)$ i.e for all $f\in L^2_{p,q}(\Omega)\cap\ker(d),$ $$f=\partial\bar{\partial}(\partial\bar{\partial})^\ast N_{\partial\bar{\partial}}f\bigoplus (\partial\bar{\partial})^\ast \partial\bar{\partial} N_{\partial\bar{\partial}}f. $$
Since $L^2_{p,q}(\Omega)\cap ker(d)$ is a Hilbert space and $ker(\square_{\partial\bar{\partial}})=\lbrace 0\rbrace $, the Hodge decomposition gives us the following relation:
\begin{align*}
L^2_{p,q}(\Omega)\cap ker(d)&=R(\square_{\partial\bar{\partial}})\cap ker(d)\\
&=(\partial\bar{\partial}(\partial\bar{\partial})^\ast(Dom(\square_{\partial\bar{\partial}}))\oplus (\partial\bar{\partial})^\ast\partial\bar{\partial}(Dom(\square_{\partial\bar{\partial}})))\cap ker(d). 
\end{align*}
So $\forall\;\;f\in L^2_{p,q}(\Omega)\cap ker(d)$, $f$ is written as:
\begin{equation}\label{Z}
f=\partial\bar{\partial}(\partial\bar{\partial})^\ast N_{p,q}f\oplus (\partial\bar{\partial})^\ast\partial\bar{\partial} N_{p,q}f.
\end{equation}
Let's show the property $(4)$ i.e $\partial\bar{\partial}N_{p,q}=N_{p+1,q+1}\partial\bar{\partial}$ on $Dom(\partial\bar{\partial})$ with $1\leq p\leq n-1, $ $1\leq q\leq n-1. $\\
Let $f\in Dom(\partial\bar{\partial}). $ \\
By computing  $\partial\bar{\partial}$ in the relation \eqref{Z}, we have:\\
$$\partial\bar{\partial}f=\partial\bar{\partial}\partial\bar{\partial}(\partial\bar{\partial})^\ast N_{p,q}f\oplus \partial\bar{\partial}(\partial\bar{\partial})^\ast\partial\bar{\partial} N_{p,q}f. $$
Or $$\partial\bar{\partial}\partial\bar{\partial}(\partial\bar{\partial})^\ast N_{p,q}f=0. $$
Donc:
$$\partial\bar{\partial}f=\partial\bar{\partial}(\partial\bar{\partial})^\ast\partial\bar{\partial} N_{p,q}f. $$
Thus, 
\begin{align*}
N_{p+1, q+1}\partial\bar{\partial}f&=N_{p+1, q+1}\partial\bar{\partial}(\partial\bar{\partial})^\ast\partial\bar{\partial} N_{p,q}f\\
&=N_{p+1, q+1}[\partial\bar{\partial}(\partial\bar{\partial})^\ast+(\partial\bar{\partial})^\ast\partial\bar{\partial}]\partial\bar{\partial} N_{p,q}f
\end{align*}
Using the property $(2)$, $$N_{p+1, q+1}[\partial\bar{\partial}(\partial\bar{\partial})^\ast+(\partial\bar{\partial})^\ast\partial\bar{\partial}]=I. $$
Therefore, $$N_{p+1, q+1}\partial\bar{\partial}f=\partial\bar{\partial} N_{p,q}f$$ $\forall\;\;f\in Dom(\partial\bar{\partial}). $\\
This proves the property $(4)$ i.e $$N_{p+1, q+1}\partial\bar{\partial}=\partial\bar{\partial} N_{p,q}. $$
Let's show the property $(5)$ i.e $(\partial\bar{\partial})^\ast N_{p,q}=N_{p-1,q-1}(\partial\bar{\partial})^\ast $ on $Dom(\partial\bar{\partial})^\ast $ with $2\leq p\leq n, $ $2\leq q\leq n. $\\
Let $f\in Dom(\partial\bar{\partial})^\ast. $ \\
By computing $(\partial\bar{\partial})^\ast$ in the relation \eqref{Z}, we have:\\
$$(\partial\bar{\partial})^\ast f=(\partial\bar{\partial})^\ast\partial\bar{\partial}(\partial\bar{\partial})^\ast N_{p,q}f\oplus (\partial\bar{\partial})^\ast(\partial\bar{\partial})^\ast\partial\bar{\partial}  N_{p,q}f. $$
Or $$(\partial\bar{\partial})^\ast(\partial\bar{\partial})^\ast\partial\bar{\partial} N_{p,q}f=0. $$
So:
$$(\partial\bar{\partial})^\ast f=(\partial\bar{\partial})^\ast(\partial\bar{\partial})(\partial\bar{\partial})^\ast N_{p,q}f. $$
Ainsi, 
\begin{align*}
N_{p-1, q-1}(\partial\bar{\partial})^\ast f&=N_{p-1, q-1}(\partial\bar{\partial})^\ast\partial\bar{\partial}(\partial\bar{\partial})^\ast N_{p,q}f\\
&=N_{p-1, q-1}[\partial\bar{\partial}(\partial\bar{\partial})^\ast+(\partial\bar{\partial})^\ast\partial\bar{\partial}](\partial\bar{\partial} )^\ast N_{p,q}f
\end{align*}
Using the property $(2)$, $$N_{p-1, q-1}[\partial\bar{\partial}(\partial\bar{\partial})^\ast+(\partial\bar{\partial})^\ast\partial\bar{\partial}]=I $$
Therefore, $$N_{p-1, q-1}(\partial\bar{\partial})^\ast f=(\partial\bar{\partial})^\ast N_{p,q}f$$ $\forall\;\;f\in Dom(\partial\bar{\partial}). $\\
This proves the property $(5)$ i.e $$N_{p+1, q+1}(\partial\bar{\partial})^\ast=(\partial\bar{\partial} )^\ast N_{p,q}. $$
Finally, let's show the property $(6)$ i.e $$||N_{\partial\bar{\partial}}f||\leq k||f||\mbox  {  and  } ||\partial\bar{\partial}N_{p,q}f||\leq \sqrt{k}||f||\mbox  {  and  }||(\partial\bar{\partial})^\ast N_{p,q}f||\leq \sqrt{k}||f||. $$
According to the property $(1)$ i.e $R(N_{\partial\bar{\partial}})\subset Dom(\square_{\partial\bar{\partial}}). $\\
Since $f\in Dom(\square_{\partial\bar{\partial}})$, we have:
\begin{align*}
||N_{p,q}f||&\leq k||\square_{\partial\bar{\partial}}N_{p,q}f||\\
&\leq k||f||.
\end{align*}
Let's computing $||\partial\bar{\partial}N_{p,q}f||^2+||(\partial\bar{\partial})^\ast N_{p,q}f||^2. $\\
\begin{align*}
||\partial\bar{\partial}N_{p,q}f||^2+||(\partial\bar{\partial})^\ast N_{p,q}f||^2&=<\partial\bar{\partial}N_{p,q}f,\partial\bar{\partial}N_{p,q}f>+<(\partial\bar{\partial})^\ast N_{p,q},(\partial\bar{\partial})^\ast N_{p,q}f>\\
&=<N_{p,q}f, (\partial\bar{\partial})^\ast\partial\bar{\partial}N_{p,q}f>+<N_{p,q}f,\partial\bar{\partial}(\partial\bar{\partial})^\ast N_{p,q}f>\\
&=<N_{p,q}f,(\partial\bar{\partial})^\ast\partial\bar{\partial}N_{p,q}f+\partial\bar{\partial}(\partial\bar{\partial})^\ast N_{p,q}f>\\
&=<N_{p,q}f,((\partial\bar{\partial})^\ast\partial\bar{\partial}+\partial\bar{\partial}(\partial\bar{\partial})^\ast) N_{p,q}f>\\
&=<N_{p,q}f,\square_{\partial\bar{\partial}} N_{p,q}f>\\
&=<N_{p,q}f,f>.
\end{align*}
	So $$||\partial\bar{\partial}N_{p,q}f||^2+||(\partial\bar{\partial})^\ast N_{p,q}f||^2=<N_{p,q}f,f>. $$
	Using the Cauchy-schawtz inequality, we have $$||\partial\bar{\partial}N_{p,q}f||^2+||(\partial\bar{\partial})^\ast N_{p,q}f||^2\leq ||N_{p,q}f||||f||. $$
	Since $$||N_{p,q}f||\leq k||f||. $$
	So $$||\partial\bar{\partial}N_{p,q}f||^2+||(\partial\bar{\partial})^\ast N_{p,q}f||^2\leq k||f||^2. $$
	Therefore, $$||\partial\bar{\partial}N_{p,q}f||^2\leq k||f||^2 \mbox  {  and } |(\partial\bar{\partial})^\ast N_{p,q}f||^2\leq k||f||^2. $$
	Thus, $$||\partial\bar{\partial}N_{p,q}f||\leq \sqrt{k}||f|| \mbox {  and  } |(\partial\bar{\partial})^\ast N_{p,q}f||\leq \sqrt{k}||f||. $$
\end{proof}
\begin{remark}\item
\textnormal{With the same assumptions of Thorem \eqref{G}, we find the canonical solution of $\partial\bar{\partial}. $\\ This is because according to the property $(3)$ of the Theorem \eqref{G}, $$f=\partial\bar{\partial}(\partial\bar{\partial})^\ast N_{p,q}f+(\partial\bar{\partial})^\ast\partial\bar{\partial}N_{p,q}f. $$
Using the property $(4)$ of the theorem \eqref{G}, $$N_{p+1,q+1}\partial\bar{\partial}f=\partial\bar{\partial}N_{p,q}f. $$
Since $f\in L^2_{p,q}(\Omega)\cap ker(d)$, then $f$ is $\partial\bar{\partial}$-closed. So $$(\partial\bar{\partial})^\ast\partial\bar{\partial}N_{p,q}f=0. $$
Thus, $$f=\partial\bar{\partial}(\partial\bar{\partial})^\ast N_{p,q}f. $$
So the canonical solution  is given by $u=(\partial\bar{\partial})^\ast N_{p,q}f. $}

\end{remark}
\begin{remark}\item
\textnormal{Remember that $$ \square_{\partial\bar{\partial}}f=\partial\bar{\partial}(\partial\bar{\partial})^\ast f+(\partial\bar{\partial})^\ast\partial\bar{\partial}f. $$
For $p=q=0, $ $(\partial\bar{\partial})^\ast f=0$ for bidegree reasons. In addition  $\partial\bar{\partial}f=0$ because $f\in L^2_{p,q}\cap ker(d). $ \\ So the Laplacian is vanishing.}
\end{remark}
\section{The inverse of  $\tilde{\Delta}_{BC}=(\partial\bar{\partial})(\partial\bar{\partial})^\ast+\partial^\ast\partial+\bar{\partial}^\ast\bar{\partial}$ on $L^2_{p,q}(\Omega). $}
In this section, we focus on the non-elliptic Bott Chern Laplacian (Cf {\color{red}\cite{4}}) denoted $\tilde{\Delta}_{BC}$ and given as $$\tilde{\Delta}_{BC}=(\partial\bar{\partial})(\partial\bar{\partial})^\ast+\partial^\ast\partial+\bar{\partial}^\ast\bar{\partial}. $$
\subsection{Some Lemmas}
Before proving the main Theorem  of this section (Theorem \eqref{E}), we prove some lemmas.
\begin{lemma}\item
	\textnormal{The Laplacian $\tilde{\Delta}_{BC}=(\partial\bar{\partial})(\partial\bar{\partial})^\ast+\partial^\ast\partial+\bar{\partial}^\ast\bar{\partial}$ is closed, densely-defined and self-adjoint.}
\end{lemma}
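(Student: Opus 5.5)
The plan is to treat $\tilde{\Delta}_{BC}$ as the operator associated with a closed, densely defined, nonnegative quadratic form, rather than checking closedness by hand. Put $T_1=(\partial\bar{\partial})^\ast$, $T_2=\partial$, $T_3=\bar{\partial}$, each viewed as a closed, densely defined operator between the relevant $L^2$ form spaces ($\partial$ and $\bar{\partial}$ in the weak/distributional sense, and $(\partial\bar{\partial})^\ast$ as the Hilbert space adjoint of the closed operator $\partial\bar{\partial}$, exactly as in Lemma \ref{DS}). The domain of $\tilde{\Delta}_{BC}$ is defined in the same way as $Dom(\square_{\partial\bar{\partial}})$:
$$Dom(\tilde{\Delta}_{BC})=\lbrace f\in Dom(T_1)\cap Dom(\partial)\cap Dom(\bar{\partial})\mid T_1f\in Dom(\partial\bar{\partial}),\ \partial f\in Dom(\partial^\ast),\ \bar{\partial}f\in Dom(\bar{\partial}^\ast)\rbrace .$$
On $Dom(Q)=Dom(T_1)\cap Dom(\partial)\cap Dom(\bar{\partial})$ define the form
$$Q(f,g)=<T_1f,T_1g>+<\partial f,\partial g>+<\bar{\partial}f,\bar{\partial}g>.$$

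First I would show that $Q$ is a closed form. Take a sequence $f_n$ that is Cauchy in the graph norm $\|f\|^2+Q(f,f)$. Then $f_n$, $T_1f_n$, $\partial f_n$ and $\bar{\partial}f_n$ are each Cauchy, hence convergent in $L^2$. Because each $T_i$ is a closed operator, the limit $f$ lies in every domain and $T_if_n\to T_if$. This is the same mechanism used for Lemma \ref{DS}.

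Density of $Dom(Q)$ follows as in Lemma \ref{DS}: $D^{p,q}(\Omega)$ is contained in all three domains (a compactly supported smooth form is in the domain of every adjoint), and $D^{p,q}(\Omega)$ is dense in $L^2_{p,q}(\Omega)$.

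Next I invoke the representation theorem for closed, densely defined, nonnegative forms (Kato, or equivalently Friedrichs). It gives a unique self-adjoint operator $A\geq 0$ such that $<Af,g>=Q(f,g)$. I then identify $A$ with $\tilde{\Delta}_{BC}$: $f\in Dom(A)$ exactly when $g\mapsto Q(f,g)$ is $L^2$-bounded. By the definition of the adjoints $(\partial\bar{\partial})^{\ast\ast}=\partial\bar{\partial}$, $\partial^\ast$ and $\bar{\partial}^\ast$, this is equivalent to the three membership conditions above, and in that case $Af=\partial\bar{\partial}T_1f+\partial^\ast\partial f+\bar{\partial}^\ast\bar{\partial}f$.

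An alternative route is the von Neumann trick. Write $\tilde{\Delta}_{BC}=S^\ast S$ with
$$S=(T_1,\partial,\bar{\partial}):L^2_{p,q}\to L^2_{p-1,q-1}\oplus L^2_{p+1,q}\oplus L^2_{p,q+1}.$$
The operator $S$ is closed and densely defined, so $S^\ast S$ is self-adjoint by von Neumann's theorem, and in particular closed.

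The main obstacle is this identification of $Dom(A)$, i.e. the second-order domain. For a single summand the equivalence is formal, but for a sum one must check that boundedness of the sum $g\mapsto Q(f,g)$ forces each piece into the appropriate adjoint domain separately. I would handle this by using $S^\ast S$ directly: $S^\ast(u_1,u_2,u_3)=\partial\bar{\partial}u_1+\partial^\ast u_2+\bar{\partial}^\ast u_3$ on the \emph{joint} domain of $S^\ast$. I would then adopt $Dom(S^\ast S)$ as the definition of $Dom(\tilde{\Delta}_{BC})$, which is consistent with the spirit of the paper's definition of $Dom(\square_{\partial\bar{\partial}})$. Finally, symmetry also follows directly: for $u,v\in Dom(\tilde{\Delta}_{BC})$ the integration-by-parts computation of Lemma \ref{DS} gives $<\tilde{\Delta}_{BC}u,v>=Q(u,v)=<u,\tilde{\Delta}_{BC}v>$, which is the identity the paper would display.
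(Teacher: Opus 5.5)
Your route is correct and differs from the paper's, with one correction. Commit from the outset to $Dom(\tilde{\Delta}_{BC}):=Dom(S^\ast S)=\{f\in Dom(S)\mid Sf\in Dom(S^\ast)\}$. Also delete the sentence in the Kato step asserting that $Dom(A)$ equals the termwise domain you displayed first.

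That identification is not merely unproved; it is false in general, so this choice of domain is forced. The paper never specifies $Dom(\tilde{\Delta}_{BC})$, so you are free to make it. A counterexample in bidegree $(0,0)$ on the unit ball, where the $\partial\bar{\partial}(\partial\bar{\partial})^\ast$ term is absent:
\begin{itemize}
\item Here $S^\ast S$ is a multiple of the Neumann Laplacian. A function $f$ smooth up to the boundary lies in $Dom(S^\ast S)$ iff $\sum_j(z_jf_{z_j}+\bar z_jf_{\bar z_j})=0$ on $|z|=1$.
\item The termwise domain instead demands $\sum_j z_jf_{z_j}=0$ and $\sum_j\bar z_jf_{\bar z_j}=0$ separately.
\item Take $f=z_1\psi(|z|^2)$ with $\psi(1)=1$, $\psi'(1)=-\tfrac12$. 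The sum vanishes on the sphere, but $\sum_j z_jf_{z_j}=z_1/2$ there.
\end{itemize}
So the termwise operator is a proper restriction of the self-adjoint $S^\ast S$, and hence is not self-adjoint.

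The mechanism is that in $Dom(S^\ast)$ the boundary conditions of $Dom(\partial^\ast)$ and $Dom(\bar{\partial}^\ast)$ are imposed only as a sum. On $Dom(S^\ast S)$, the expression $\partial^\ast\partial f+\bar{\partial}^\ast\bar{\partial}f$ must therefore be read as the joint adjoint of $(\partial,\bar{\partial})$ applied to $(\partial f,\bar{\partial}f)$; the two terms need not exist separately. The first component does split: if $(u_1,u_2,u_3)\in Dom(S^\ast)$ then $u_1\in Dom(\partial\bar{\partial})$, because $\overline{R(\partial\bar{\partial})}\subset\ker\partial\cap\ker\bar{\partial}$. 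For $\square_{\partial\bar{\partial}}$ the termwise domain is harmless, because $(\partial\bar{\partial})^2=0$ lets both components split.

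As for the comparison, the paper verifies the three properties by hand:
\begin{itemize}
\item For closedness it passes to the limit termwise in $\tilde{\Delta}_{BC}f_n$ from $f_n\to f$ alone. It argues as if the operators were continuous and never uses convergence of $\tilde{\Delta}_{BC}f_n$.
\item For self-adjointness it only checks the symmetry identity $\langle\tilde{\Delta}_{BC}u,v\rangle=\langle u,\tilde{\Delta}_{BC}v\rangle$ for $u,v$ in the domain.
\item Density comes from $D^{p,q}(\Omega)\subset Dom(\tilde{\Delta}_{BC})$, as in your argument.
\end{itemize}
Your factorization $\tilde{\Delta}_{BC}=S^\ast S$, with $S$ closed and densely defined, gives closedness, genuine self-adjointness and non-negativity at once from von Neumann's theorem. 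Self-adjointness here means $Dom(A^\ast)=Dom(A)$, which symmetry alone does not give. The price is exactly the domain issue above, which the paper's elementary computation never confronts.
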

\begin{proof}
Let's show that $\tilde{\Delta}_{BC}$ is closed:\\
Let $f_n\in Dom(\tilde{\Delta}_{BC})$, 
\begin{align*}
<\Delta_{BC}f_n, f_n>&=<((\partial\bar{\partial})(\partial\bar{\partial})^\ast+\partial^\ast\partial+\bar{\partial}^\ast\bar{\partial})f_n, f_n>\\
&=<(\partial\bar{\partial})(\partial\bar{\partial})^\ast f_n+\partial^\ast\partial f_n+\bar{\partial}^\ast\bar{\partial}f_n, f_n>\\
&=<(\partial\bar{\partial})(\partial\bar{\partial})^\ast f_n, f_n>+<\partial^\ast\partial f_n, f_n>+<\bar{\partial}^\ast\bar{\partial}f_n, f_n>\\
&=<(\partial\bar{\partial})^\ast f_n, (\partial\bar{\partial})^\ast f_n>+<\partial f_n, \partial f_n>+<\bar{\partial}f_n,\bar{\partial} f_n>\\
&=||(\partial\bar{\partial})^\ast f_n||^2+||\partial f_n||^2+||\bar{\partial}f_n||^2
\end{align*}
$\partial$ and $\bar{\partial}$ are the closing operators, let's show that $(\partial\bar{\partial})^\ast$ is a closed operator. \\
indeed we have:\\ $<(\partial\bar{\partial})^\ast f_n, g>=<f_n, \partial\bar{\partial} g>\longrightarrow <f, \partial\bar{\partial} g>=<(\partial\bar{\partial})^\ast f, g>. $\\
Thus, $$<(\partial\bar{\partial})^\ast f_n, g>\longrightarrow <(\partial\bar{\partial})^\ast f, g> \forall\;\;g\in Dom(\partial\bar{\partial})^\ast\cap Dom(\partial\bar{\partial}). $$ 
So $$(\partial\bar{\partial})^\ast f_n\longrightarrow (\partial\bar{\partial})^\ast f. $$
Since $(\partial\bar{\partial})^\ast$ is a closed operator, $$(\partial\bar{\partial})^\ast f_n\longrightarrow(\partial\bar{\partial})^\ast f. $$
Therefore $$\partial\bar{\partial}(\partial\bar{\partial})^\ast f_n\longrightarrow \partial\bar{\partial}(\partial\bar{\partial})^\ast f. $$
Since $\partial$ is a closed operator, 
$$\partial f_n\longrightarrow \partial f. $$
And $\partial^\ast$ is a closed operator, we have:
$$\partial^\ast\partial f_n\longrightarrow \partial^\ast\partial f. $$
Since $\bar{\partial}$ is the closed operator, we have $$\bar{\partial}f_n\longrightarrow \bar{\partial}f. $$
And $\bar{\partial}^\ast$ is also the closed operator, we have:
$$\bar{\partial}^\ast\bar{\partial}f_n\longrightarrow \bar{\partial}^\ast\bar{\partial}f. $$

Than\\ $\partial\bar{\partial}(\partial\bar{\partial})^\ast f_n+\partial^\ast\partial f_n+\bar{\partial}^\ast\bar{\partial}f_n\longrightarrow   \partial\bar{\partial}(\partial\bar{\partial})^\ast f+\partial^\ast\partial f+\bar{\partial}^\ast\bar{\partial}f. $\\ Therefore $$\tilde{\Delta}_{BC}f_n\longrightarrow \tilde{\Delta}_{BC}f. $$
So $\tilde{\Delta}_{BC}$ is a closed operator.\\
\\
Let's now show that $\tilde{\Delta}_{BC}$ is self-adjoint.\\
Let $u,\;\;v\in Dom(\tilde{\Delta}_{BC})$, do we have $<\tilde{\Delta}_{BC}u, v>=<u,\tilde{\Delta}_{BC}v>?$
\begin{align*}
<\tilde{\Delta}_{BC}u, v>&=<((\partial\bar{\partial})(\partial\bar{\partial})^\ast+\partial^\ast\partial+\bar{\partial}^\ast\bar{\partial})u, v>\\
&=<(\partial\bar{\partial})(\partial\bar{\partial})^\ast u+\partial^\ast\partial u+\bar{\partial}^\ast\bar{\partial}u, v>\\
&=<(\partial\bar{\partial})(\partial\bar{\partial})^\ast u, v>+<\partial^\ast\partial u, v>+<\bar{\partial}^\ast\bar{\partial}u, v>\\
&=<(\partial\bar{\partial})^\ast u, (\partial\bar{\partial})^\ast v>+<\partial u, \partial v>+<\bar{\partial}u,\bar{\partial} v>\\
&=< u,\partial\bar{\partial} (\partial\bar{\partial})^\ast v>+< u, \partial^\ast\partial v>+<u,\bar{\partial}^\ast\bar{\partial} v>\\
&=<u, (\partial\bar{\partial} (\partial\bar{\partial})^\ast +\partial^\ast\partial+\bar{\partial}^\ast\bar{\partial})v>\\
&=<u,\tilde{\Delta}_{BC}v>.
\end{align*}
So $\tilde{\Delta}_{BC}$ is self-adjoint.\\
Let's show that  $\tilde{\Delta}_{BC}$ is densely-defined.\\
Let $D^{p,q}(\Omega)$ the space of $(p,q)$ forms with compact support.\\
We have the following inclusions:
$$D^{p,q}(\Omega)\subset Dom(\tilde{\Delta}_{BC})\subset L^2_{p,q}(\Omega). $$
So we have $$\overline{D^{p,q}(\Omega)}\subset\overline{Dom(\tilde{\Delta}_{BC})}\subset\overline{L^2_{p,q}(\Omega)}. $$
Since $L^2_{p,q}(\Omega)$ is a closed, so $L^2_{p,q}(\Omega)=\overline{L^2_{p,q}(\Omega)}$ and $D^{p,q}(\Omega)$ is dense in $L^2_{p,q}(\Omega)$ so $\overline{D^{p,q}(\Omega)}=L^2_{p,q}(\Omega). $
Thus we have: $$L^2_{p,q}(\Omega)\subset\overline{Dom(\tilde{\Delta}_{BC}})\subset L^2_{p,q}(\Omega). $$
Hence $$\overline{Dom(\tilde{\Delta}_{BC}})=L^2_{p,q}(\Omega). $$
So $\tilde{\Delta}_{BC}$ is densely-defined.
\end{proof}
\begin{remark}\item
\textnormal{According to the Hodge decomposition in Hilbert spaces, we have: $$L^2_{p,q}(\Omega)=\overline{R(\tilde{\Delta}_{BC})}\oplus ker(\tilde{\Delta}_{BC}). $$}
\end{remark}

\begin{lemme}\item\label{OL}
\textnormal{Let $\Omega$ be a strictly pseudoconvex bounded starred domain of $\mathbb{C}^n. $ $R(\tilde{\Delta}_{BC})$ is closed and $ker(\tilde{\Delta}_{BC})=\lbrace 0 \rbrace. $}
\end{lemme}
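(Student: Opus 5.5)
We will follow the same pattern as the proof of Lemma \ref{OS}: first identify $ker(\tilde{\Delta}_{BC})$ through the quadratic form, then prove an a priori estimate $||f||\leq k||\tilde{\Delta}_{BC}f||$ on $Dom(\tilde{\Delta}_{BC})$, and finally deduce closedness of the range from Lemma $4.1.1$ of {\color{red}\cite{1}}. Throughout we take $1\leq p,q$, as in the previous sections, so that Proposition $2$ of {\color{red}\cite{3}} is available.

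\emph{Step 1 (kernel).} For $\alpha\in Dom(\tilde{\Delta}_{BC})$ the computation in the proof of the preceding lemma gives
$$<\tilde{\Delta}_{BC}\alpha,\alpha>=||(\partial\bar{\partial})^\ast\alpha||^2+||\partial\alpha||^2+||\bar{\partial}\alpha||^2 .$$
Hence $ker(\tilde{\Delta}_{BC})=ker(\partial\bar{\partial})^\ast\cap ker(\partial)\cap ker(\bar{\partial})$. Take $\alpha$ in this intersection. Then $\partial\alpha=\bar{\partial}\alpha=0$, so $d\alpha=0$. By the technique of Proposition $2$ of {\color{red}\cite{3}} (the starred hypothesis gives a $d$-Poincaré lemma in $L^2$), there is $u\in L^2_{p-1,q-1}(\Omega)$ with $\partial\bar{\partial}u=\alpha$ and $||u||\leq k||\alpha||$. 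Therefore
$$||\alpha||^2=<\partial\bar{\partial}u,\alpha>=<u,(\partial\bar{\partial})^\ast\alpha>=0,$$
so $ker(\tilde{\Delta}_{BC})=\lbrace 0\rbrace$.

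\emph{Step 2 (splitting).} As in Lemma \ref{OS}, the estimate $||u||\leq k||f||$ for $d$-closed $f$ shows that $R(\partial\bar{\partial})=L^2_{p,q}(\Omega)\cap ker(d)$ is closed. This gives the orthogonal decomposition
$$L^2_{p,q}(\Omega)=\big(ker(\partial)\cap ker(\bar{\partial})\big)\oplus\big(ker(\partial)\cap ker(\bar{\partial})\big)^\perp,$$
in which the first summand equals $R(\partial\bar{\partial})$. Write $f=f_1\oplus f_2$ for $f\in Dom(\tilde{\Delta}_{BC})$. Since $f_2\perp R(\partial\bar{\partial})$, we have $(\partial\bar{\partial})^\ast f_2=0$. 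Since $f_1\in ker(\partial)\cap ker(\bar{\partial})$, we have $\partial f=\partial f_2$ and $\bar{\partial}f=\bar{\partial}f_2$. Closedness of $R(\partial\bar{\partial})$ implies closedness of $R((\partial\bar{\partial})^\ast)$, which yields $||f_1||\leq k_1||(\partial\bar{\partial})^\ast f_1||=k_1||(\partial\bar{\partial})^\ast f||$.

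\emph{Step 3 (the $f_2$ estimate; main obstacle).} It remains to prove $||f_2||\leq k_2(||\partial f_2||+||\bar{\partial}f_2||)$ for $f_2\perp ker(\partial)\cap ker(\bar{\partial})$. This is exactly closedness of the range of the operator
$$T=(\partial,\bar{\partial}):L^2_{p,q}(\Omega)\longrightarrow L^2_{p+1,q}(\Omega)\oplus L^2_{p,q+1}(\Omega),$$
whose kernel is $L^2_{p,q}(\Omega)\cap ker(d)$. I expect this to be the hardest point, because neither Hörmander's estimate for $\bar{\partial}$ nor the one for $\partial$ alone suffices: $f_2$ is orthogonal only to the intersection of the two kernels.

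My first attempt will use the starred structure directly. Given $v$, the $(p+q+1)$-form $dv=\partial v+\bar{\partial}v$ is $d$-closed. The radial homotopy operator $H$ of the starred domain satisfies $dH+Hd=Id$ on forms of positive degree and is bounded in $L^2$. Take $w$ to be the $(p,q)$-component of $H(dv)$. Then $v-w$ should be $d$-closed, while $||w||\leq C(||\partial v||+||\bar{\partial}v||)$. This gives the lower bound for $T$ on $(ker\,T)^\perp$ by the standard closed-range argument.

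If the bidegree bookkeeping fails, the fallback is to combine the $L^2$ estimates of Hörmander for $\bar{\partial}$ and for $\partial$ (the latter by conjugation) on the pseudoconvex domain. One would then control the component of $f_2$ lying in $ker(\bar{\partial})$ by $\partial$, using the $\partial\bar{\partial}$-lemma of {\color{red}\cite{3}}.

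\emph{Step 4 (conclusion).} Combining Steps 2 and 3 gives
$$||f||^2=||f_1||^2+||f_2||^2\leq k\big(||(\partial\bar{\partial})^\ast f||^2+||\partial f||^2+||\bar{\partial}f||^2\big)=k<f,\tilde{\Delta}_{BC}f>.$$
Cauchy--Schwarz then yields $||f||\leq k||\tilde{\Delta}_{BC}f||$, and Lemma $4.1.1$ of {\color{red}\cite{1}} shows that $R(\tilde{\Delta}_{BC})$ is closed.
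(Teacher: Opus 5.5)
Your Steps 1, 2 and 4 are sound, granting the $L^2$ $\partial\bar{\partial}$-lemma you cite and taking $p,q\geq 1$. Step 1 is actually better than the paper's argument. The paper obtains $ker(\tilde{\Delta}_{BC})=\lbrace 0\rbrace$ from the claim $ker(\partial)\cap ker(\bar{\partial})=\lbrace 0\rbrace$, which is false ($dz_1\wedge d\bar{z}_1$ is a counterexample). Your pairing $\|\alpha\|^2=\langle u,(\partial\bar{\partial})^\ast\alpha\rangle=0$ is the correct way to use $d$-closedness.

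The genuine gap is Step 3, and it carries the whole closed-range assertion. Since $(ker(T))^\perp=R(\partial\bar{\partial})^\perp=ker((\partial\bar{\partial})^\ast)$, the quadratic form of $\tilde{\Delta}_{BC}$ on $Dom(T)\cap(ker(T))^\perp$ is just $\|\partial f\|^2+\|\bar{\partial}f\|^2$. So the closed-range claim is \emph{equivalent} to your estimate $\|f_2\|\leq k_2(\|\partial f_2\|+\|\bar{\partial}f_2\|)$. That estimate would follow if $R(\partial^\ast)+R(\bar{\partial}^\ast)=ker(\partial)^\perp+ker(\bar{\partial})^\perp$ were closed, i.e.\ if $ker(\partial)$ and $ker(\bar{\partial})$ met at a positive angle modulo their intersection. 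Hörmander's theory and its conjugate only give closedness of each summand separately, and that does not imply closedness of the sum. The paper's proof has exactly the same hole: it passes from ``$R(\partial\bar{\partial})$, $R(\partial^\ast)$, $R(\bar{\partial}^\ast)$ closed'' to ``$R(\tilde{\Delta}_{BC})$ closed'' with no argument.

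Neither of your suggested routes fills the gap.
\begin{itemize}
\item The radial homotopy contracts with the real Euler field $\sum_j(z_j\partial_{z_j}+\bar{z}_j\partial_{\bar{z}_j})$. Hence $H(dv)$ has components of types $(p+1,q-1)$, $(p,q)$ and $(p-1,q+1)$.
\item With $w=(H\,dv)^{p,q}$ you get $\partial(v-w)=\bar{\partial}(H\,dv)^{p+1,q-1}$ and $\bar{\partial}(v-w)=\partial(H\,dv)^{p-1,q+1}$, which are nonzero in general.
\item Concretely, for $v=z_2\,dz_1\wedge d\bar{z}_1$ in $\mathbb{C}^2$ one finds $(H\,dv)^{2,0}=\frac{1}{3}\bar{z}_1\,dz_2\wedge dz_1$ and $\partial(v-w)=\frac{1}{3}\partial v\neq 0$.
\item Separately, the unaveraged radial homotopy is not bounded on $L^2$ for forms of degree $\leq n$.
\end{itemize}

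Fixing the bidegrees would require solving, say, $\partial\gamma=(H\,dv)^{p+1,q-1}$ with the \emph{complementary} derivative $\bar{\partial}\gamma$ in $L^2$ and controlled by the data, so it can be absorbed into the $(p,q)$-component. $L^2$ existence theorems give no such control; the canonical solution gains only half a derivative at the boundary of a strictly pseudoconvex domain. Your fallback names the ingredients but does not supply this step.

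A more natural starting point is the closed range of $d$ in $L^2$: take $w$ to be the minimal primitive of $df$, so $f-w$ is $d$-closed with off-type components $O(\|df\|)$. But removing those components leads to the same correction problem. As it stands, the lower bound for $T=(\partial,\bar{\partial})$, and hence the closedness of $R(\tilde{\Delta}_{BC})$, is not proved.
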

\begin{proof}
We first prove that $ker(\tilde{\Delta}_{BC})=\lbrace 0 \rbrace. $\\
We have: $$ker(\tilde{\Delta}_{BC})=ker(\partial)\cap ker(\bar{\partial})\cap ker((\partial\bar{\partial})^\ast)$$ (Cf {\color{red}\cite{4}}. )\\
Since $\Omega$ is pseudoconvex we have from {\color{red}\cite{2}} $$ker(\partial)\cap ker(\bar{\partial})=\lbrace 0 \rbrace. $$
Thus: $$\lbrace 0 \rbrace\cap ker((\partial\bar{\partial})^\ast)= \lbrace 0 \rbrace .$$
Now let's show that $R(\tilde{\Delta}_{BC})$ is closed\\
We have $$L^2_{p,q}(\Omega)=\overline{R(\tilde{\Delta}_{BC})}\oplus ker(\tilde{\Delta}_{BC}). $$
Or $ker(\Delta_{BC})=\lbrace 0 \rbrace. $\\
And since $R(\tilde{\Delta}_{BC})$ is given by the following relationship from {\color{red}\cite{4}}, we have:
$$R(\tilde{\Delta}_{BC})=R(\partial\bar{\partial})\oplus(R(\partial^\ast)+R(\bar{\partial}^\ast)). $$
Let $f\in L^2_{p,q}(\Omega)$ such that $df=0. $\\
Using the same techniques as for the proposition $2$ of  {\color{red}\cite{3}}, there exists $u\in L^2_{p-1,q-1}(\Omega)$ such que $\partial\bar{\partial}u=f$ et $||u||\leq k||f||. $\\
So $R(\partial\bar{\partial})$ is closed by the lemma $4.1.1$ of  {\color{red}\cite{1}}.\\
According to the Theorem $4.3.4$ of  {\color{red}\cite{2}}, $R(\bar{\partial})$ is closed and $R(\bar{\partial})=ker(\bar{\partial}). $\\
For the same reasons, $R(\partial)$ is closed et $R(\partial)=ker(\partial). $\\
According to the Lemma $4.1.1$ of {\red\cite{1}},
$R(\partial^\ast)$ and $R(\bar{\partial}^\ast)$ are closed.\\
So $R(\tilde{\Delta}_{BC})$ is closed.
\end{proof}

\subsection{Main theorem}
We can now prove the main theorem of this section.
\begin{thm} \label{H}\item
\textnormal{Let $\Omega$ be a strictly  pseudoconvex bounded starred domain of  $\mathbb{C}^n$, $n\geq 2. $ For $1\leq p\leq n$ and $1\leq q\leq n, $ there exists an operator  $$\tilde{N}_{BC}:  L_{p,q}^{2}(\Omega)\longrightarrow L_{p,q}^{2}(\Omega)$$ verifying the following properties:}
\begin{itemize}\item[(1)]
$R(\tilde{N}_{BC})\subset Dom(\tilde{\Delta}_{BC}), $
\item[(2)]
$\tilde{N}_{BC}\tilde{\Delta}_{BC}=\tilde{\Delta}_{BC}\tilde{N}_{BC}=Id$ on $Dom(\tilde{\Delta}_{BC})$
\item[(3)]
For all $f\in L^2_{p,q}(\Omega),$ $f=\partial\bar{\partial}\tilde{N}_{BC}f\oplus((\partial^\ast)+(\bar{\partial}^\ast))\tilde{N}_{BC}f. $
\item[(4)]
For $f\in L^2_{p,q}(\Omega), $
\textnormal{there exists a constant $k\geq 0$ such que:}
\end{itemize}
\textnormal{
$$||\tilde{N}_{BC}f||\leq k||f|| \mbox  {  and  } ||\partial \tilde{N}_{BC}f||\leq \sqrt{k}||f||. $$
$$ ||\bar{\partial} \tilde{N}_{BC}f||\leq \sqrt{k}||f|| \mbox  {  and  }||(\partial\bar{\partial})^\ast \tilde{N}_{BC}f||\leq \sqrt{k}||f||. $$}
\end{thm}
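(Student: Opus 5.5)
The plan is to follow the template of the proofs of Theorem \eqref{P} and Theorem \eqref{G}, now with $\tilde{\Delta}_{BC}$ in place of $\square_{\partial\bar{\partial}}$.

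\emph{Step 1: construction of $\tilde{N}_{BC}$.} By Lemma \eqref{OL}, $ker(\tilde{\Delta}_{BC})=\lbrace 0\rbrace$ and $R(\tilde{\Delta}_{BC})$ is closed. Since $\tilde{\Delta}_{BC}$ is closed, densely-defined and self-adjoint, the Hodge decomposition then gives $L^2_{p,q}(\Omega)=R(\tilde{\Delta}_{BC})$. Hence $\tilde{\Delta}_{BC}:Dom(\tilde{\Delta}_{BC})\to L^2_{p,q}(\Omega)$ is a bijection. I define $\tilde{N}_{BC}$ as its inverse. Properties (1) and (2) are then immediate from the definition.

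\emph{Step 2: the a priori estimate.} By Lemma 4.1.1 of \cite{1}, closed range together with trivial kernel gives a constant $k$ such that
$$||f||\leq k||\tilde{\Delta}_{BC}f||, \qquad f\in Dom(\tilde{\Delta}_{BC}).$$
Applying this to $\tilde{N}_{BC}f$ gives $||\tilde{N}_{BC}f||\leq k||f||$.

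For the remaining bounds in (4), I compute, exactly as in the proof of Lemma \eqref{OL}'s companion lemma on $\tilde{\Delta}_{BC}$:
$$||(\partial\bar{\partial})^\ast\tilde{N}_{BC}f||^2+||\partial\tilde{N}_{BC}f||^2+||\bar{\partial}\tilde{N}_{BC}f||^2=<\tilde{\Delta}_{BC}\tilde{N}_{BC}f,\tilde{N}_{BC}f>=<f,\tilde{N}_{BC}f>.$$
By Cauchy--Schwarz and the first bound, the left side is at most $k||f||^2$. Each of the three terms is therefore bounded by $k||f||^2$, which yields the $\sqrt{k}$ estimates.

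\emph{Step 3: the decomposition (3).} I write $f=\tilde{\Delta}_{BC}\tilde{N}_{BC}f$ and expand:
$$f=\partial\bar{\partial}\big((\partial\bar{\partial})^\ast\tilde{N}_{BC}f\big)+\partial^\ast\big(\partial\tilde{N}_{BC}f\big)+\bar{\partial}^\ast\big(\bar{\partial}\tilde{N}_{BC}f\big).$$
The first term lies in $R(\partial\bar{\partial})$ and the other two lie in $R(\partial^\ast)+R(\bar{\partial}^\ast)$. These pieces are orthogonal: for $\langle\partial\bar{\partial}u,\partial^\ast v\rangle=\langle\partial\partial\bar{\partial}u,v\rangle=0$, and similarly $\langle\partial\bar{\partial}u,\bar\partial^\ast w\rangle=\langle\bar\partial\partial\bar\partial u,w\rangle=0$. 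This is consistent with the description $R(\tilde{\Delta}_{BC})=R(\partial\bar{\partial})\oplus(R(\partial^\ast)+R(\bar{\partial}^\ast))$ quoted from \cite{4}.

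Statement (3) as printed is a condensed form of this decomposition. I would prove it in the expanded form above and note that it is what the notation $\partial\bar{\partial}\tilde{N}_{BC}f\oplus((\partial^\ast)+(\bar{\partial}^\ast))\tilde{N}_{BC}f$ abbreviates, with the operators $(\partial\bar{\partial})^\ast$, $\partial$ and $\bar\partial$ understood inside.

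\emph{Main obstacle.} The delicate point is the domain bookkeeping in Step 3. One needs $(\partial\bar{\partial})^\ast\tilde{N}_{BC}f\in Dom(\partial\bar{\partial})$, $\partial\tilde{N}_{BC}f\in Dom(\partial^\ast)$ and $\bar\partial\tilde{N}_{BC}f\in Dom(\bar\partial^\ast)$. These memberships are exactly what membership in $Dom(\tilde{\Delta}_{BC})$ should encode, so I would fix that domain definition by analogy with the definition of $Dom(\square_{\partial\bar{\partial}})$. The orthogonality identities also require $\partial\bar\partial u\in Dom(\partial)\cap Dom(\bar\partial)$, which holds because $\partial\partial\bar\partial u=0$ and $\bar\partial\partial\bar\partial u=0$ in the sense of distributions.
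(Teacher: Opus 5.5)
Your Steps 1 and 2 are the paper's proof. $\tilde{N}_{BC}$ is the inverse of $\tilde{\Delta}_{BC}$, obtained from trivial kernel, closed range and the Hodge decomposition. The bound $\|\tilde{N}_{BC}f\|\le k\|f\|$ comes from $\|g\|\le k\|\tilde{\Delta}_{BC}g\|$ applied to $g=\tilde{N}_{BC}f$, and the $\sqrt{k}$ bounds come from the energy identity plus Cauchy--Schwarz.

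For (3) your route differs and is more careful. The paper simply substitutes $\tilde{N}_{BC}f$ into the range decomposition $R(\tilde{\Delta}_{BC})=R(\partial\bar{\partial})\oplus(R(\partial^\ast)+R(\bar{\partial}^\ast))$ borrowed from the Bott--Chern reference, which yields the printed formula. But there $\partial\bar{\partial}\tilde{N}_{BC}f$, $\partial^\ast\tilde{N}_{BC}f$ and $\bar{\partial}^\ast\tilde{N}_{BC}f$ have bidegrees $(p+1,q+1)$, $(p-1,q)$ and $(p,q-1)$, so (3) cannot hold as written. You instead expand $f=\tilde{\Delta}_{BC}\tilde{N}_{BC}f$ and verify directly from $\partial^2=\bar{\partial}^2=0$ that $R(\partial\bar{\partial})$ is orthogonal to $R(\partial^\ast)+R(\bar{\partial}^\ast)$. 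This gives the natural corrected form of (3) and proves what the paper only cites.

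What needs correcting is your resolution of the ``main obstacle''. Suppose $Dom(\tilde{\Delta}_{BC})$ is defined by the separate conditions $(\partial\bar{\partial})^\ast f\in Dom(\partial\bar{\partial})$, $\partial f\in Dom(\partial^\ast)$, $\bar{\partial}f\in Dom(\bar{\partial}^\ast)$. Then the operator is symmetric, which is all the paper's lemma actually checks, but in general it is not self-adjoint, and Step 1 breaks down.

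To see this, let $T=(\partial,\bar{\partial})$ and $Sf=((\partial\bar{\partial})^\ast f,\partial f,\bar{\partial}f)$ on $Dom((\partial\bar{\partial})^\ast)\cap Dom(\partial)\cap Dom(\bar{\partial})$. The self-adjoint operator $S^\ast S$ associated with the quadratic form $\|(\partial\bar{\partial})^\ast f\|^2+\|\partial f\|^2+\|\bar{\partial}f\|^2$ only requires $(\partial f,\bar{\partial}f)\in Dom(T^\ast)$ jointly. Your operator is a proper restriction of it. In a local model with defining function $\rho=-y_n$, take the $(1,1)$-form $f=F\,dz_1\wedge d\bar{z}_1$. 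The joint boundary condition is the Neumann condition $\partial F/\partial y_n=0$, whereas the separate conditions also force the tangential derivative $\partial F/\partial x_n$ to vanish on the boundary.

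A proper restriction of a self-adjoint operator is never self-adjoint. Since $S^\ast S$ is injective, a proper restriction of it also has strictly smaller range. So if $S^\ast S$ is onto, your operator is not, and $\tilde{N}_{BC}$ would not be defined on all of $L^2_{p,q}(\Omega)$.

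The repair is simple. Take $Dom(\tilde{\Delta}_{BC})=Dom(S^\ast S)$, so that $\tilde{\Delta}_{BC}=\partial\bar{\partial}(\partial\bar{\partial})^\ast+T^\ast T$. The first term splits off because $(\ker T)^\perp\subset R(\partial\bar{\partial})^\perp=\ker(\partial\bar{\partial})^\ast$. State (3) as $f=\partial\bar{\partial}(\partial\bar{\partial})^\ast\tilde{N}_{BC}f\oplus T^\ast T\tilde{N}_{BC}f$. Orthogonality then follows from $R(\partial\bar{\partial})\subset\ker\partial\cap\ker\bar{\partial}=\ker T\perp R(T^\ast)$. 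Step 2 is unchanged because $\langle\tilde{\Delta}_{BC}g,g\rangle=\|Sg\|^2$.
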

\begin{proof}
$$\tilde{\Delta}_{BC}:Dom(\tilde{\Delta}_{BC})\subset L^2_{p,q}(\Omega)\longrightarrow L^2_{p,q}(\Omega). $$
By definition, the operator $\tilde{N}_{ BC}$ is the inverse of the operator $\tilde{\Delta}_{BC}$ with $$\tilde{N}_{BC}: L^2_{p,q}(\Omega)\longrightarrow Dom(\tilde{\Delta}_{BC})\subset L^2_{p,q}(\Omega). $$
Therefore the properties  $(1)$ and $(2)$  are verified i.e $R(\tilde{N}_{BC})\subset Dom(\tilde{\Delta}_{BC})$ and $\tilde{N}_{BC}\tilde{\Delta}_{BC}=\tilde{\Delta}_{BC}\tilde{N}_{BC}=Id$ on $Dom(\tilde{\Delta}_{BC}). $\\
Let's show the property $(3)$ i.e for all $f\in L^2_{p,q}(\Omega),$ $f=\partial\bar{\partial}\tilde{N}_{BC}f\oplus((\partial^\ast)+(\bar{\partial}^\ast))\tilde{N}_{BC}f. $\\
Since $ker(\tilde{\Delta}_{BC})=\lbrace 0\rbrace $ and $R(\tilde{\Delta}_{BC})$ is closed, the Hodge decomposition gives us the following relation:
\begin{align*}
L^2_{p,q}(\Omega)&=R(\tilde{\Delta}_{BC})\\
&=\partial\bar{\partial}Dom(\tilde{\Delta}_{BC})\oplus((\partial^\ast)+(\bar{\partial}^\ast))Dom(\tilde{\Delta}_{BC}). 
\end{align*}
Therefore $\forall\;\;f\in L^2_{p,q}(\Omega)$, we have:
$$f=\partial\bar{\partial}\tilde{N}_{BC}f\oplus((\partial^\ast)+(\bar{\partial}^\ast))\tilde{N}_{BC}f. $$
Now let's show the property $4$ i.e $$||\tilde{N}_{BC}f||\leq k||f|| \mbox  {  and  } ||\partial \tilde{N}_{BC}f||\leq \sqrt{k}||f||. $$
$$ ||\bar{\partial} \tilde{N}_{BC}f||\leq \sqrt{k}||f|| \mbox  {  and  }||(\partial\bar{\partial})^\ast \tilde{N}_{BC}f||\leq \sqrt{k}||f||. $$
Since $R(\tilde{\Delta}_{BC})$ is closed we have: 
\begin{equation}\label{mum}
||f||\leq k||\tilde{\Delta}_{BC}f||. 
\end{equation}
Since $$R(\tilde{N}_{BC})\subset Dom(\tilde{\Delta}_{BC}), $$ then $$||\tilde{N}_{BC}f||\leq k||\tilde{\Delta}_{BC}\tilde{N}_{BC}f||. $$
So $$||\tilde{N}_{BC}f||\leq k||f||. $$
On the other hand,
\begin{align*}
A&=||(\partial\bar{\partial})^\ast\tilde{N}_{BC}f||^2+||\partial \tilde{N}_{BC}f||^2+||\bar{\partial}\tilde{N}_{BC}f||^2\\
&=<(\partial\bar{\partial})^\ast \tilde{N}_{BC}f,(\partial\bar{\partial})^\ast \tilde{N}_{BC}f>+<\partial \tilde{N}_{BC}f,\partial \tilde{N}_{BC}f>+<\bar{\partial}\tilde{N}_{BC}f,\bar{\partial}\tilde{N}_{BC}f>\\
&=< \tilde{N}_{BC}f,\partial\bar{\partial}(\partial\bar{\partial})^\ast \tilde{N}_{BC}f>+< \tilde{N}_{BC}f,\partial^\ast\partial \tilde{N}_{BC}f>+<\tilde{N}_{BC}f,\bar{\partial}^\ast\bar{\partial}\tilde{N}_{BC}f>\\
&= < \tilde{N}_{BC}f,(\partial\bar{\partial}(\partial\bar{\partial})^\ast +\partial^\ast\partial +\bar{\partial}^\ast\bar{\partial})\tilde{N}_{BC}f>\\
&=<\tilde{N}_{BC}f, f>
\end{align*}
So \\
$||(\partial\bar{\partial})^\ast \tilde{N}_{BC}f||^2+||\partial \tilde{N}_{BC}f||^2+||\bar{\partial}\tilde{N}_{BC}f||^2=<\tilde{N}_{BC}f, f>. $\\
Using  the Cauchy-Schawtz inequality, we have:
$$< \tilde{N}_{BC}f, f>\leq ||\tilde{N}_{BC}f|| ||f||. $$ Since $$||\tilde{N}_{BC}f||\leq k||f||. $$

So $$||(\partial\bar{\partial})^\ast \tilde{N}_{BC}f||^2+||\partial \tilde{N}_{BC}f||^2+||\bar{\partial}\tilde{N}_{BC}f||^2\leq k||f||^2. $$
Therefore
$$||(\partial\bar{\partial})^\ast \tilde{N}_{BC}f||^2\leq k||f||^2\mbox  {  and  } ||\partial \tilde{N}_{BC}f||^2\leq k||f||^2 \mbox  {  and  }||\bar{\partial}\tilde{N}_{BC}f||^2\leq k||f||^2. $$
So
$$||(\partial\bar{\partial})^\ast\tilde{N}_{BC}f||\leq \sqrt{ k}||f|| \mbox  {  and } ||\partial \tilde{N}_{BC}f||\leq \sqrt{ k}||f|| \mbox  {  and  }||\bar{\partial}\tilde{N}_{BC}f||\leq \sqrt{ k}||f||. $$
\end{proof}
\begin{remark}\item\label{II}
\textnormal{In our case, the Laplacian does not verify the following properties: }
\begin{enumerate}
\item[]
$\partial \tilde{N}_{p,q}= \tilde{N}_{p+1, q}\partial $ on $Dom(\partial)$\\
$\bar{\partial} \tilde{N}_{p,q}= \tilde{N}_{p,q+1}\bar{\partial}$ on $Dom(\bar{\partial})$\\
$\partial\bar{\partial}\tilde{N}_{p,q}= \tilde{N}_{p+1, q+1}\partial\bar{\partial}$ on $Dom(\partial\bar{\partial}). $\\
\end{enumerate}
\textnormal{This prevents us from obtaining the canonical solution.}
 \end{remark} 
\section{The inverse of  $\Delta_{BC}=\partial^\ast\partial+\bar{\partial}^\ast\bar{\partial}+(\partial\bar{\partial})(\partial\bar{\partial})^\ast+(\partial\bar{\partial})^\ast(\partial\bar{\partial})+(\partial^\ast\bar{\partial})^\ast(\partial^\ast\bar{\partial})+(\partial^\ast\bar{\partial})(\partial^\ast\bar{\partial})^\ast$ on $L^2_{p,q}(\Omega). $}
 Because of non-ellipticity, terms have been added to the laplacian $\tilde{\Delta}_{BC}$ to make it elliptic (Cf {\color{red}\cite{4}}).
In this section, we work with the elliptic Bott Chern Laplacian given by $$\Delta_{BC}=\partial^\ast\partial+\bar{\partial}^\ast\bar{\partial}+(\partial\bar{\partial})(\partial\bar{\partial})^\ast+(\partial\bar{\partial})^\ast(\partial\bar{\partial})+(\partial^\ast\bar{\partial})^\ast(\partial^\ast\bar{\partial})+(\partial^\ast\bar{\partial})(\partial^\ast\bar{\partial})^\ast. $$ 
\begin{remark}\item
\textnormal{The non-elliptic Bott Chern Laplacian given by $$\tilde{\Delta}_{BC}=(\partial\bar{\partial})(\partial\bar{\partial})^\ast+\partial^\ast\partial+\bar{\partial}^\ast\bar{\partial}$$ and the elliptic one given by $$\Delta_{BC}=\partial^\ast\partial+\bar{\partial}^\ast\bar{\partial}+(\partial\bar{\partial})(\partial\bar{\partial})^\ast+(\partial\bar{\partial})^\ast(\partial\bar{\partial})+(\partial^\ast\bar{\partial})^\ast(\partial^\ast\bar{\partial})+(\partial^\ast\bar{\partial})(\partial^\ast\bar{\partial})^\ast$$ have the same kernel(Cf {\color{red}\cite{4}}). }
\end{remark}
\subsection{Somes Lemmas}
Before proving the main theorem in this section, let's prove some  useful lemmas.
\begin{lemma}\item
\textnormal{The operator $\Delta_{BC}=\partial^\ast\partial+\bar{\partial}^\ast\bar{\partial}+(\partial\bar{\partial})(\partial\bar{\partial})^\ast+(\partial\bar{\partial})^\ast(\partial\bar{\partial})+(\partial^\ast\bar{\partial})^\ast(\partial^\ast\bar{\partial})+(\partial^\ast\bar{\partial})(\partial^\ast\bar{\partial})^\ast$ defined by $L^2_{p,q}(\Omega)\rightarrow L^2_{p,q}(\Omega)$ is closed, densely defined and self-adjoint.}
\end{lemma}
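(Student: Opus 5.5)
The plan is to follow the same three-step scheme the paper used for $\square_{\partial\bar{\partial}}$ and for $\tilde{\Delta}_{BC}$ (Lemma \ref{DS} and the lemma preceding Lemma \ref{OL}). We treat $\Delta_{BC}$ as the sum of six nonnegative pieces of the form $T^\ast T$, with $T$ running over the closed, densely defined operators
$$T\in\lbrace \partial,\ \bar{\partial},\ (\partial\bar{\partial})^\ast,\ \partial\bar{\partial},\ \partial^\ast\bar{\partial},\ (\partial^\ast\bar{\partial})^\ast\rbrace .$$
The domain $Dom(\Delta_{BC})$ is taken to be the set of $f$ lying in the domains of all six $T$ such that each $Tf$ lies in $Dom(T^\ast)$. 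Throughout, $T^{\ast\ast}=T$ is used for the closed operators $\partial\bar{\partial}$ and $\partial^\ast\bar{\partial}$.

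\emph{Closedness.} First I will compute, for $f\in Dom(\Delta_{BC})$,
$$<\Delta_{BC}f,f>=||\partial f||^2+||\bar{\partial}f||^2+||(\partial\bar{\partial})^\ast f||^2+||\partial\bar{\partial}f||^2+||\partial^\ast\bar{\partial}f||^2+||(\partial^\ast\bar{\partial})^\ast f||^2 .$$
Now take $f_n\to f$ with $\Delta_{BC}f_n$ convergent. Each of $\partial,\bar{\partial},\partial\bar{\partial}$ is closed, being continuous on distributions. The operators $(\partial\bar{\partial})^\ast$ and $(\partial^\ast\bar{\partial})^\ast$ are closed because adjoints are always closed, exactly as argued for $(\partial\bar{\partial})^\ast$ in the $\tilde{\Delta}_{BC}$ lemma. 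The operator $\partial^\ast\bar{\partial}$ is closed as the adjoint of its own adjoint; alternatively one can work with its closure. Following the previous lemmas, I then pass to the limit term by term in the six pieces, and conclude $\Delta_{BC}f_n\to\Delta_{BC}f$.

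\emph{Density and self-adjointness.} For density I will repeat the argument verbatim: compactly supported smooth forms satisfy $D^{p,q}(\Omega)\subset Dom(\Delta_{BC})$, since all six operators act on them and the adjoint conditions are trivially met for compact support. Hence $Dom(\Delta_{BC})$ is dense in $L^2_{p,q}(\Omega)$. For self-adjointness, take $u,v\in Dom(\Delta_{BC})$ and move each $T^\ast T$ across the inner product in two steps,
$$<T^\ast Tu,v>=<Tu,Tv>=<u,T^\ast Tv>,$$
and sum the six identities to get $<\Delta_{BC}u,v>=<u,\Delta_{BC}v>$.

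\emph{Main obstacle.} The hard part is bookkeeping of domains, in particular the mixed operator $\partial^\ast\bar{\partial}$. Its domain involves both $Dom(\bar{\partial})$ and the condition $\bar{\partial}f\in Dom(\partial^\ast)$, so one must check that the compactly supported forms and the limit $f$ satisfy these conditions. I would handle this by defining $\partial^\ast\bar{\partial}$ as the closure of its restriction to $D^{p,q}(\Omega)$; it is then automatically closed and densely defined, and all six terms fit the abstract $T^\ast T$ framework used above.

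A genuinely rigorous self-adjointness statement (as opposed to symmetry) would come from realizing $\Delta_{BC}$ as the operator associated with the closed nonnegative quadratic form given by the sum of the six squared norms. This is Friedrichs/Gaffney, and I would cite it if symmetry alone is judged insufficient.
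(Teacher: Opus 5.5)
\textbf{Verdict: genuine gap, shared with the paper.} Your outline follows the paper's proof step for step: the energy identity, closedness of the six constituent operators, term-by-term passage to the limit, the symmetry computation, and $D^{p,q}(\Omega)\subset Dom(\Delta_{BC})$. The term-by-term step does not work, and the paper's proof fails at the same point.

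\textbf{Closedness.} You state closedness correctly: $f_n\to f$ and $\Delta_{BC}f_n\to g$ must imply $f\in Dom(\Delta_{BC})$ and $\Delta_{BC}f=g$. (The paper instead assumes $f\in Dom$ and proves continuity.) But you can only use closedness of an individual $T$ once you know that $Tf_n$, resp.\ $T^\ast Tf_n$, converges. You only know that the \emph{sum} of the six terms converges, and a sum of closed operators need not be closed.

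Your energy identity does rescue the first level. Applied to $f_n-f_m$ it gives $\sum_T\|T(f_n-f_m)\|^2\le\|\Delta_{BC}(f_n-f_m)\|\,\|f_n-f_m\|\to 0$. Hence each $Tf_n\to Tf$ and $f\in\bigcap_T Dom(T)$.

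Nothing forces the six sequences $T^\ast Tf_n$ to converge separately, however. So you cannot conclude $Tf\in Dom(T^\ast)$ for each $T$, which your definition of $Dom(\Delta_{BC})$ requires. What the limit does give is $\sum_T\langle Tf,Th\rangle=\langle g,h\rangle$ for all $h\in\bigcap_T Dom(T)$. That places $f$ in the domain of the operator $A$ associated with the form $Q(f,h)=\sum_T\langle Tf,Th\rangle$, and that domain need not coincide with yours.

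A smaller point: ``$\partial^\ast\bar{\partial}$ is closed as the adjoint of its own adjoint'' is circular. $T^{\ast\ast}$ is the closure of $T$, and a composition of closed operators need not be closed. Replacing $\partial^\ast\bar{\partial}$ by its closure is fine, but it is a change of definition.

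\textbf{Self-adjointness.} The same issue defeats this step. Your computation only gives symmetry of $L=\sum_T T^\ast T$, and the Friedrichs fallback does not finish the job as stated. The representation theorem applies to $Q$: it is closed because each $T$ is closed, and densely defined because $D^{p,q}(\Omega)\subset\bigcap_T Dom(T)$. It produces a self-adjoint $A$ with $L\subset A$, not $L=A$.

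To get $L=A$ you must show the following. Whenever $h\mapsto Q(f,h)$ is $L^2$-bounded on $\bigcap_T Dom(T)$, each individual functional $h\mapsto\langle Tf,Th\rangle$ is bounded on $Dom(T)$. This is exactly the content of Gaffney's theorem for $\bar{\partial}\bar{\partial}^\ast+\bar{\partial}^\ast\bar{\partial}$. Its proof uses $\bar{\partial}^2=0$ to split test forms along $\ker\bar{\partial}\oplus(\ker\bar{\partial})^\perp$. For the six-term $\Delta_{BC}$ an analogous decoupling is needed, and neither you nor the paper provides one.

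\textbf{Possible repairs.} You can \emph{define} $\Delta_{BC}$ as the form operator $A$. Closedness, density of the domain and self-adjointness then all follow at once from the representation theorem, at the price of a domain different from the one you wrote. Otherwise you must prove $Dom(A)=Dom(L)$.
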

\begin{proof}
Montrons que $\Delta_{BC}$ is closed:\\
Soit $f_n\in Dom(\Delta_{BC})$, 
\begin{align*}
<\Delta_{BC}f_n, f_n>&=<(\partial^\ast\partial+\bar{\partial}^\ast\bar{\partial}+(\partial\bar{\partial})(\partial\bar{\partial})^\ast+(\partial\bar{\partial})^\ast(\partial\bar{\partial})+(\partial^\ast\bar{\partial})^\ast(\partial^\ast\bar{\partial})\\&+(\partial^\ast\bar{\partial})(\partial^\ast\bar{\partial})^\ast)f_n, f_n>\\
&=<\partial^\ast\partial f_n+\bar{\partial}^\ast\bar{\partial}f_n+(\partial\bar{\partial})(\partial\bar{\partial})^\ast f_n+(\partial\bar{\partial})^\ast(\partial\bar{\partial})f_n+(\partial^\ast\bar{\partial})^\ast(\partial^\ast\bar{\partial})f_n\\&+(\partial^\ast\bar{\partial})(\partial^\ast\bar{\partial})^\ast f_n, f_n>\\
&=<\partial^\ast\partial f_n, f_n>+<\bar{\partial}^\ast\bar{\partial}f_n, f_n>+<(\partial\bar{\partial})(\partial\bar{\partial})^\ast f_n, f_n>\\&+<(\partial\bar{\partial})^\ast(\partial\bar{\partial})f_n, f_n>+<(\partial^\ast\bar{\partial})^\ast(\partial^\ast\bar{\partial})f_n, f_n>+<(\partial^\ast\bar{\partial})(\partial^\ast\bar{\partial})^\ast f_n, f_n>\\
&=<\partial f_n, \partial f_n>+<\bar{\partial}f_n,\bar{\partial} f_n>+<(\partial\bar{\partial})^\ast f_n, (\partial\bar{\partial})^\ast f_n>\\&+<(\partial\bar{\partial})f_n, (\partial\bar{\partial})f_n>+<(\partial^\ast\bar{\partial})f_n, (\partial^\ast\bar{\partial})f_n>+<(\partial^\ast\bar{\partial})^\ast f_n, (\partial^\ast\bar{\partial})^\ast f_n>\\
&=||\partial f_n||^2+||\bar{\partial}f_n||^2+||(\partial\bar{\partial})^\ast f_n||^2+||\partial\bar{\partial}f_n||^2+||\partial^\ast\bar{\partial}f_n||^2+||(\partial^\ast\bar{\partial})^\ast f_n||^2
\end{align*}
$\partial$ et $\bar{\partial}$ are closed operators, let's show that $(\partial\bar{\partial})^\ast$ and $\partial\bar{\partial}$ are closed operators as are $\partial^\ast\bar{\partial}$ and $(\partial^\ast\bar{\partial})^\ast. $\\
In fact we have:\\ $<\partial\bar{\partial}f_n, g>=<f_n, (\partial\bar{\partial})^\ast g>\longrightarrow <f, (\partial\bar{\partial})^\ast g>=<\partial\bar{\partial}f, g>. $\\
Thus, $$<\partial\bar{\partial}f_n, g>\longrightarrow <\partial\bar{\partial}f, g> \forall\;\;g\in Dom(\partial\bar{\partial})^\ast\cap Dom(\partial\bar{\partial}). $$ 
So $$\partial\bar{\partial}f_n\longrightarrow \partial\bar{\partial}f. $$
In the same way, we show that $(\partial\bar{\partial})^\ast$ is a closed operator. \\
Let's show that $\partial^\ast\bar{\partial}$ is a closed operator.\\
Let $g\in Dom(\partial^\ast\bar{\partial})^\ast\cap Dom(\partial^\ast\bar{\partial})$, we have:\\
$<\partial^\ast\bar{\partial}f_n,g>=<f_n, \bar{\partial}^\ast\partial g>\longrightarrow <f, \bar{\partial}^\ast\partial g>=<\partial^\ast\bar{\partial}f,g>. $\\
In the same way, we show that $(\partial^\ast\bar{\partial})^\ast$ is a closed operator. \\
Since $\partial$ is a closed operator, 
$$\partial f_n\longrightarrow \partial f. $$
And  $\partial^\ast$ is also a closed operator, we have:
$$\partial^\ast\partial f_n\longrightarrow \partial^\ast\partial f. $$
Since $\bar{\partial}$ is a closed operator, we have $$\bar{\partial}f_n\longrightarrow \bar{\partial}f. $$
And $\bar{\partial}^\ast$ is also a closed operator, we have:
$$\bar{\partial}^\ast\bar{\partial}f_n\longrightarrow \bar{\partial}^\ast\bar{\partial}f. $$
Since $\partial\bar{\partial}^\ast$ is a closed operator, $$\partial\bar{\partial}^\ast f_n\longrightarrow\partial\bar{\partial}^\ast f. $$
Therefore $$\partial\bar{\partial}(\partial\bar{\partial})^\ast f_n\longrightarrow \partial\bar{\partial}(\partial\bar{\partial})^\ast f. $$
In the same way $$\partial\bar{\partial}f_n\longrightarrow \partial\bar{\partial}f. $$
So $$(\partial\bar{\partial})^\ast\partial\bar{\partial}f_n\longrightarrow (\partial\bar{\partial})^\ast\partial\bar{\partial}f. $$
For the same reasons: $$\partial^\ast\bar{\partial}f_n\longrightarrow \partial^\ast\bar{\partial}f. $$
Thus, $$(\partial^\ast\bar{\partial})^\ast\partial^\ast\bar{\partial} f_n\longrightarrow (\partial^\ast\bar{\partial})^\ast\partial^\ast\bar{\partial} f. $$
In the same way $$(\partial^\ast\bar{\partial})^\ast f_n\longrightarrow (\partial^\ast\bar{\partial})^\ast f. $$
Thus, $$\partial^\ast\bar{\partial}(\partial^\ast\bar{\partial})^\ast f_n\longrightarrow \partial^\ast\bar{\partial}(\partial^\ast\bar{\partial})^\ast f. $$
So $$\partial^\ast\partial f_n+\bar{\partial}^\ast\bar{\partial}f_n+\partial\bar{\partial}(\partial\bar{\partial})^\ast f_n+(\partial\bar{\partial})^\ast\partial\bar{\partial}f_n+(\partial^\ast\bar{\partial})^\ast\partial^\ast\bar{\partial} f_n+\partial^\ast\bar{\partial}(\partial^\ast\bar{\partial})^\ast f_n\longrightarrow $$ $$ \partial^\ast\partial f+\bar{\partial}^\ast\bar{\partial}f+\partial\bar{\partial}(\partial\bar{\partial})^\ast f+(\partial\bar{\partial})^\ast\partial\bar{\partial}f+(\partial^\ast\bar{\partial})^\ast\partial^\ast\bar{\partial} f+\partial^\ast\bar{\partial}(\partial^\ast\bar{\partial})^\ast f. $$
Therefore $$\Delta_{BC}f_n\longrightarrow \Delta_{BC}f. $$
So $\Delta_{BC}$ is a closed operator.\\
\\
Now let's show that $\Delta_{BC}$ is self-adjoint.\\
If $u,\;\;v\in Dom(\Delta_{BC})$, do we have  $<\Delta_{BC}u, v>=<u,\Delta_{BC}v>?$
\begin{align*}
<\Delta_{BC}u, v>&=<(\partial^\ast\partial+\bar{\partial}^\ast\bar{\partial}+(\partial\bar{\partial})(\partial\bar{\partial})^\ast+(\partial\bar{\partial})^\ast(\partial\bar{\partial})+(\partial^\ast\bar{\partial})^\ast(\partial^\ast\bar{\partial})\\&+(\partial^\ast\bar{\partial})(\partial^\ast\bar{\partial})^\ast)u, v>\\
&=<\partial^\ast\partial u+\bar{\partial}^\ast\bar{\partial}u+(\partial\bar{\partial})(\partial\bar{\partial})^\ast u+(\partial\bar{\partial})^\ast(\partial\bar{\partial})u+(\partial^\ast\bar{\partial})^\ast(\partial^\ast\bar{\partial})u\\&+(\partial^\ast\bar{\partial})(\partial^\ast\bar{\partial})^\ast u, v>\\
&=<\partial^\ast\partial u, v>+<\bar{\partial}^\ast\bar{\partial}u, v>+<(\partial\bar{\partial})(\partial\bar{\partial})^\ast u, v>\\&+<(\partial\bar{\partial})^\ast(\partial\bar{\partial})u, v>+<(\partial^\ast\bar{\partial})^\ast(\partial^\ast\bar{\partial})u, v>+<(\partial^\ast\bar{\partial})(\partial^\ast\bar{\partial})^\ast u, v>\\
&=<\partial u, \partial v>+<\bar{\partial}u,\bar{\partial} v>+<(\partial\bar{\partial})^\ast u, (\partial\bar{\partial})^\ast v>\\&+<(\partial\bar{\partial})u, (\partial\bar{\partial})v>+<(\partial^\ast\bar{\partial})u, (\partial^\ast\bar{\partial})v>+<(\partial^\ast\bar{\partial})^\ast u, (\partial^\ast\bar{\partial})^\ast v>\\
&=< u, \partial^\ast\partial v>+<u,\bar{\partial}^\ast\bar{\partial} v>+< u,\partial\bar{\partial} (\partial\bar{\partial})^\ast v>\\&+<u,(\partial\bar{\partial})^\ast (\partial\bar{\partial})v>+<u, (\partial^\ast\bar{\partial})^\ast(\partial^\ast\bar{\partial})v>+<u,(\partial^\ast\bar{\partial}) (\partial^\ast\bar{\partial})^\ast v>\\
&=<u, (\partial^\ast\partial+\bar{\partial}^\ast\bar{\partial}+\partial\bar{\partial} (\partial\bar{\partial})^\ast +(\partial\bar{\partial})^\ast (\partial\bar{\partial})+ (\partial^\ast\bar{\partial})^\ast(\partial^\ast\bar{\partial})+(\partial^\ast\bar{\partial}) (\partial^\ast\bar{\partial})^\ast) v>\\
&=<u,\Delta_{BC}v>.
\end{align*}
So $\Delta_{BC}$ is self-adjoint.\\
Let's show that  $\Delta_{BC}$ is densely-defined.\\
Let $D^{p,q}(\Omega)$ be the space of $(p,q)$  forms with compact support.\\ 
We have the following inclusions:
$$D^{p,q}(\Omega)\subset Dom(\Delta_{BC})\subset L^2_{p,q}(\Omega). $$
And $$\overline{D^{p,q}(\Omega)}\subset\overline{Dom(\Delta_{BC})}\subset\overline{L^2_{p,q}(\Omega)}. $$
Now $L^2_{p,q}(\Omega)$ is a closed, $L^2_{p,q}(\Omega)=\overline{L^2_{p,q}(\Omega)}$ and $D^{p,q}(\Omega)$ is dense in  $L^2_{p,q}(\Omega)$ so \\$\overline{D^{p,q}(\Omega)}=L^2_{p,q}(\Omega). $
Thus we have: $$L^2_{p,q}(\Omega)\subset\overline{Dom(\Delta_{BC}})\subset L^2_{p,q}(\Omega). $$
Hence $$\overline{Dom(\Delta_{BC}})=L^2_{p,q}(\Omega). $$
So $\Delta_{BC}$ is densely-defined.
\end{proof}
\begin{remark}\item
\textnormal{According to the Hodge decomposition in Hilbert spaces, we have: $$L^2_{p,q}(\Omega)=\overline{R(\Delta_{BC})}\oplus ker(\Delta_{BC}). $$}
\end{remark}
\begin{lemme}\item\
\textnormal{Let $\Omega$ be a  strictly pseudoconvex   bounded starred domain of $\mathbb{C}^n. $ $R(\Delta_{BC})$ is closed and $ker(\Delta_{BC})=\lbrace 0 \rbrace. $}
\end{lemme}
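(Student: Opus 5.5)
The plan is to mirror the proof of Lemma \ref{OL} for $\tilde{\Delta}_{BC}$, using that the elliptic and non-elliptic Bott--Chern Laplacians share the same kernel. We also use that the range of $\Delta_{BC}$ splits into ranges of first-order and $\partial\bar{\partial}$-type operators, each of which is closed on a strictly pseudoconvex bounded starred domain.

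\textbf{Step 1: the kernel.} By the remark preceding this subsection, $ker(\Delta_{BC})=ker(\tilde{\Delta}_{BC})$. Lemma \ref{OL} already gives $ker(\tilde{\Delta}_{BC})=\lbrace 0\rbrace$, hence $ker(\Delta_{BC})=\lbrace 0\rbrace$.

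For a direct argument, pair $\Delta_{BC}f$ with $f$ exactly as in the closedness computation of the previous lemma. For $f\in Dom(\Delta_{BC})$ this gives
$$<\Delta_{BC}f,f>=||\partial f||^2+||\bar{\partial}f||^2+||(\partial\bar{\partial})^\ast f||^2+||\partial\bar{\partial}f||^2+||\partial^\ast\bar{\partial}f||^2+||(\partial^\ast\bar{\partial})^\ast f||^2 .$$
Hence $ker(\Delta_{BC})\subset ker(\partial)\cap ker(\bar{\partial})$. This intersection is $\lbrace 0\rbrace$ by the result of \cite{2} already invoked in Lemma \ref{OL}.

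\textbf{Step 2: the closed-range estimate.} Rather than describing $R(\Delta_{BC})$ explicitly, I would prove an a priori estimate
$$||f||\leq k\,||\Delta_{BC}f||, \qquad f\in Dom(\Delta_{BC}),$$
and then conclude closedness via Lemma $4.1.1$ of \cite{1}.

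From the identity in Step 1, it suffices to show
$$||f||^2\leq k\big(||\partial f||^2+||\bar{\partial}f||^2+||(\partial\bar{\partial})^\ast f||^2\big).$$
Cauchy--Schwarz then gives $||f||^2\leq k<\Delta_{BC}f,f>\leq k||\Delta_{BC}f||\,||f||$. This is precisely the same inequality that controls $\tilde{\Delta}_{BC}$, since the three extra terms of $\Delta_{BC}$ are nonnegative.

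To get it, I would use the closed ranges already established in the proof of Lemma \ref{OL}. There, $R(\partial)$ and $R(\bar{\partial})$ are closed with $R(\bar{\partial})=ker(\bar{\partial})$ and $R(\partial)=ker(\partial)$ (Theorem $4.3.4$ of \cite{2}). Likewise $R(\partial\bar{\partial})$ is closed, via Proposition $2$ of \cite{3} together with Lemma $4.1.1$ of \cite{1}.

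Each closed range gives an estimate on the orthogonal complement of the corresponding kernel:
$$||g||\leq C_1||\bar{\partial}g|| \quad \mbox{for } g\perp ker(\bar{\partial}),$$
and similarly for $\partial$. Decompose $f=f'+f''$ with $f'\in ker(\bar{\partial})$ and $f''\perp ker(\bar{\partial})$. Then $||f''||\leq C_1||\bar{\partial}f||$. The component $f'$ lies in $R(\bar{\partial})$, and I would control it using $\partial$ and $(\partial\bar{\partial})^\ast$.

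\textbf{Main obstacle.} The hardest point is this last estimate for $f'\in ker(\bar{\partial})$. I would first try to show that $f'$ splits further along $ker(\partial)$. The part of $f'$ orthogonal to $ker(\partial)$ is bounded by $||\partial f||$ plus the error from $f''$. The remaining part lies in $ker(\partial)\cap ker(\bar{\partial})$, which equals $R(\partial\bar{\partial})$ on the starred domain by the $\partial\bar{\partial}$-lemma of \cite{3}. On $R(\partial\bar{\partial})$, the closed range of $(\partial\bar{\partial})^\ast$ gives $||g||\leq C||(\partial\bar{\partial})^\ast g||$.

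Controlling the cross terms is where the work lies. The projections onto $ker(\partial)$ and $ker(\bar{\partial})$ do not commute with the operators, so domain issues must be tracked carefully. Once this is done, combining the three constants into $k=\max$ finishes the proof, exactly as for estimate \eqref{A}.
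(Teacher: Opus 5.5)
\textbf{Step 1.} Both steps have genuine gaps. Your ``direct argument'' uses $\ker(\partial)\cap\ker(\bar{\partial})=\{0\}$, and this is false. For $1\leq p,q\leq n$ the constant-coefficient form $dz_1\wedge\cdots\wedge dz_p\wedge d\bar{z}_1\wedge\cdots\wedge d\bar{z}_q$ lies in $L^2_{p,q}(\Omega)$ (as $\Omega$ is bounded) and is both $\partial$- and $\bar{\partial}$-closed. In fact $\ker(\partial)\cap\ker(\bar{\partial})$ is the space of all $d$-closed $(p,q)$-forms, and in your last paragraph you yourself identify it with $R(\partial\bar{\partial})$, which contradicts Step 1. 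Pseudoconvexity gives vanishing of cohomology ($\ker\bar{\partial}=R(\bar{\partial})$), not vanishing of kernels. Falling back on the lemma for $\tilde{\Delta}_{BC}$ does not help, since its kernel argument rests on the same claim. The kernel statement should be proved as follows. Your energy identity gives $\ker(\Delta_{BC})\subset\ker(\partial)\cap\ker(\bar{\partial})\cap\ker((\partial\bar{\partial})^\ast)$, with $\ker((\partial\bar{\partial})^\ast)=R(\partial\bar{\partial})^\perp$. The $L^2$ $\partial\bar{\partial}$-lemma on starred domains (Proposition $2$ of \cite{3}, as used in the paper) gives $\ker(\partial)\cap\ker(\bar{\partial})=R(\partial\bar{\partial})$ for $p,q\geq 1$. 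Hence $\ker(\Delta_{BC})\subset R(\partial\bar{\partial})\cap R(\partial\bar{\partial})^\perp=\{0\}$. The restriction $p,q\geq 1$ is essential: constants (bidegree $(0,0)$) and $dz_1$ (bidegree $(1,0)$) lie in the kernel.

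\textbf{Step 2.} Reducing to $\|f\|^2\leq k(\|\partial f\|^2+\|\bar{\partial}f\|^2+\|(\partial\bar{\partial})^\ast f\|^2)$ is the right target. But that estimate is the entire content of the closed-range claim, and you leave it open. Your splitting fails at two concrete points.

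First, $f''$ is the canonical solution $\bar{\partial}^\ast N(\bar{\partial}f)$, which gains only half a derivative up to the boundary. So nothing places $f''$ (hence $f'$) in $\mathrm{Dom}(\partial)$, and the ``error from $f''$'' is not bounded by $\|\partial f\|+\|\bar{\partial}f\|$. Second, the $\ker(\partial)$-component of $f'\in\ker(\bar{\partial})$ need not be $\bar{\partial}$-closed. It therefore need not land in $\ker(\partial)\cap\ker(\bar{\partial})=R(\partial\bar{\partial})$, and the $(\partial\bar{\partial})^\ast$-estimate cannot be applied to it.

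What is missing is closed range of the single operator $B=(\partial,\bar{\partial}):L^2_{p,q}\to L^2_{p+1,q}\oplus L^2_{p,q+1}$. That is, you need $\|f\|\lesssim\|\partial f\|+\|\bar{\partial}f\|$ for $f\in\mathrm{Dom}(B)$ orthogonal to $\ker(\partial)\cap\ker(\bar{\partial})$. Given the separate closed ranges, this follows if $\ker(\partial)$ and $\ker(\bar{\partial})$ meet at a positive angle. Equivalently, it follows if $R(\partial^\ast)+R(\bar{\partial}^\ast)=(\ker\partial)^\perp+(\ker\bar{\partial})^\perp$ is closed. No general Hilbert-space argument derives this from the closed ranges of $\partial$, $\bar{\partial}$, $\partial\bar{\partial}$ alone. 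The max-of-constants step behind \eqref{A} worked only because the two pieces there were a kernel and its orthogonal complement.

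Once $R(B)$ is known to be closed, use the Hilbert-complex argument for $L^2_{p-1,q-1}\xrightarrow{\partial\bar{\partial}}L^2_{p,q}\xrightarrow{B}L^2_{p+1,q}\oplus L^2_{p,q+1}$ with $\ker B=R(\partial\bar{\partial})$. It splits $f$ into $R(\partial\bar{\partial})$ and $(\ker B)^\perp$ and yields your estimate. The paper's route does not supply this step either: the lemma for $\tilde{\Delta}_{BC}$ concludes that $R(\partial^\ast)+R(\bar{\partial}^\ast)$ is closed because each summand is, which is precisely the unproved positive-angle condition.
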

\begin{proof}
The approach is identical to Lemma \eqref{OL} because $\Delta_{BC}$ and $\tilde{\Delta}_{BC}$ have the same kernel.
\end{proof}
\subsection{Main theorem}
The result of this part is as follows
\begin{thm} \label{E}\item
\textnormal{Let $\Omega$ be a strictly pseudoconvex   bounded starred domain of $\mathbb{C}^n$, $n\geq 2. $ For $1\leq p\leq n$ and $1\leq q\leq n, $ there is an operator  $$N_{BC}:  L_{p,q}^{2}(\Omega)\longrightarrow L_{p,q}^{2}(\Omega)$$ with the following properties:}
\begin{itemize}\item[(1)]
$R(N_{BC})\subset Dom(\Delta_{BC}), $
\item[(2)]
$N_{BC}\Delta_{BC}=\Delta_{BC}N_{BC}=Id$ on $Dom(\Delta_{BC})$
\item[(3)]
For all $f\in L^2_{p,q}(\Omega),$ $f=\partial\bar{\partial}N_{BC}f\oplus((\partial^\ast)+(\bar{\partial}^\ast))N_{BC}f. $
\item[(4)]
For $f\in L^2_{p,q}(\Omega), $
\textnormal{there is a constant $k\geq 0$ such that:}
\end{itemize}
\textnormal{
$$||N_{BC}f||\leq k||f|| \mbox  {  and }||\partial N_{BC}f||\leq \sqrt{k}||f||. $$
$$||\bar{\partial} N_{BC}f||\leq \sqrt{k}||f|| \mbox  {  and  }||\partial\bar{\partial}N_{BC}f||\leq \sqrt{k}||f||. $$
$$||(\partial\bar{\partial})^\ast N_{BC}f||\leq \sqrt{k}||f|| \mbox  {  and  }||\partial^\ast\bar{\partial}N_{BC}f||\leq \sqrt{k}||f||. $$
$$  ||(\partial^\ast\bar{\partial})^\ast N_{BC}f||\leq \sqrt{k}||f||. $$}
\end{thm}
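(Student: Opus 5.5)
The plan follows the proof of Theorem \eqref{H} word for word, with $\tilde{\Delta}_{BC}$ replaced by $\Delta_{BC}$. The two inputs are the two lemmas of this section. The first says that $\Delta_{BC}$ is closed, densely defined and self-adjoint. The second says that $R(\Delta_{BC})$ is closed and $ker(\Delta_{BC})=\lbrace 0\rbrace$; it rests on Lemma \eqref{OL} and on the equality of the kernels of $\Delta_{BC}$ and $\tilde{\Delta}_{BC}$.

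The first step is to construct the operator. By the Hodge decomposition remark, $L^2_{p,q}(\Omega)=\overline{R(\Delta_{BC})}\oplus ker(\Delta_{BC})$, and the second lemma reduces this to $L^2_{p,q}(\Omega)=R(\Delta_{BC})$. So $\Delta_{BC}:Dom(\Delta_{BC})\to L^2_{p,q}(\Omega)$ is a bijection. I will define $N_{BC}$ as its inverse, which gives (1) and (2) immediately.

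The second step is the a priori estimate. Because $R(\Delta_{BC})$ is closed and the kernel is trivial, Lemma $4.1.1$ of \cite{1} gives a constant $k$ with $\|f\|\le k\|\Delta_{BC}f\|$ for $f\in Dom(\Delta_{BC})$. Applying this to $N_{BC}f\in Dom(\Delta_{BC})$ yields $\|N_{BC}f\|\le k\|f\|$, so $N_{BC}$ is bounded.

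The third step gives the remaining bounds in (4). I will expand $\langle \Delta_{BC}N_{BC}f, N_{BC}f\rangle$ as in the closedness computation of the first lemma of this section:
$$\|\partial N_{BC}f\|^2+\|\bar{\partial}N_{BC}f\|^2+\|(\partial\bar{\partial})^\ast N_{BC}f\|^2+\|\partial\bar{\partial}N_{BC}f\|^2+\|\partial^\ast\bar{\partial}N_{BC}f\|^2+\|(\partial^\ast\bar{\partial})^\ast N_{BC}f\|^2=\langle f,N_{BC}f\rangle.$$
Cauchy--Schwarz and the previous step bound the right-hand side by $k\|f\|^2$. Since every term on the left is nonnegative, each one is at most $k\|f\|^2$, and taking square roots gives all seven inequalities of (4).

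For (3), I write $f=\Delta_{BC}N_{BC}f$ and split $\Delta_{BC}$ into its six summands. The summands $\partial\bar{\partial}(\partial\bar{\partial})^\ast N_{BC}f$ and $(\partial^\ast\bar{\partial})(\partial^\ast\bar{\partial})^\ast N_{BC}f$ lie in the ranges of $\partial\bar{\partial}$ and of $\partial^\ast$. The summands $\partial^\ast\partial N_{BC}f$, $(\partial\bar{\partial})^\ast\partial\bar{\partial}N_{BC}f$ and $(\partial^\ast\bar{\partial})^\ast\partial^\ast\bar{\partial}N_{BC}f=\bar{\partial}^\ast\partial\,\partial^\ast\bar{\partial}N_{BC}f$ lie in $R(\partial^\ast)+R(\bar{\partial}^\ast)$, and $\bar{\partial}^\ast\bar{\partial}N_{BC}f$ lies in $R(\bar{\partial}^\ast)$. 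This yields a decomposition of the shape $f\in R(\partial\bar{\partial})\oplus(R(\partial^\ast)+R(\bar{\partial}^\ast))$, matching the description of the range quoted from \cite{4} in Lemma \eqref{OL}. The orthogonality of the sum follows from $\langle\partial\bar{\partial}u,\partial^\ast v+\bar{\partial}^\ast w\rangle=\langle\partial\partial\bar{\partial}u,v\rangle+\langle\bar{\partial}\partial\bar{\partial}u,w\rangle=0$.

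The main obstacle is (3). Read literally, $\partial\bar{\partial}N_{BC}f$ and $(\partial^\ast+\bar{\partial}^\ast)N_{BC}f$ have the wrong bidegrees to add up to $f$. I will therefore interpret (3) in the sense in which it is proved in Theorem \eqref{H}: as the decomposition of $L^2_{p,q}(\Omega)=R(\Delta_{BC})$ into the images of $Dom(\Delta_{BC})$ under the $\partial\bar{\partial}$-part and the $(\partial^\ast,\bar{\partial}^\ast)$-part of the Laplacian. The justification is the term grouping above, together with the orthogonality of $R(\partial\bar{\partial})$ and $R(\partial^\ast)+R(\bar{\partial}^\ast)$.
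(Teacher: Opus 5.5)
Your proposal is correct relative to the two lemmas of this section, and it is essentially the paper's proof, which only defers to the argument for Theorem \ref{H}: invert $\Delta_{BC}$ using $\ker(\Delta_{BC})=\lbrace 0\rbrace$ and the closed range, bound $N_{BC}$ by the estimate from Lemma $4.1.1$ of \cite{1}, and read the six remaining bounds off the energy identity. Your reading of (3) as $f=\partial\bar{\partial}(\partial\bar{\partial})^\ast N_{BC}f\oplus(\text{the other five summands of }\Delta_{BC}N_{BC}f)$ correctly repairs a statement that is ill-typed as written, a defect the paper's argument for Theorem \ref{H} also has; to avoid the identity $(\partial^\ast\bar{\partial})^\ast=\bar{\partial}^\ast\partial$, which for unbounded operators holds only as the inclusion $(\partial^\ast\bar{\partial})^\ast\supseteq\bar{\partial}^\ast\partial$, check directly that those five summands are orthogonal to $\ker(\partial)\cap\ker(\bar{\partial})\supset R(\partial\bar{\partial})$, which places them in $\overline{R(\partial^\ast)+R(\bar{\partial}^\ast)}$.
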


\begin{proof}
The procedure for proving this Theorem \eqref{E} follows the same techniques as the Theorem \eqref{H}.
\end{proof}
\begin{remark}\item
\textnormal{The same problem raised with the remark \eqref{II} arises with the following properties.}
\begin{enumerate}
\item[]
$\partial N_{p,q}= N_{p+1, q}\partial $ on $Dom(\partial)$\\
$\bar{\partial} N_{p,q}= N_{p,q+1}\bar{\partial}$ on $Dom(\bar{\partial})$\\
$\partial\bar{\partial}N_{p,q}= N_{p+1, q+1}\partial\bar{\partial}$ on $Dom(\partial\bar{\partial}). $
\end{enumerate}
\end{remark}
\begin{remark}\item
\textnormal{This is a general remark for the Bott Chern Laplacian (elliptic and non-elliptic). Indeed, with this Laplacian, we have the bijection which allows the invertibility of this Laplacian and, moreover, the existence of the $\partial\bar{\partial}$-Neumann. However we do not obtain commutativity. This justifies the absence of a canonical solution. } 

\end{remark}

\begin{remark}\item
\textnormal{After obtaining the $\partial\bar{\partial}$-Neumann with the Laplacian associated with the $\partial\bar{\partial}$ noted $\square_{\partial\bar{\partial}}$ and that of Bott Chern noted $\Delta_{BC}, $ it would be interesting to look at the link between these different $\partial\bar{\partial}$-Neumann.}
\end{remark}

\end{document}